\documentclass[reqno,oneside,12pt]{amsart}

\addtolength{\hoffset}{-1cm}
\addtolength{\textwidth}{2cm}
\addtolength{\voffset}{-0.5cm}
\addtolength{\textheight}{1cm}
\setlength{\marginparwidth}{1.5cm}

\usepackage{amsmath,amsfonts,amssymb}
\usepackage{tikz}
\usepackage{tikz-cd}
\tikzset{node distance=1.5cm, auto}
\usepackage{color}
\definecolor{darkgreen}{rgb}{0,0.45,0}
\usepackage[pagebackref,colorlinks,citecolor=darkgreen,linkcolor=darkgreen]{hyperref}
\usepackage[capitalize]{cleveref}
\usepackage{stmaryrd}
\usepackage{enumerate}
\usepackage[shortlabels]{enumitem}
\usepackage{mathtools}

\addtolength{\hoffset}{-1cm}
\addtolength{\textwidth}{2cm}
\addtolength{\voffset}{-1.5cm}
\addtolength{\textheight}{2.4cm}

\usepackage{graphicx}
\usepackage{enumerate}
\usepackage[all]{xy}
\usepackage[latin9]{inputenc}

\newcommand{\End}{{\sf End}}

\newcommand{\Rep}{{\sf Rep}}

\newcommand{\PRep}{{\sf PRep}}


\newcommand{\x}{\otimes}




\def\eps{\epsilon}

\newtheorem{prop}{Proposition}[section]
\newtheorem{proposition}[prop]{Proposition}
\newtheorem{lemma}[prop]{Lemma} 
 
\newtheorem{corollary}[prop]{Corollary}

\newtheorem{theorem}[prop]{Theorem}

\theoremstyle{definition}

\newtheorem{defi}[prop]{Definition}
\newtheorem{definition}[prop]{Definition}
\newtheorem*{notation}{Notation}

\newtheorem{example}[prop]{Example}

\newtheorem{remark}[prop]{Remark}

\newcommand{\benu}{\begin{enumerate}}
\newcommand{\enu}{\end{enumerate}}

\newcommand{\beqna}{\begin{eqnarray}}
\newcommand{\eqna}{\end{eqnarray}}
\newcommand{\beqnast}{\begin{eqnarray*}}
\newcommand{\eqnast}{\end{eqnarray*}}
\newcommand{\beqn}{\begin{equation}}
\newcommand{\eqn}{\end{equation}}
\newcommand{\beqnst}{\begin{equation*}}
\newcommand{\eqnst}{\end{equation*}}

\usepackage{latexsym}

\usepackage{xspace}
\usepackage{amscd}
\usepackage{amssymb}
\usepackage{amsfonts}

\makeatother

\newcommand{\bema}{\left ( \begin{array}}
\newcommand{\ema}{\end{array} \right )}

\begin{document}
	
	\title[Partial representations of connected and smash product Hopf algebras]{Partial representations of connected and \\ smash product Hopf algebras}

	\author[T.\ Ferrazza]{Tiago Luiz Ferrazza}
	\address{\textup{(Ferrazza)} Departamento de Matem\'atica, Universidade Federal do Paran{\'a}, Brazil}
    \email{tiago.ferrazza@unespar.edu.br}

    \author[W.\ Hautekiet]{William Hautekiet}
	\address{\textup{(Hautekiet)} D\'epartement de Math\'ematiques, Universit\'e Libre de Bruxelles, Belgium}
	\email{william.hautekiet@ulb.be}

 \author[A.\ Neto]{Arthur Alves Neto}
	\address{\textup{(Neto)} Departamento de Matem\'atica, Universidade Federal do Paran{\'a}, Brazil}
    \email{arthurnt@ufpr.br}
	
	\thanks{\\ {\bf 2020 Mathematics Subject Classification}: 16T05, 16S40.\\   {\bf Key words and phrases:} partial representation, connected Hopf algebra, smash product} 
	
	\flushbottom
	
	\begin{abstract} 
		We show that every partial representation of a connected Hopf algebra is global. Some interesting classes of partial representations of smash product Hopf algebras are studied, and a description of the partial ``Hopf'' algebra if the first tensorand is connected is given. If \(H\) is cocommutative and has finitely many grouplikes, this allows to see \(H_{par}\) as the weak Hopf algebra coming from a Hopf category.
	\end{abstract}
	
	\maketitle
	
	\tableofcontents
	
	\section*{Introduction}
	\thispagestyle{empty} 
	
	Over the last two decades, partial representations of groups and Hopf algebras have been intensively studied. Partial actions of groups originated in the theory of \(C^*\)-algebras \cite{exel}, describing those \(C^*\)-algebras that have an action of the circle group, as generalized crossed products using partial automorphisms. The notion of partial representation of groups was introduced in \cite{exel2} and it was shown in \cite{DEP} that the partial representations of a finite group \(G\) correspond to representations of a groupoid \(\Gamma(G)\). This result was generalized in \cite{alves2015partial}, where partial representations of Hopf algebras were studied. The category of partial representations of a Hopf algebra \(H\) is isomorphic to the category of usual representations of a suitably constructed Hopf algebroid \(H_{par}\). One interesting feature of partial representations is that they are not only based on the algebraic properties of the Hopf algebra, but also on the coalgebraic properties. This indicates that studying partial representations can probe deeper in the structure of a Hopf algebra.

	However, until now, partial representations have only been explicitly described for 3 classes of Hopf algebras, which we list here in increasing order of complexity of their partial representation theory:
	\begin{enumerate}
	\item Universal enveloping algebras of Lie algebras. These Hopf algebras do not admit partiality \cite[Example 4.4]{alves2015partial}, i.\,e. every partial representation is a global (usual) representation and the partial ``Hopf'' algebra \(H_{par}\) is just the Hopf algebra itself.
	\item Finite group algebras. In this case the partial representations correspond to representations of a groupoid \cite{DEP}, hence the partial group algebra \(k_{par} G\) is a weak Hopf algebra.
	\item Sweedler's 4-dimensional Hopf algebra (and more generally, the Hopf-Ore extensions of \(kC_2\), as studied in \cite[Chapter 4]{arthurthesis}). The base algebra \(A_{par}\) of the Hopf algebroid \(H_{par}\) becomes infinite-dimensional in this case, which implies that the Hopf algebroid is not associated to a weak Hopf algebra.
	\end{enumerate}
	
	The aim of this paper is to develop tools to describe the partial representations for more general classes of Hopf algebras. First, we show that there exists a larger class of Hopf algebras that do not possess partiality: the connected Hopf algebras. More precisely, we show that if a partial representation of \(H\) is multiplicative on the coradical \(H_0\), then it is in fact a global representation. If \(H\) is connected (i.\,e. \(H_0\) is trivial), this shows that any partial representation of \(H\) is global. 
	This way we obtain new examples of Hopf algebras that do not have partiality, thus broadly extending the example of universal enveloping algebras. Conversely, we show that any Hopf algebra that has a nontrivial cosemisimple Hopf quotient admits at least one partial representation which is not global. This hypothesis is satisfied by large classes of Hopf algebras, such as cocommutative Hopf algebras in characteristic 0 which are not connected. Indeed, recall that if $H$ is a cocommutative Hopf algebra over a field of characteristic $0$, then by the famous Cartier-Gabriel-Konstanz-Milnor-Moore theorem (see for instance \cite[Theorem 5.6.5]{montgomery}), 
 $H$ is a smash product of a universal enveloping of a Lie algebra \(\mathfrak{g}_H\) and a group algebra \(kG_H\). From this one can easily deduce that the coradical of \(H\) is exactly \(kG_H\), and that the projection \(\pi : H \to kG_H\) is a Hopf algebra map. Hence there are always nontrivial partial representations if the group is nontrivial.
	This again indicates that the coalgebraic properties (such as the coradical) play an important role in understanding the partial representations of Hopf algebras.
	
	We aim to go a step further here, and fully describe the partial representations of cocommutative Hopf algebras (over an algebraically closed field of characteristic zero). This is what we will do in the second part of the paper, but we approach the problem in a more general setting. Following \cite{CIMZsmash}, we consider two Hopf algebras $U$ and $H$, together with a smash product map $R : H \otimes U \to U \otimes H$ turning $U\#_R H$ into a Hopf algebra (to have this, it suffices that \(R\) satisfies just three conditions; it has to be \textit{normal, multiplicative} and a \textit{coalgebra map}). Examples of such smash product Hopf algebras are obtained from actions of one Hopf algebra on the other, and by exact factorizations of groups. In \cref{se:main}, we study partial representations \(\pi\) of $U\#_R H$ that split as a product of a partial representation of \(U\) and a partial representation of \(H\), i.\,e.
	\begin{align*}
		\pi(u \x h) &= \pi(u \x 1_H) \pi(1_U \x h), \\
		\pi(R(h \x u)) &= \pi(1_U \x h)\pi(u \x 1_H), 
	\end{align*}
	for all \(u \in U, h \in H\). We show that these are equivalent to representations of a certain smash product of the partial ``Hopf'' algebras \(U_{par}\) and \(H_{par}\), with a smash product map \(\mathcal{R}\) induced by \(R\).
	
	The partial representations of \(U \#_R H\) that restrict to a global representation of \(U\) form an interesting subcategory of these; they remind of the partial representations of a group \(G\) that are global on a given subgroup as studied in \cite{DHSV}. We show that they are equivalent to the category of representations of \(U\#_{\mathcal{T}} H_{par}\), where \(\mathcal{T} : H_{par} \x U \to U \x H_{par}\) is again obtained from \(R\). This is of particular interest if \(U\) is a Hopf algebra without partiality, and allows to describe the partial ``Hopf'' algebra of any cocommutative Hopf algebra as a smash product: since \(U(\mathfrak{g})\) is connected, all of its partial representations are global, so any partial representation of \(U(\mathfrak{g}) \#_R kG\) restricts to a global representation of \(U(\mathfrak{g})\). We obtain that
	\[(U(\mathfrak{g}) \#_R kG)_{par} \cong U(\mathfrak{g}) \#_{\mathcal{T}} k_{par} G.\]
	If the number of grouplikes is finite, then \(k_{par} G\) is isomorphic to a groupoid algebra, and 
 \(U(\mathfrak{g}) \#_{\mathcal{T}} k_{par} G\) is in fact a weak Hopf algebra coming from a certain Hopf category. 
	
	This article is organized as follows. In \cref{se:prelim}, we recall the necessary notions and properties of partial representations and the coradical. Our first main result, that any partial representation of a connected Hopf algebra is global, is shown in \cref{se:connected}, and we give a sufficient condition for Hopf algebras to possess at least one partial representations which is not global. In \cref{se:smash}, we first recall the definition of a smash product algebra, and give the sufficient conditions for it to be a Hopf algebra. We also show some new technical properties there. Then we study partial representations of smash product Hopf algebras, with the description of \(H_{par}\) for cocommutative \(H\) as a main application. Some further examples to illustrate the results are given.

	\begin{notation}
		Throughout the article, \(H\) is a Hopf algebra over a field \(k\). For the comultiplication, we adopt the Sweedler notation
		\[\Delta(h) = h_{(1)} \otimes h_{(2)}.\]
		For vector spaces \(V\) and \(W,\) the tensor flip \(V \otimes W \to W \otimes V\) is denoted by \(\tau_{V, W}\).
	\end{notation}
	
	\section{Preliminaries}
	\label{se:prelim}
	
	In this section we will recall the basic notions about partial representations of Hopf algebras and the coradical filtration.
	
	\subsection{Partial representations of Hopf algebras}
	
	\begin{definition}[\cite{alves2015partial}]
		A partial representation of \(H\) on an algebra \(B\) is a linear map \(\pi : H \to B\) such that for all \(h, k \in H\)
		\begin{enumerate}[(PR1)]
			\item \(\pi(1_H) = 1_B\); \label{PR1}
			\item \(\pi(h) \pi(k_{(1)}) \pi(S(k_{(2)})) = \pi(hk_{(1)}) \pi(S(k_{(2)}))\); \label{PR2}
			\item \(\pi(h_{(1)}) \pi(S(h_{(2)})) \pi(k) = \pi(h_{(1)}) \pi(S(h_{(2)})k)\). \label{PR3}
		\end{enumerate}
		By \cite[Lemma 3.3]{ABCQVparcorep}, axioms \ref{PR2} and \ref{PR3} can be replaced with
		\begin{enumerate}[(PR1)]
			\setcounter{enumi}{3}
			\item \(\pi(h) \pi(S(k_{(1)})) \pi(k_{(2)}) = \pi(hS(k_{(1)})) \pi(k_{(2)})\); \label{PR4}
			\item \(\pi(S(h_{(1)})) \pi(h_{(2)}) \pi(k) = \pi(S(h_{(1)}))\pi(h_{(2)}k)\). \label{PR5}
		\end{enumerate}
	\end{definition}
	If \(B = \End_k(M)\) for some \(k\)-vector space \(M,\) then we call \(M\) a left partial \(H\)-module. 
	From the axioms it follows immediately that an algebra morphism (i.\,e.\ a usual representation) \(H \to B\) is a partial representation. We will often refer to those as \textit{global representations} of \(H\) on \(B\). 
	
	It turns out that the partial representations of \(H\) factor uniquely through the partial ``Hopf'' algebra \(H_{par}\). This algebra is constructed as the quotient of the tensor algebra \(T(H)\) by the relations
	\begin{gather}
		1_H = 1_{T(H)}; \label{eq:Hparrelation1} \\
		h \otimes k_{(1)} \otimes S(k_{(2)}) = hk_{(1)} \otimes S(k_{(2)}); \label{eq:Hparrelation2}\\
		h_{(1)} \otimes S(h_{(2)}) \otimes k = h_{(1)} \otimes S(h_{(2)})k; \label{eq:Hparrelation3}
	\end{gather}
	for all \(h, k \in H\). The class of \(h \in H\) in \(H_{par}\) is commonly denoted as \([h]\).
	It is easy to see that the linear map 
	\begin{equation}
		\label{eq:bracket}
		[-] : H \to H_{par} : h \mapsto [h]
	\end{equation}
	is a partial representation. In the other direction there is an algebra map
	\begin{equation}
		\label{eq:removing_brackets}
		H_{par} \to H : [h_1] \cdots [h_n] \mapsto h_1 \cdots h_n.
	\end{equation}
	
	The algebra \(H_{par}\) satisfies the following universal property.
	\begin{theorem}[{\cite[Theorem 4.2]{alves2015partial}}]
		\label{th:universal_property}
		For every partial representation \(\pi : H \to B\) there is a unique algebra morphism \(\hat{\pi} : H_{par} \to B\) such that \(\pi = \hat{\pi} \circ [-]\). Conversely, given an algebra morphism \(\varphi : H_{par} \to B,\) the map \(\varphi \circ [-] : H \to B\) is a partial representation.
	\end{theorem}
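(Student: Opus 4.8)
The plan is to invoke the freeness of the tensor algebra $T(H)$. Recall that $H_{par} = T(H)/I$, where $I$ is the two-sided ideal generated by $1_H - 1_{T(H)}$ together with the differences of the two sides of \eqref{eq:Hparrelation2} and \eqref{eq:Hparrelation3}, and that $[-] : H \to H_{par}$ is the composite of the inclusion $H \hookrightarrow T(H)$ of degree-one tensors with the quotient projection $T(H) \twoheadrightarrow H_{par}$.

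For the first assertion, let $\pi : H \to B$ be a partial representation. By the universal property of $T(H)$, the linear map $\pi$ extends uniquely to an algebra morphism $\tilde\pi : T(H) \to B$. The crucial step is to verify that $\tilde\pi$ annihilates the generators of $I$. The generator from \eqref{eq:Hparrelation1} maps to $\pi(1_H) - 1_B = 0$ by axiom (PR1). Since $\tilde\pi$ is linear — hence commutes with the implicit Sweedler summations — and multiplicative, the generator coming from \eqref{eq:Hparrelation2} maps to $\pi(h)\pi(k_{(1)})\pi(S(k_{(2)})) - \pi(hk_{(1)})\pi(S(k_{(2)}))$, which vanishes by (PR2); likewise the generator from \eqref{eq:Hparrelation3} vanishes by (PR3). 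As $\tilde\pi$ is an algebra map, it kills the whole ideal generated by these elements, so it descends to an algebra morphism $\hat\pi : H_{par} \to B$ with $\hat\pi([h]) = \tilde\pi(h) = \pi(h)$; that is, $\pi = \hat\pi \circ [-]$. Uniqueness is then automatic, since $T(H)$ is generated by $H$, so $H_{par}$ is generated as an algebra by the classes $[h]$, and any algebra morphism out of $H_{par}$ is determined by its values on them.

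Conversely, given an algebra morphism $\varphi : H_{par} \to B$, put $\pi := \varphi \circ [-]$. Axiom (PR1) holds because $[1_H] = 1_{H_{par}}$ by \eqref{eq:Hparrelation1} and $\varphi$ is unital. Axioms (PR2) and (PR3) are obtained by applying the multiplicative map $\varphi$ to the defining relations \eqref{eq:Hparrelation2} and \eqref{eq:Hparrelation3} of $H_{par}$; for instance,
\[\pi(h)\pi(k_{(1)})\pi(S(k_{(2)})) = \varphi\big([h][k_{(1)}][S(k_{(2)})]\big) = \varphi\big([hk_{(1)}][S(k_{(2)})]\big) = \pi(hk_{(1)})\pi(S(k_{(2)})).\]

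I do not expect a genuine obstacle here: this is the standard ``algebra presented by generators and relations'' argument. The only point deserving a moment of care is the bookkeeping with Sweedler notation, namely that applying the linear, multiplicative map $\tilde\pi$ to an element written with an implicit sum such as $h \otimes k_{(1)} \otimes S(k_{(2)})$ indeed yields the summed product $\pi(h)\pi(k_{(1)})\pi(S(k_{(2)}))$, which is immediate from linearity.
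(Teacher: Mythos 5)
Your proof is correct and is exactly the standard generators-and-relations argument; the paper does not reprove this statement but cites it from \cite[Theorem 4.2]{alves2015partial}, whose proof proceeds in the same way (extend $\pi$ to the free algebra $T(H)$, check the defining relations are killed using (PR1)--(PR3), and descend to the quotient, with uniqueness coming from $H_{par}$ being generated by the classes $[h]$). No gaps.
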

	This tells us that there is a bijective correspondence between partial representations of \(H\) and representations of \(H_{par}\), and that the category of left partial \(H\)-modules is isomorphic to the category of left \(H_{par}\)-modules.
	
	The structure of \(H_{par}\) can best be described with respect to a particular subalgebra \(A_{par}\), which is generated by the elements of the form
	\begin{equation}
		\label{eq:epsilonh}
		\varepsilon_h = [h_{(1)}] [S(h_{(2)})] \qquad \text{for } h \in H.
	\end{equation}
	As was shown in \cite[Section 4.4]{alves2015partial}, this algebra can also be constructed as a quotient of the tensor algebra \(T(H)\). Its defining relations are
	\begin{gather}
		1_{A_{par}} = \varepsilon_{1_H}; \label{eq:Arelation1} \\
		\varepsilon_h = \varepsilon_{h_{(1)}}  \varepsilon_{h_{(2)}}; \label{eq:Arelation2} \\
		\varepsilon_{h_{(1)}} \varepsilon_{h_{(2)}k} = \varepsilon_{h_{(1)}k} \varepsilon_{h_{(2)}}; \label{eq:Arelation3}
	\end{gather}
	for all \(h, k \in H\).

\begin{definition}[\cite{alves2015partial}] A \textit{left partial action} of a Hopf algebra $H$ on an algebra $A$ is a linear map \(H \otimes A \to A\), given by $h \otimes a \mapsto h \cdot a$, such that
\begin{itemize}
    \item[(PA$1$)]\label{A} $1_H\cdot a = a$,
    \item[(PA$2$)] $h\cdot (ab) = (h_{(1)} \cdot a)(h_{(2)} \cdot b)$,
    \item[(PA$3$)] $h\cdot (k\cdot a) = (h_{(1)} \cdot 1_A)(h_{(2)} k\cdot a)$,
\end{itemize}
for all $a, b\in A$ and $h,k \in H$. The algebra $A$ is called a \textit{partial left $H$-module algebra}. A left partial action is \textit{symmetric} if in addition, it satisfies
\begin{itemize}
    \item[(PA$4$)] $h\cdot (k\cdot a) = (h_{(1)} k\cdot a)(h_{(2)} \cdot 1_A)$, 
\end{itemize}
for all $h,k \in H$ and $a\in A$.

     Let $A$ be a partial $H$-module algebra. Consider the associative multiplication
     $$ (a\x h)(b\x k) = a(h_{(1)}\cdot b)\x h_{(2)} k $$
     on \(A \otimes H\). Then, the space $A\underline{\#} H = (A\x H)(1_A\x 1_H)$ is a unital algebra, which is generated by elements of the form
$$ a\# h = a(h_{(1)} \cdot 1_A)\x h_{(2)} $$
for \(a \in A\) and \(h \in H\).
This algebra is called the partial smash product of $A$ by $H$ \cite{CJpartialaction}.
\end{definition}
 
The following theorem summarizes the main results obtained in \cite{alves2015partial} about the structure of \(H_{par}\) with respect to its subalgebra \(A_{par}\).
	
	\begin{theorem}[{\cite[Theorems 4.8 and 4.10]{alves2015partial}}]
		\begin{enumerate}[(i)]
			\item There is a partial action of \(H\) on \(A_{par}\) defined by
			\begin{equation*}
				h \cdot a := [h_{(1)}] a [S(h_{(2)}]
			\end{equation*}
			for \(h \in H, a \in A\).
			\item The partial ``Hopf'' algebra \(H_{par}\) is isomorphic to the partial smash product algebra \(A_{par} \underline{\#} H\). The partial representation \eqref{eq:bracket} becomes
			\begin{equation}
				\label{eq:bracket_smash}
				H \to A_{par} \underline{\#} H : h \mapsto 1_{A_{par}} \# h.
			\end{equation}
        \item If \(H\) has invertible antipode, then $H_{par}$ has the structure of a Hopf algebroid over the subalgebra $A_{par}$.
		\end{enumerate}
	\end{theorem}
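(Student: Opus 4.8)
Since this result is quoted from \cite[Theorems 4.8 and 4.10]{alves2015partial}, the task is to outline how its proof runs. The whole argument rests on a handful of identities valid in \(H_{par}\) that are immediate consequences of \ref{PR1}--\ref{PR5}. The key one is
\[[h][k] = \varepsilon_{h_{(1)}}[h_{(2)}k] \qquad (h,k \in H):\]
expanding \(\varepsilon_{h_{(1)}} = [h_{(1)}][S(h_{(2)})]\) and applying \ref{PR3} gives \([h_{(1)}][S(h_{(2)})][h_{(3)}k] = [h_{(1)}][S(h_{(2)})h_{(3)}k]\), and since \(\sum h_{(1)} \otimes S(h_{(2)})h_{(3)} = h \otimes 1_H\) in \(H \otimes H\), the right-hand side collapses to \([h][k]\). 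The analogous manipulation with \ref{PR2} yields \([h]\varepsilon_k = [hk_{(1)}][S(k_{(2)})]\), and together these let one rewrite any word in the \([h]\)'s as an element of \(A_{par}\) times a single \([h]\); in particular \(H_{par}\) is generated as a left \(A_{par}\)-module by the \([h]\)'s.

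For (i): the operation \(h \cdot a := [h_{(1)}]a[S(h_{(2)})]\) preserves the subalgebra \(A_{par}\). Indeed one application of \ref{PR3} gives \([h_{(1)}][k_{(1)}][S(k_{(2)})][S(h_{(2)})] = [h_{(1)}][k_{(1)}][S(h_{(2)}k_{(2)})]\), and then the key identity rewrites \([h_{(1)}][k_{(1)}]\) to produce
\[h \cdot \varepsilon_k = \varepsilon_{h_{(1)}}\varepsilon_{h_{(2)}k} \in A_{par}.\]
Axioms (PA1)--(PA3) are then verified on the generators \(\varepsilon_k\): (PA1) is \ref{PR1}; (PA2) follows from the commutativity of \(A_{par}\) and the multiplicativity identities; and (PA3), after expanding both sides and applying the key identity to \([h_{(1)}][k_{(1)}]\), reduces to \ref{PR3} plus the antipode axiom.

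For (ii): by the universal property \thref{universal_property} it suffices, for an algebra map \(H_{par} \to A_{par}\underline{\#}H\), to check that \(h \mapsto 1_{A_{par}}\#h\) is a partial representation of \(H\) on \(A_{par}\underline{\#}H\) — a routine check from \((1_{A_{par}}\#h)(1_{A_{par}}\#k) = \varepsilon_{h_{(1)}}\#h_{(2)}k\) and the partial-action axioms. Conversely, using (i), the assignments \(\varepsilon_k\#1_H \mapsto \varepsilon_k\) and \(1_{A_{par}}\#h \mapsto [h]\) are compatible with the smash-product multiplication; the crucial instance is \((1_{A_{par}}\#h)(1_{A_{par}}\#k) = \varepsilon_{h_{(1)}}\#h_{(2)}k \mapsto \varepsilon_{h_{(1)}}[h_{(2)}k] = [h][k]\), which is precisely the key identity. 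So this extends to an algebra map \(A_{par}\underline{\#}H \to H_{par}\), and a check on generators shows the two maps are mutually inverse, the image of \([h]\) being \(1_{A_{par}}\#h\).

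For (iii): assuming \(S\) invertible, one equips \(H_{par} \cong A_{par}\underline{\#}H\) with a Hopf algebroid structure over \(A_{par}\) — source and target embeddings \(A_{par} \to H_{par}\) (the canonical one and a second one twisted by the antipode), a comultiplication determined by \([h] \mapsto [h_{(1)}] \otimes_{A_{par}} [h_{(2)}]\), a counit \([h] \mapsto \varepsilon_h\), and an antipode built from \([h] \mapsto [S(h)]\) — and then checks all the Hopf algebroid axioms by direct computation. Each structure map must first be shown well defined modulo the relations \eqref{eq:Hparrelation1}--\eqref{eq:Hparrelation3} and modulo the balanced tensor product over \(A_{par}\). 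I expect the bookkeeping in (iii), together with the well-definedness of the inverse map \(A_{par}\underline{\#}H \to H_{par}\) in (ii) — i.e.\ its compatibility with the twisted multiplication of the partial smash product — to be the main technical obstacle; invertibility of \(S\) is used exactly in constructing the antipode of \(H_{par}\).
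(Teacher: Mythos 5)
The paper gives no proof of this theorem --- it is quoted verbatim from \cite[Theorems 4.8 and 4.10]{alves2015partial} --- and your reconstruction faithfully follows the argument of that reference: the identity \([h][k]=\varepsilon_{h_{(1)}}[h_{(2)}k]\) and its mirror are exactly the lemmas used there to establish the partial action on \(A_{par}\), the isomorphism with the partial smash product via the universal property, and the Hopf algebroid structure. Your outline is correct and takes essentially the same route as the source.
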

	
	Let us look at some examples to illustrate how partial representations of different Hopf algebras can behave very differently. 
	
	\begin{example}
		\label{ex:C2}
		Let \(G = C_2 = \{1, g\}\) be the cyclic group of order two and \(H = kC_2\) its group algebra. From relation \eqref{eq:Hparrelation2} for \(h = k = g\) it follows that \([g]^3 = [g]\). Hence \(kC_{2, par}\) is 3-dimensional and has as a basis \(\{[1], [g], [g]^2\}\). Apart from the global representations, there is up to isomorphism just one simple partial representation, which is described by
		\[kC_2 \to k : \begin{cases}
			1 \mapsto 1, \\
			g \mapsto 0.
		\end{cases}\]
	\end{example}
	
	\begin{example}
		Consider \(H = kC_2^*,\) the Hopf algebra dual to the one from the previous example. If \(\mathrm{char}(k) \neq 2,\) then \(kC_2^* \cong kC_2\) as Hopf algebras, so their partial representation theory is the same. If \(\mathrm{char}(k) = 2\) however, then \(kC_2^*\) and \(kC_2\) are not isomorphic. Let \(\{p_1, p_g\}\) be the basis of \(kC_2\) dual to \(\{1, g\}\). Then
		\begin{align*}
			\Delta(p_1) &= p_1 \otimes p_1 + p_g \otimes p_g, \\
			\Delta(p_g) &= p_1 \otimes p_g + p_g \otimes p_1.
		\end{align*}
		Combining relations \eqref{eq:Arelation2} and \eqref{eq:Arelation3} one can show that \(\varepsilon_{p_g} = 0,\) so \([p_1] [p_g] = [p_g][p_1]\). Using \eqref{eq:Hparrelation2} for \(h = k = p_1,\) we find
		\[[p_1]^2 = [p_1]^3 + [p_1] [p_g]^2 = [p_1] ([p_1]^2 + [p_g]^2) = [p_1] ([p_1] + [p_g])^2 = [p_1]\]
		because \(p_1 + p_g = 1_{kC_2^*}\). In a similar way, \([p_g]^2 = [p_g]\) and \([p_1][p_g] = [p_g][p_1] = 0\). This shows that \(kC^*_{2, par} = kC_2^*\) and that every partial representation of \(kC_2^*\) is global if \(\mathrm{char}(k) = 2\). 
	\end{example}
	
	\begin{example}
		\label{ex:lie}
		Let \(\mathfrak{g}\) be a Lie algebra and \(U(\mathfrak{g})\) its universal enveloping algebra. Every partial representation of \(U(\mathfrak{g})\) is global, as is shown in \cite[Example 4.4]{alves2015partial}.
	\end{example}
	
	\begin{example}
		\label{ex:sweedler}
		Let \(H\) be Sweedler's 4-dimensional Hopf algebra over a field of characteristic different of 2 (see for instance \cite[Example 1.5.6]{montgomery}). 
		In \cite[Example 4.13]{alves2015partial}, a description of \(H_{par}\) was given, and in particular it was shown that the base algebra \(A_{par}\) of the Hopf algebroid \(H_{par}\) is infinite-dimensional and isomorphic to \(k[x, y]/(2x^2 - x, 2xz - z)\). This shows that \(H_{par}\) does not have the structure of a weak Hopf algebra, since those have a separable Frobenius (in particular finite-dimensional) base algebra. In \cite{arthurthesis}, \(H_{par}\) was described in another way:
		\[H_{par} \cong k[x, y]/(xy) \times H\]
		as algebras.
	\end{example}
	
	\subsection{The coradical filtration}

  In the next section we will show that for a connected Hopf algebra, any of its partial representations is in fact global. The proof crucially uses the so-called coradical filtration.
	
	\begin{definition}
		The coradical $H_0$ of \(H\) is the sum of all simple subcoalgebras of $H$. 
		For \(n \geq 1\), recursively define
		$$ H_n=\Delta^{-1}(H\x H_{n-1}+H_0\x H).$$
		The chain 
		\[H_0 \subseteq H_1 \subseteq \cdots \subseteq H_n \subseteq \cdots\]
		is called the coradical filtration of \(H\).
	\end{definition}

	\begin{lemma}[\cite{montgomery}]
		\label{le:coradical_filtration}
		\begin{enumerate}[(i)]
			\item For each \(n \geq 0\), \(H_n\) is a subcoalgebra of \(H\).
			\item The coradical filtration is a coalgebra filtration, i.\,e.\
			\[\Delta(H_n) \subseteq \sum_{i = 0}^n H_i \otimes H_{n - i}  \text{ for all } n, \quad \text{ and } \quad H = \bigcup_{n\geq 0}H_n. \]
		\end{enumerate}
	\end{lemma}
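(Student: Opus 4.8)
The plan is to forget about partial representations entirely and prove this as a statement about the coalgebra $H$, the engine being the \emph{wedge product}. For subspaces $X,Y\subseteq H$ set $X\wedge Y:=\Delta^{-1}(X\otimes H+H\otimes Y)=\Ker\big((\pi_X\otimes\pi_Y)\circ\Delta\big)$, where $\pi_X\colon H\to H/X$ is the canonical projection; by definition $H_n=H_0\wedge H_{n-1}$. First I would record the formal properties: since tensoring with a vector space over $k$ is exact, $H/(X\wedge Y)$ embeds into $(H/X)\otimes(H/Y)$, and a short argument using this together with coassociativity shows $\wedge$ is associative, both $(X\wedge Y)\wedge Z$ and $X\wedge(Y\wedge Z)$ being equal to $\Ker\big((\pi_X\otimes\pi_Y\otimes\pi_Z)(\Delta\otimes\id)\Delta\big)$. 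Moreover $X,Y\subseteq X\wedge Y$ whenever $X,Y$ are subcoalgebras. Consequently $H_n=H_0^{\wedge(n+1)}$ ($n+1$ factors), the coradical filtration is increasing, and more generally $H_i\wedge H_j=H_{i+j+1}$.

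For part (i) I would induct on $n$; the case $n=0$ is the definition of the coradical, and the inductive step reduces to the assertion that the wedge of two subcoalgebras is again a subcoalgebra. The cleanest route is to pass to the dual algebra $H^*$ with the convolution product: a subspace $V\subseteq H$ is a subcoalgebra iff $V^\perp$ is a two-sided ideal of $H^*$, one has $(X\wedge Y)^\perp=\overline{X^\perp Y^\perp}$ (the weak-$*$ closure of the product ideal), and since convolution is separately weak-$*$ continuous the closure of a product of ideals is an ideal; hence $X\wedge Y$, and therefore $H_n=H_0\wedge H_{n-1}$, is a subcoalgebra. (One can also argue directly in $H$, but the dual picture is shorter.)

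For the inclusion in part (ii), fix $n\geq 1$. Associativity of $\wedge$ gives $H_n=H_i\wedge H_{n-1-i}$ for every $0\le i\le n-1$, hence $\Delta(H_n)\subseteq H_i\otimes H+H\otimes H_{n-1-i}$ for each such $i$. Intersecting these inclusions with one another and with $\Delta(H_n)\subseteq H_n\otimes H_n$ (which holds by part (i)), I am left with $\Delta(H_n)\subseteq\big(\bigcap_{i=0}^{n-1}(H_i\otimes H+H\otimes H_{n-1-i})\big)\cap(H_n\otimes H_n)$, and a routine linear-algebra computation — pick a decomposition $H=V_0\oplus\dots\oplus V_n\oplus V_\infty$ with $H_j=V_0\oplus\dots\oplus V_j$ and check which blocks $V_a\otimes V_b$ survive all the conditions — identifies this intersection with $\sum_{i=0}^{n}H_i\otimes H_{n-i}$, as desired.

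The remaining point, exhaustiveness $H=\bigcup_n H_n$, is the only genuinely non-formal one. If $\dim_k H<\infty$ it is quick: $H^*$ is a finite-dimensional algebra, so its Jacobson radical $J$ satisfies $J^m=0$ for some $m$, and since $H_0=J^\perp$ and $(X\wedge Y)^\perp=X^\perp Y^\perp$ in this setting, $H_{m-1}^\perp=J^m=0$, i.e.\ $H_{m-1}=H$. For general $H$ I would invoke the Fundamental Theorem of Coalgebras: each $h\in H$ lies in a finite-dimensional subcoalgebra $D$; a simple subcoalgebra of $D$ is a simple subcoalgebra of $H$, so $D_0\subseteq H_0$, whence inductively $D_n\subseteq H_n$ for the coradical filtration of $D$, and by the finite-dimensional case $D=D_{m-1}$ for some $m$, so $h\in H_{m-1}$. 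I expect precisely this last step — reducing the infinite-dimensional case to the finite-dimensional one via the Fundamental Theorem of Coalgebras and nilpotence of $J(H^*)$ — to be the main obstacle; everything else is formal manipulation with coassociativity.
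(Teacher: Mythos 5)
The paper offers no proof of this lemma---it is quoted verbatim from Montgomery's book---and your argument via the wedge product, its associativity, the duality $(X\wedge Y)^\perp=\overline{X^\perp Y^\perp}$ between subcoalgebras and closed ideals of $H^*$, the block-decomposition computation for the filtration inclusion, and the reduction of exhaustiveness to the finite-dimensional case (nilpotence of $J(H^*)$ plus the fundamental theorem of coalgebras) is precisely the standard proof found in that reference (Theorem 5.2.2) and in Sweedler's Chapter 9. The proposal is correct and complete.
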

	
	\begin{definition}
		\begin{enumerate}[(i)]
			\item A Hopf algebra is \textit{pointed} if every simple subcoalgebra is one-dimensional, i.~e.~if the coradical is isomorphic to a group algebra.
			\item A Hopf algebra is \textit{connected} if \(H_0 = k1_H\).
		\end{enumerate}
	\end{definition}
	
	It is clear that any connected Hopf algebra is pointed. Group algebras are always pointed (since for those \(H = H_0 = kG\)). The universal enveloping algebras of Lie algebras from \cref{ex:lie} are connected. The Sweedler Hopf algebra is pointed but not connected: its coradical is \(\langle 1, g \rangle \cong kC_2\) and \(H_1 = H\).

\section{Measuring partiality by the coradical}
	\label{se:connected}
	
	In \cref{ex:lie} it was observed that universal enveloping algebras of Lie algebra do not possess any partiality, i.~e.~every partial representation of such a Hopf algebra is global. In this section we show that this is true for any connected Hopf algebra, and we formulate conditions to ensure the existence of partial representations.
	
	\subsection{Connected Hopf algebras have no partiality}
	
	\begin{lemma}
		\label{le:global_partial}
		Let $\pi:H \to B$ be a partial representation. Then \(\pi\) is a global representation (i.\,e.\ an algebra morphism) if and only if for all \(h \in H\) 
		\begin{equation}
			\label{eq:eps_trivial}
			\pi(h_{(1)})\pi(S(h_{(2)})) = \eps(h)1_B.
		\end{equation}
	\end{lemma}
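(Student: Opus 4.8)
The plan is to prove both implications directly from the axioms, using the identity \eqref{eq:eps_trivial} as the bridge between "partial" and "global". The forward direction is essentially immediate: if $\pi$ is an algebra morphism, then $\pi(h_{(1)})\pi(S(h_{(2)})) = \pi(h_{(1)}S(h_{(2)})) = \pi(\eps(h)1_H) = \eps(h)1_B$, using that $S$ is the convolution inverse of the identity and that $\pi$ is linear and unital. So the content is in the converse.

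For the converse, I would assume \eqref{eq:eps_trivial} and try to show $\pi(hk) = \pi(h)\pi(k)$ for all $h,k \in H$. The idea is to reconstruct $hk$ from $h$ and $k$ using the antipode axioms. Start from \ref{PR3} (or equivalently \ref{PR5}): $\pi(h_{(1)})\pi(S(h_{(2)}))\pi(k) = \pi(h_{(1)})\pi(S(h_{(2)})k)$. Now I would like to "collapse" the left-hand side. Apply \eqref{eq:eps_trivial} to $h_{(1)}$: but note \eqref{eq:eps_trivial} is a statement about $\pi(h_{(1)})\pi(S(h_{(2)}))$ with the \emph{same} $h$, so one has to be careful with the Sweedler indices. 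The clean way: rewrite $\pi(h)\pi(k)$ by inserting a trivial factor. Write
\[
\pi(h)\pi(k) = \pi(h_{(1)}\eps(h_{(2)}))\pi(k) = \pi(h_{(1)})\eps(h_{(2)})\pi(k) = \pi(h_{(1)})\,\eps(h_{(2)})1_B\,\pi(k),
\]
and now use \eqref{eq:eps_trivial} applied to $h_{(2)}$ to replace $\eps(h_{(2)})1_B$ by $\pi(h_{(2)(1)})\pi(S(h_{(2)(2)}))$. After reindexing via coassociativity this gives
\[
\pi(h)\pi(k) = \pi(h_{(1)})\pi(h_{(2)})\pi(S(h_{(3)}))\pi(k).
\]
Then apply \ref{PR5} to the last three factors with "$h$" $= h_{(2)}$... actually more directly, apply \ref{PR3}/\ref{PR5} to $\pi(h_{(2)})\pi(S(h_{(3)}))\pi(k)$ — here $h_{(2)}$ plays a single coalgebra element — to absorb $\pi(k)$: $\pi(h_{(2)(1)})\pi(S(h_{(2)(2)}))\pi(k) = \pi(h_{(2)(1)})\pi(S(h_{(2)(2)})k)$, i.e. $\pi(h_{(2)})\pi(S(h_{(3)}))\pi(k) = \pi(h_{(2)})\pi(S(h_{(3)})k)$. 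Substituting back,
\[
\pi(h)\pi(k) = \pi(h_{(1)})\pi(h_{(2)})\pi(S(h_{(3)})k).
\]
Now I use \eqref{eq:eps_trivial} again in the form $\pi(h_{(1)})\pi(S(h_{(2)})) = \eps(h)1_B$ — but I have $\pi(h_{(1)})\pi(h_{(2)})$, not $\pi(h_{(1)})\pi(S(h_{(2)}))$, so instead I should run the argument with $S(k)$ in place of $k$ combined with the antipode axioms \ref{PR2}/\ref{PR4}, or alternatively exploit that \eqref{eq:eps_trivial} together with \ref{PR4} also yields $\pi(h_{(1)})\pi(h_{(2)}) = \pi(h)$ after contracting with an $\eps$. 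Concretely: from \ref{PR4} with appropriate substitution and \eqref{eq:eps_trivial} one derives $\pi(h_{(1)})\pi(S(h_{(2)}))\pi(h_{(3)}) = \pi(h)$, and $\pi(S(h_{(2)}))\pi(h_{(3)})$ can be converted; the upshot I expect is a short chain showing $\pi(h_{(1)})\pi(h_{(2)}) = \pi(h)$. Granting that, the displayed line becomes $\pi(h)\pi(k) = \pi(h_{(1)})\pi(S(h_{(2)})k)$, which is exactly \ref{PR3} read from right to left applied to... wait, that still has a dangling $h_{(1)}$. The final collapse: apply \eqref{eq:eps_trivial}-type contraction once more, or observe $\pi(h_{(1)})\pi(S(h_{(2)})k) = \pi(h_{(1)})\pi(S(h_{(2)}))\pi(h_{(3)}k)$ by \ref{PR3}, and then $\pi(h_{(1)})\pi(S(h_{(2)})) = \eps(h_{(1)})1_B$ leaves $\eps(h_{(1)})\pi(h_{(2)}k) = \pi(hk)$. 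That closes the argument.

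The main obstacle is purely bookkeeping: keeping the Sweedler indices straight while juggling \eqref{eq:eps_trivial} (which ties $h_{(1)}$ to $S(h_{(2)})$) against the partial-representation axioms (which let one slide $\pi(k)$ in and out of expressions of the shape $\pi(h_{(1)})\pi(S(h_{(2)}))(\cdots)$). The conceptual point — that \eqref{eq:eps_trivial} says the "idempotent" $\eps_h$ acts trivially, and every place where partiality fails is governed precisely by these idempotents — is simple; the care is in choosing the order of substitutions so that at each stage one has a genuine $\pi(x_{(1)})\pi(S(x_{(2)}))$ pattern (single element $x$) rather than a spurious splitting. I would double-check the final chain by verifying each equality is an instance of exactly one of \ref{PR1}--\ref{PR5} or of \eqref{eq:eps_trivial}, and invoke \ref{PR4}/\ref{PR5} (the $S$-on-the-left versions) wherever the $S$-on-the-right versions \ref{PR2}/\ref{PR3} leave an antipode in an inconvenient spot.
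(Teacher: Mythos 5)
Your forward direction and the first half of your converse are fine: inserting $\eps(h_{(2)})1_B = \pi(h_{(2)})\pi(S(h_{(3)}))$ after $\pi(h_{(1)})$ and absorbing $\pi(k)$ via \ref{PR3} correctly yields $\pi(h)\pi(k) = \pi(h_{(1)})\pi(h_{(2)})\pi(S(h_{(3)})k)$. The gap is in how you propose to finish. The intermediate identity $\pi(h_{(1)})\pi(h_{(2)}) = \pi(h)$ that you hope to extract is simply false: it fails already for global representations (take $h$ grouplike, so that the left-hand side is $\pi(h)^2$), and since every global representation satisfies \eqref{eq:eps_trivial}, no combination of \ref{PR4} and \eqref{eq:eps_trivial} can produce it. Your alternative ``final collapse'' also misfires: by \ref{PR3} applied to the pair $h \otimes k$, the expression $\pi(h_{(1)})\pi(S(h_{(2)})k)$ equals $\pi(h_{(1)})\pi(S(h_{(2)}))\pi(k) = \eps(h)\pi(k)$, \emph{not} $\pi(h_{(1)})\pi(S(h_{(2)}))\pi(h_{(3)}k)$; so granting your (false) intermediate step and applying \ref{PR3} correctly would give the absurd conclusion $\pi(h)\pi(k) = \eps(h)\pi(k)$.

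The repair is a matter of handedness, and it is what the paper does. Instead of planting the idempotent to the \emph{right} of $\pi(h_{(1)})$ (which strands the uncontractible pair $\pi(h_{(1)})\pi(h_{(2)})$), use the identity $\pi(h) = \pi(h_{(1)})\pi(S(h_{(2)}))\pi(h_{(3)})$ --- which, as you note, follows from \ref{PR3} alone (take $k = h_{(3)}$; \eqref{eq:eps_trivial} is not needed for it) --- so that
\[
\pi(h)\pi(k) = \pi(h_{(1)})\pi(S(h_{(2)}))\pi(h_{(3)})\pi(k).
\]
Now the last three factors match the pattern of \ref{PR5} (with $h_{(2)}$ playing the role of the single coalgebra element), which absorbs $\pi(k)$ into $\pi(h_{(3)}k)$, and the remaining prefix $\pi(h_{(1)})\pi(S(h_{(2)}))$ is a genuine $\varepsilon$-pattern that \eqref{eq:eps_trivial} collapses to $\eps(h_{(1)})1_B$, leaving $\pi(hk)$. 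In short, the idempotent must be inserted so that its $S$-leg sits immediately to the left of the factor that will swallow $k$; your placement puts the $S$-leg on the wrong side.
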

	\begin{proof}
		The direct implication is trivial. For the converse, remark that it follows from \ref{PR3} that for all \(h \in H\)
		\[\pi(h_{(1)}) \pi(S(h_{(2)})) \pi(h_{(3)}) = \pi(h).\]
		So if \eqref{eq:eps_trivial} holds, then
		\begin{align*}
			\pi(h) \pi(k) &\stackrel{\mathmakebox[\widthof{  =  }]{}}{=} \pi(h_{(1)}) \pi(S(h_{(2)})) \pi(h_{(3)}) \pi(k) \\
			&\stackrel{\mathmakebox[\widthof{  =  }]{\ref{PR5}}}{=} \pi(h_{(1)}) \pi(S(h_{(2)})) \pi(h_{(3)}k) \\
			&\stackrel{\mathmakebox[\widthof{  =  }]{\eqref{eq:eps_trivial}}}{=} \pi(hk)
		\end{align*}
		for all \(h, k \in H\).
	\end{proof}
	
	Given a partial representation \(\pi : H \to B\) and \(h \in H\), we will write
	\begin{equation}
		\varepsilon_{h}^{\pi} \coloneqq \pi(h_{(1)})\pi(S(h_{(2)})) \in B.
	\end{equation} 
	
	The following theorem is an easy application of \cref{th:universal_property}. 
	\begin{theorem}
		Let \(H\) be a Hopf algebra. The following are equivalent:
		\begin{enumerate}[(i)]
			\item Every partial representation of \(H\) is global.
			\item The map \([-] : H \to H_{par}\) is an algebra map.
			\item The map \([-] : H \to H_{par}\) is an isomorphism of algebras.
			\item \(A_{par} = k 1_{H_{par}}\).
		\end{enumerate}
	\end{theorem}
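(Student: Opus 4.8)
The plan is to prove the cycle of implications $(i)\Rightarrow(ii)\Rightarrow(iii)\Rightarrow(iv)\Rightarrow(i)$, relying throughout on the universal property \thref{universal_property}, the canonical maps \eqref{eq:bracket} and \eqref{eq:removing_brackets}, and \leref{global_partial}. For $(i)\Rightarrow(ii)$: the map $[-]\colon H\to H_{par}$ is itself a partial representation (as noted just after \eqref{eq:bracket}), so under hypothesis $(i)$ it is global, i.e.\ an algebra morphism. For $(ii)\Rightarrow(iii)$: if $[-]$ is multiplicative then, together with relation \eqref{eq:Hparrelation1}, one gets $[h_1]\cdots[h_n]=[h_1\cdots h_n]$; since $H_{par}$ is generated by the brackets, this makes $[-]$ surjective. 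It is injective because the algebra map \eqref{eq:removing_brackets} is a left inverse of $[-]$ (it sends $[h]\mapsto h$). Hence $[-]$ is an isomorphism of algebras.

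For $(iii)\Rightarrow(iv)$: an isomorphism is in particular multiplicative, so $\varepsilon_h=[h_{(1)}][S(h_{(2)})]=[h_{(1)}S(h_{(2)})]=\varepsilon(h)[1_H]=\varepsilon(h)1_{H_{par}}$; as $A_{par}$ is generated by the $\varepsilon_h$, we conclude $A_{par}=k1_{H_{par}}$. For $(iv)\Rightarrow(i)$: given any partial representation $\pi\colon H\to B$, \thref{universal_property} yields an algebra map $\hat\pi\colon H_{par}\to B$ with $\pi=\hat\pi\circ[-]$, so that $\varepsilon_h^\pi=\hat\pi(\varepsilon_h)$. By $(iv)$ we may write $\varepsilon_h=\lambda_h1_{H_{par}}$ for scalars $\lambda_h\in k$; to identify $\lambda_h=\varepsilon(h)$, apply the algebra map $\hat\varepsilon\colon H_{par}\to k$ induced via \thref{universal_property} by the counit $\varepsilon_H\colon H\to k$ (a global, hence partial, representation), and compute $\lambda_h=\hat\varepsilon(\varepsilon_h)=\varepsilon_H(h_{(1)})\varepsilon_H(S(h_{(2)}))=\varepsilon(h)$. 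Therefore $\varepsilon_h^\pi=\hat\pi(\varepsilon(h)1_{H_{par}})=\varepsilon(h)1_B$ for all $h\in H$, and \leref{global_partial} shows that $\pi$ is global.

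Essentially everything here is formal bookkeeping with the universal property; the one point that genuinely needs care is the normalization $\lambda_h=\varepsilon(h)$ in the step $(iv)\Rightarrow(i)$, which is why the counit is brought in as an auxiliary partial representation to pin down the scalar. (Alternatively, one could invoke the isomorphism $H_{par}\cong A_{par}\underline{\#}H$ to see directly that $A_{par}=k1_{H_{par}}$ collapses the partial smash product to $H$ itself, but the route above keeps the dependencies minimal.)
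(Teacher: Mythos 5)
Your proposal is correct and follows essentially the same route as the paper: the equivalence of (i)--(iii) via the universal property and the map \equref{removing_brackets}, and the link to (iv) via the generators \(\varepsilon_h\) and \cref{le:global_partial}. The only (immaterial) difference is that you normalize the scalar \(\lambda_h=\epsilon(h)\) by applying the counit-induced algebra map \(H_{par}\to k\), whereas the paper applies the algebra map \equref{removing_brackets} to \(H\); both work equally well.
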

	\begin{proof}
		The equivalence of the first three items follows from \cref{th:universal_property} and the description of \(H_{par}\) as a quotient of \(T(H)\).  For the equivalence with the last point, recall that \(A_{par}\) is generated by the elements \(\varepsilon_h = \varepsilon_h^{[-]}\) \eqref{eq:epsilonh}. If \(\varepsilon_h \in k1_{H_{par}}\), then \(\varepsilon_h = \epsilon(h) 1_{H_{par}}\) (which can be seen by applying the algebra map \eqref{eq:removing_brackets}). Now the result follows from applying \cref{le:global_partial} to the partial representation \([-] : H \to H_{par}\).
	\end{proof}
	
	\begin{lemma}\label{le:trivial_epsilon_VW}
		
		Let $\pi:H \to B$ be a partial representation and let $V$ and $W$ be two linear subspaces of $H$ such that for all \(h \in V \cup W\),
		$$ \varepsilon_{h}^{\pi} = \epsilon(h)1_B. $$
		Then $\varepsilon_h^{\pi} = \eps(h)1_B$, for all $h \in \Delta^{-1}(H\x V + W\x H)$.
		
	\end{lemma}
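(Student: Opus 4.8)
The plan is to reduce the statement to a simple quadratic recursion satisfied by the ``defect'' of $\pi$ and then to read that recursion off against the given decomposition of $\Delta(h)$. Define the linear map $\delta \colon H \to B$ by $\delta_h \coloneqq \varepsilon_h^\pi - \epsilon(h)1_B$. The hypothesis says exactly that $\delta$ vanishes on $V \cup W$, and we must show it vanishes on every $h \in \Delta^{-1}(H \otimes V + W \otimes H)$; note also $\delta_{1_H} = 0$ by \ref{PR1}.

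The key step is to establish that $\varepsilon_h^\pi = \varepsilon_{h_{(1)}}^\pi \varepsilon_{h_{(2)}}^\pi$ for all $h \in H$, and then translate it into a relation for $\delta$. The displayed identity is just relation \eqref{eq:Arelation2} transported to $B$: since $\pi$ factors as $\hat\pi \circ [-]$ with $\hat\pi \colon H_{par} \to B$ an algebra map by \cref{th:universal_property}, one has $\varepsilon_h^\pi = \hat\pi([h_{(1)}][S(h_{(2)})]) = \hat\pi(\varepsilon_h) = \hat\pi(\varepsilon_{h_{(1)}}\varepsilon_{h_{(2)}}) = \varepsilon_{h_{(1)}}^\pi \varepsilon_{h_{(2)}}^\pi$ (alternatively this follows in two lines directly from \ref{PR2}). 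Substituting $\varepsilon_x^\pi = \delta_x + \epsilon(x)1_B$ into it and using linearity of $\delta$ together with the counit identities $\epsilon(h_{(1)})\delta_{h_{(2)}} = \delta_h = \delta_{h_{(1)}}\epsilon(h_{(2)})$, every term cancels except
\[\delta_h = -\,\delta_{h_{(1)}}\,\delta_{h_{(2)}}.\]
Now take $h$ with $\Delta(h) = \sum_i a_i \otimes v_i + \sum_j w_j \otimes b_j$, where $a_i, b_j \in H$, $v_i \in V$, $w_j \in W$. Since $\delta$ is linear, $\delta_{h_{(1)}}\delta_{h_{(2)}} = \sum_i \delta_{a_i}\delta_{v_i} + \sum_j \delta_{w_j}\delta_{b_j}$, and this vanishes because $\delta_{v_i} = 0$ and $\delta_{w_j} = 0$ by hypothesis. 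Hence $\delta_h = 0$, i.e. $\varepsilon_h^\pi = \epsilon(h)1_B$.

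I expect the only real content to be the derivation of the recursion $\delta_h = -\delta_{h_{(1)}}\delta_{h_{(2)}}$; once it is in hand the rest is bookkeeping, the one point requiring care being to check that both ``mixed'' terms $\epsilon(h_{(1)})\delta_{h_{(2)}}$ and $\delta_{h_{(1)}}\epsilon(h_{(2)})$ collapse to $\delta_h$, so that it is precisely the purely quadratic term that must balance $\delta_h$ itself. As a sanity check, on a grouplike element $g$ the recursion reads $\delta_g = -\delta_g^2$, which is just the familiar fact that $\varepsilon_g^\pi = \pi(g)\pi(S(g))$ is an idempotent of $B$.
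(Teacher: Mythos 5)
Your proof is correct and follows essentially the same route as the paper: both hinge on the identity $\varepsilon_h^\pi=\varepsilon_{h_{(1)}}^\pi\varepsilon_{h_{(2)}}^\pi$ (obtained from \ref{PR2}, or equivalently from \eqref{eq:Arelation2} via $\hat\pi$) combined with evaluating the resulting bilinear expression on the given decomposition of $\Delta(h)$. Your repackaging via the defect $\delta_h=\varepsilon_h^\pi-\epsilon(h)1_B$ and the recursion $\delta_h=-\delta_{h_{(1)}}\delta_{h_{(2)}}$ is just a cleaner way of organizing the paper's add-and-subtract computation.
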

	
	\begin{proof}
		Take $h \in \Delta^{-1}(H\x V + W\x H)$. Then there exist finite families $x_i, y_j \in H$, $v_i \in V$ and  $w_j \in W $ such that
		\begin{equation} \Delta(h) =  \sum_{i} x_i\x v_i + \sum_{j}w_j\x y_j. \label{eq:hVW1} \end{equation}
		Applying \(\epsilon\) to either tensorand of the equality yields
		\begin{equation}
			h =  \sum_{i} \eps(x_i)v_i + \sum_{j} \eps(w_j)y_j = \sum_{i} \eps(v_i)x_i + \sum_{j} \eps(y_j)w_j. \label{eq:hVW2} 
        \end{equation}
        This implies that
		\begin{equation}
			\label{eq:epsh2}
			\varepsilon^{\pi}_{h} = \sum_{i} \eps(x_i)\eps(v_i)1_B + \sum_{j} \eps(w_j)\varepsilon^{\pi}_{y_j} = \sum_{i} \eps(v_i)\varepsilon^{\pi}_{x_i} + \sum_{j} \eps(y_j)\eps(w_j)1_B
		\end{equation}
    because \(\varepsilon^\pi_h\) is linear in \(h\).
    On the other hand, it follows from \ref{PR2} that
		$$ \pi(h_{(1)})\pi(S(h_{(2)})) = \pi(h_{(1)})\pi(S(h_{(2)}))\pi(h_{(3)})\pi(S(h_{(4)})),$$
		so $\varepsilon^{\pi}_h = \varepsilon^{\pi}_{h_{(1)}}\varepsilon^{\pi}_{h_{(2)}}$. Combining this with (\ref{eq:hVW1}) gives
		\begin{equation} 
			\label{eq:epsh1}
			\varepsilon^{\pi}_h = \sum_{i} \varepsilon^{\pi}_{x_i}\varepsilon^{\pi}_{v_i} + \sum_{j}\varepsilon^{\pi}_{w_j}\varepsilon^{\pi}_{y_j} = \sum_{i} \epsilon(v_i)\varepsilon^{\pi}_{x_i} + \sum_{j}\epsilon(w_j)\varepsilon^{\pi}_{y_j} 
		\end{equation}
		because \(v_i, w_j \in V \cup W\).
		Finally,
		\begin{align*}
			\varepsilon^{\pi}_h & \stackrel{\mathmakebox[\widthof{   =   }]{}}{=}  \varepsilon^{\pi}_h + \varepsilon^{\pi}_h - \varepsilon^{\pi}_h\\
			& \stackrel{\mathmakebox[\widthof{   =   }]{(\ref{eq:epsh2},\, \ref{eq:epsh1})}}{=} \Bigl(\sum_{i} \eps(x_i)\eps(v_i)1_B + \sum_{j} \eps(w_j)\varepsilon^{\pi}_{y_j}\Bigr) + \Bigl(\sum_{i} \eps(v_i)\varepsilon^{\pi}_{x_i} + \sum_{j} \eps(y_j)\eps(w_j)1_B\Bigr) \\
			& \stackrel{\mathmakebox[\widthof{   =   }]{}}{-} \Bigl(\sum_{i} \eps(v_i)\varepsilon^{\pi}_{x_i} + \sum_{j}\eps(w_j)\varepsilon^{\pi}_{y_j}\Bigr) \\
			& \stackrel{\mathmakebox[\widthof{   =   }]{}}{=} \Bigl(\sum_{i} \eps(x_i)\eps(v_i) + \sum_{j} \eps(y_j)\eps(w_j)\Bigr)1_B \\
			&\stackrel{\mathmakebox[\widthof{   =   }]{}}{=}\eps(h)1_B. \qedhere
		\end{align*}
	\end{proof}
	
	This lemma becomes very useful when applying it to the coradical filtration of \(H\).
	
	\begin{theorem}
		\label{th:coradicalglobal}
		Let $H$ be a Hopf algebra, $H_0$ the coradical of $H$, and $\pi:H \to B$ a partial representation of $H$ on $B$. Then $\pi:H \to B$ is a global representation if and only if
		\[\varepsilon^{\pi}_h = \eps(h)1_B \qquad  \text{for all }h \in H_0.\]
	\end{theorem}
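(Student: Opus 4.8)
The plan is as follows. The forward implication is immediate: if $\pi$ is a global representation then $\varepsilon^\pi_h = \pi(h_{(1)})\pi(S(h_{(2)})) = \pi(h_{(1)}S(h_{(2)})) = \eps(h)1_B$ for \emph{every} $h \in H$, in particular for $h \in H_0$; equivalently, one may simply quote \cref{le:global_partial}. For the converse, by \cref{le:global_partial} it suffices to upgrade the hypothesis ``$\varepsilon^\pi_h = \eps(h)1_B$ on $H_0$'' to the same statement on all of $H$. The idea is that this triviality, once known on the coradical, propagates up the entire coradical filtration via \cref{le:trivial_epsilon_VW}, and then $H = \bigcup_{n\geq 0} H_n$ does the rest.

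Concretely, I would prove by induction on $n$ that $\varepsilon^\pi_h = \eps(h)1_B$ for all $h \in H_n$. The base case $n = 0$ is precisely the assumption. For the inductive step, assume the statement for $H_{n-1}$ with $n \geq 1$. Since the coradical filtration is increasing we have $H_0 \subseteq H_{n-1}$, so $\varepsilon^\pi_h = \eps(h)1_B$ holds for all $h \in H_{n-1} \cup H_0 = H_{n-1}$. Now apply \cref{le:trivial_epsilon_VW} with $V = H_{n-1}$ and $W = H_0$: it yields $\varepsilon^\pi_h = \eps(h)1_B$ for all
\[h \in \Delta^{-1}(H \otimes H_{n-1} + H_0 \otimes H) = H_n,\]
the last equality being just the definition of $H_n$. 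This closes the induction, and since $H = \bigcup_{n \geq 0} H_n$ by \cref{le:coradical_filtration}, we conclude $\varepsilon^\pi_h = \eps(h)1_B$ for every $h \in H$. An application of \cref{le:global_partial} then shows that $\pi$ is a global representation.

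I do not expect any serious obstacle here: the genuine content is packaged into \cref{le:trivial_epsilon_VW}, and what remains is the bookkeeping of matching the set $V \cup W$ (on which the inductive hypothesis must hold) with $H_{n-1}$, and the target set $\Delta^{-1}(H \otimes H_{n-1} + H_0 \otimes H)$ with $H_n$. The only point one must not overlook is that $H_0 \subseteq H_{n-1}$ for $n \geq 1$, which is exactly what makes the hypotheses of \cref{le:trivial_epsilon_VW} available at each step (for $n=1$ one has $V = W = H_0$).
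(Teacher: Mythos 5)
Your proposal is correct and follows essentially the same route as the paper: the paper's proof is exactly an inductive application of \cref{le:trivial_epsilon_VW} with $V = H_{n-1}$ and $W = H_0$, followed by $H = \bigcup_{n\geq 0} H_n$ and \cref{le:global_partial}. You have merely spelled out the induction (including the observation $H_0 \subseteq H_{n-1}$) in more detail than the paper does.
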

	\begin{proof}
		Apply \cref{le:trivial_epsilon_VW} inductively, taking \(V = H_{n - 1}\) and \(W = H_0\). Since \(\bigcup_{n \geq 0} H_n= H,\) the result follows from \cref{le:global_partial}.
	\end{proof}
	
	Recall that a Hopf algebra is called \textit{connected} when its coradical is 1-dimensional. We get the following corollary from \cref{th:coradicalglobal}.
	\begin{corollary}
		\label{cor:connected}
		Let \(H\) be a connected Hopf algebra. Then every partial representation is global, i.\,e.\ \(H_{par} = H\). 
	\end{corollary}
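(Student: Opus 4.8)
The plan is to deduce this immediately from \cref{th:coradicalglobal}. Since \(H\) is connected, by definition \(H_0 = k1_H\), so the only elements of the coradical are scalar multiples of \(1_H\). Thus the hypothesis of \cref{th:coradicalglobal} reduces, by linearity of \(h \mapsto \varepsilon^\pi_h\), to checking the single identity \(\varepsilon^\pi_{1_H} = \epsilon(1_H)1_B = 1_B\).

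This identity is forced by the axioms: using \ref{PR1} and \(S(1_H) = 1_H\), we compute
\[\varepsilon^\pi_{1_H} = \pi((1_H)_{(1)})\pi(S((1_H)_{(2)})) = \pi(1_H)\pi(1_H) = 1_B 1_B = 1_B.\]
Hence \(\varepsilon^\pi_h = \epsilon(h)1_B\) holds for every \(h \in H_0\), and \cref{th:coradicalglobal} gives that \(\pi\) is a global representation. Since this holds for every partial representation \(\pi : H \to B\), the earlier theorem characterizing Hopf algebras without partiality applies: the map \([-] : H \to H_{par}\) is an algebra isomorphism, so \(H_{par} = H\).

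There is essentially no obstacle here — this is a direct specialization of the previous theorem — so the only thing to be careful about is to note explicitly that connectedness makes the coradical hypothesis automatic (via the trivial verification for \(1_H\)), rather than something one needs to impose on \(\pi\).
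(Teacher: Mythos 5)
Your proof is correct and follows exactly the route the paper intends: specializing \cref{th:coradicalglobal} to \(H_0 = k1_H\) and observing that \(\varepsilon^\pi_{1_H} = 1_B\) is automatic from \ref{PR1}. The paper leaves this verification implicit, so your explicit check of the base case is a welcome addition but not a different argument.
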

	
	\begin{example}
	\label{ex:connected}
	We already encountered a first class of connected Hopf algebras in \cref{ex:lie}: universal enveloping algebras of Lie algebras. For those, the conclusion of \cref{cor:connected} was already known from \cite{alves2015partial}. In \cite{class_connected}, a classification of the connected Hopf algebras of Gelfand-Kirillov dimension at most four is given, if the field is algebraically closed and of characteristic zero. In \cite{gilmartin}, an example of Gelfand-Kirillov dimension 5 is constructed. This connected Hopf algebra is not isomorphic as an algebra to any universal enveloping algebra of a Lie algebra. For the interested reader, we describe two more classes of examples of connected Hopf algebras.
	\begin{enumerate}[(i)]
		\item Combinatorial Hopf algebras. These typically have a basis indexed by a certain combinatorial object, e.\,g. the Connes-Kreimer algebra \(\mathcal{H}_R\) which is generated by rooted forests. Combinatorial Hopf algebras are graded by the size of the object (the number of vertices of the rooted forest), and they are always connected, since there is only one object of size zero (the empty tree). 
		
		\noindent The product is the linear extension of some \textit{combination rule}, describing the ways two objects can be put together, while the coproduct is given by some \textit{decomposition rule}, describing how an object can be broken up. For instance, in the Connes-Kreimer algebra, the product is given by disjoint union of rooted forests, and the coproduct is found by (admissibly) cutting the trees in all possible ways. We refer to \cite{ConnesKreimer} for more details. The Connes-Kreimer algebra is not cocommutative because the piece of the cut that contains the root is always placed in the second tensorand.
		
		\noindent Another example of a combinatorial Hopf algebra is the shuffle algebra \(\mathcal{S}(N)\), introduced in \cite{ree}. It has as a basis the set of words in the alphabet \(\{1, \dots, N\}\). The product of two words is given by the sum of all their  interleavings, and the coproduct by deconcatenation.

		\item Dual group algebras in positive characteristic. Let \(k\) be a field of characteristic \(p\) and \(G\) be a finite \(p\)-group. Then every irreducible representation over \(k\) of \(G\) is trivial (see \cite[\S 8.3]{serre}).
		It follows that every simple comodule of \((kG)^*\) is trivial, which shows that it is a connected Hopf algebra. Hence \((kG^*)_{par} = kG^*\) under these conditions. 
	\end{enumerate}
	\end{example}
	
	Let us end this section by remarking that a connected Hopf algebra is necessarily infinite-dimensional if the characteristic of \(k\) is 0.
	
	\begin{lemma}
		\label{le:connected_infinite}
		Let \(k\) be a field of characteristic 0 and \(H\) a finite-dimensional connected Hopf algebra over \(k\). Then \(H\) is trivial, i.\,e.\ \(\dim H = 1.\)
	\end{lemma}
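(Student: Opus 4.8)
The plan is to argue by contradiction: assume $\dim H>1$ and deduce that $H$ must be infinite-dimensional, which is absurd. The argument has two stages, and the hypothesis $\cha k = 0$ is used only in the second.

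First I would produce a nonzero primitive element. Since $H$ is connected, $H_0=k1_H$, and by \cref{le:coradical_filtration} the coradical filtration is exhaustive. If we had $H_1=H_0$, then the recursion $H_n=\Delta^{-1}(H\otimes H_{n-1}+H_0\otimes H)$ would give $H_n=H_0$ for every $n$ by induction, so $H=\bigcup_n H_n=k1_H$, contrary to $\dim H>1$. Hence there is $x\in H_1$ with $x\notin k1_H$, and by definition $\Delta(x)=a\otimes 1_H+1_H\otimes b$ for some $a,b\in H$. Applying the counit to either tensorand gives $a=x-\epsilon(b)1_H$ and $b=x-\epsilon(a)1_H$; applying $\epsilon\otimes\epsilon$ to the equality gives $\epsilon(a)+\epsilon(b)=\epsilon(x)$, and substituting back yields $\Delta(x)=x\otimes 1_H+1_H\otimes x-\epsilon(x)\,1_H\otimes 1_H$. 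Therefore $y:=x-\epsilon(x)1_H$ is primitive, and $y\neq 0$ because $x\notin k1_H$.

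Next I would show that $1_H,y,y^2,\dots$ are linearly independent, which contradicts finite-dimensionality. Suppose not, and let $n$ be minimal with $y^n\in\mathrm{span}\{1_H,y,\dots,y^{n-1}\}$; note $n\geq 2$, since a nonzero primitive element is never a scalar multiple of $1_H$ (indeed $\Delta(c1_H)=c\,1_H\otimes 1_H$ differs from $c1_H\otimes 1_H+1_H\otimes c1_H$ unless $c=0$). Write $y^n=\sum_{i=0}^{n-1}a_iy^i$ and apply the iterated comultiplication $\Delta^{(n-1)}\colon H\to H^{\otimes n}$, which is an algebra map with $\Delta^{(n-1)}(y)=\sum_{l=1}^{n}1_H\otimes\cdots\otimes y\otimes\cdots\otimes 1_H$ (a sum of pairwise commuting tensors). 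By the multinomial theorem $\Delta^{(n-1)}(y^n)=\sum_{j_1+\cdots+j_n=n}\binom{n}{j_1,\dots,j_n}\,y^{j_1}\otimes\cdots\otimes y^{j_n}$. Here the monomial $y\otimes\cdots\otimes y$ occurs with coefficient $n!$; every other summand either already has all of its tensor exponents $<n$, or has some $j_l=n$ and then, after the substitution $y^n=\sum_{i<n}a_iy^i$, becomes a combination of tensors with all exponents $<n$ --- and in neither situation is another copy of $y\otimes\cdots\otimes y$ produced. On the other side, $\Delta^{(n-1)}(y^n)=\sum_{i<n}a_i\Delta^{(n-1)}(y^i)$ is a combination of tensors $y^{j_1}\otimes\cdots\otimes y^{j_n}$ with $\sum_l j_l=i<n$, hence contributes nothing to the coefficient of $y\otimes\cdots\otimes y$. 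Since $\{1_H,\dots,y^{n-1}\}$ is linearly independent by the minimality of $n$, the family $\{y^{j_1}\otimes\cdots\otimes y^{j_n}:0\le j_l\le n-1\}$ is linearly independent in $H^{\otimes n}$, so comparing coefficients of $y\otimes\cdots\otimes y$ forces $n!=0$ in $k$, which is impossible in characteristic zero. Hence $\{y^n\}_{n\ge0}$ is linearly independent, $\dim H=\infty$, and the contradiction proves $\dim H=1$.

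The delicate point is the bookkeeping in the last stage: "comparing the coefficient of $y\otimes\cdots\otimes y$" is legitimate only after normalizing each side into the linearly independent family $\{y^{j_1}\otimes\cdots\otimes y^{j_n}\}_{0\le j_l\le n-1}$ --- which is precisely why the minimality of $n$ is invoked --- and one must verify that the sole contribution of this monomial is the fully split term of $\Delta^{(n-1)}(y^n)$, carrying the multinomial coefficient $\binom{n}{1,\dots,1}=n!$. Everything else in the argument is routine, and the first stage does not even use finiteness of $\dim H$.
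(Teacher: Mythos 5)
Your proof is correct, and its first stage is exactly the paper's argument: use connectedness to write $\Delta(x)=a\otimes 1_H+1_H\otimes b$ for $x\in H_1$ and massage this into a primitive element $x-\epsilon(x)1_H$ (the paper phrases it as showing $H_1=H_0$ directly, you phrase it contrapositively, but the computation is the same). Where you genuinely diverge is the second stage: the paper simply cites Radford's Corollary 9.1.2 for the fact that a finite-dimensional bialgebra over a field of characteristic $0$ has no nonzero primitive elements, whereas you prove this from scratch by showing that the powers $1_H,y,y^2,\dots$ of a nonzero primitive $y$ are linearly independent. Your multinomial bookkeeping is sound: the minimality of $n$ does guarantee that $\{y^{j_1}\otimes\cdots\otimes y^{j_n}: 0\le j_l\le n-1\}$ is linearly independent, the only contributions to $y^{\otimes n}$ with some $j_l=n$ have all other slots equal to $1_H$ and so disappear after substitution (here $n\ge 2$ matters), and the right-hand side $\sum_i a_i\Delta^{(n-1)}(y^i)$ only involves total degree $i<n$, so the comparison legitimately yields $n!=0$. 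The trade-off is the usual one: the paper's version is shorter but rests on an external reference, while yours is self-contained and makes visible exactly where characteristic $0$ enters (through the nonvanishing of $n!$); in positive characteristic $p$ your argument correctly degenerates at $n=p$, consistent with the existence of nontrivial finite-dimensional connected Hopf algebras there, such as $k[y]/(y^p)$.
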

	\begin{proof}
		Take \(x \in H_1\). Since \(\Delta(x) \in H \otimes H_0 + H_0 \otimes H\) by definition of \(H_1\), there exist \(y, z \in H\) such that
		\[\Delta(x) = y \otimes 1_H + 1_H \otimes z.\]
		Then \(x = \epsilon(y) 1_H + z = y + \epsilon(z) 1_H\) and
		\begin{align*}
			\Delta(z - \epsilon(z) 1_H) &= \Delta(x) - \epsilon(y) 1_H \otimes 1_H - \epsilon(z) 1_H \otimes 1_H \\
			&= (y - \epsilon(y) 1_H) \otimes 1_H + 1_H \otimes (z - \epsilon(z) 1_H) \\
			&= (z - \epsilon(z) 1_H) \otimes 1_H + 1_H \otimes (z - \epsilon(z) 1_H),
		\end{align*}
		so \(z - \epsilon(z) 1_H\) is a primitive element. By \cite[Corollary 9.1.2]{radford}, if \(k\) has characteristic zero, then the only primitive element in a finite-dimensional bialgebra over \(k\) is 0. Hence \(z - \epsilon(z) 1_H = 0\), which implies that \(x = \epsilon(y + z) 1_H \in H_0\). Hence \(H_1 = H_0,\) and it follows that \(H_n = H_0\) for all \(n \geq 0\). Now we conclude by \cref{le:coradical_filtration} that \(H = \bigcup_{n \geq 0} H_n = H_0 = k1_H\).
	\end{proof}

	\subsection{A partial converse}
	
	One might wonder if the converse of \cref{cor:connected} is true: if \(H\) is not connected, does it have a partial representation which is not global? The following proposition gives a positive answer if \(H\) is cosemisimple. 
	
	\begin{proposition}
		\label{le:cosemisimple_partial}
		Let \(H\) be a nontrivial cosemisimple Hopf algebra with invertible antipode. Then there exists a partial representation of \(H\) which is not global.
	\end{proposition}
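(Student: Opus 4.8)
The plan is to exhibit a single explicit one‑dimensional partial representation $\lambda : H \to k$ and to check directly that it is not global. Since $H$ is cosemisimple, as a coalgebra it is a direct sum $H=\bigoplus_{i\in I}C_i$ of its simple subcoalgebras. The line $k1_H$ is a one‑dimensional, hence simple, subcoalgebra, so it occurs among the $C_i$; write $C_0=k1_H$. As $H$ is nontrivial, $|I|\ge 2$. I would then let $\lambda : H\to k$ be the projection onto $C_0$ followed by the identification $k1_H\cong k$; concretely, $\lambda$ is the linear functional with $\lambda|_{C_0}=\epsilon|_{C_0}$ and $\lambda|_{C_i}=0$ for $i\ne 0$. (For $H=kC_2$ this $\lambda$ is exactly the non‑global partial representation $1\mapsto 1$, $g\mapsto 0$ of \cref{ex:C2}, a useful sanity check.)

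Before verifying the axioms I would record three structural facts. First, each $C_i$ is a subcoalgebra, so $\Delta(C_i)\subseteq C_i\otimes C_i$; in particular all Sweedler components of an element of $C_i$ again lie in $C_i$. Second, $\epsilon$ is nonzero on every nonzero subcoalgebra $C$: if $\epsilon(C)=0$ then $c=\epsilon(c_{(1)})c_{(2)}=0$ for all $c\in C$. Third — and this is where the hypothesis on the antipode enters — since $S$ is bijective, $D\mapsto S(D)$ is a bijection from the set of simple subcoalgebras to itself, and it fixes $C_0$ because $S(1_H)=1_H$; hence $S(C_i)\ne C_0$ for every $i\ne 0$.

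Next I would check that $\lambda$ satisfies \ref{PR1}--\ref{PR3}, so that it is a partial representation on $B=k$. Axiom \ref{PR1} is immediate. Both sides of \ref{PR2} and of \ref{PR3} are bilinear in the pair $(h,k)$, so it suffices to test them when $h\in C_i$ and $k\in C_j$. For \ref{PR2}: if $j\ne 0$, then $\Delta(k)\in C_j\otimes C_j$, so every summand of either side carries the factor $\lambda(S(k_{(2)}))$, which vanishes because $S(k_{(2)})\in S(C_j)$ and $\lambda$ is zero on the simple subcoalgebra $S(C_j)\ne C_0$; hence both sides are $0$. If $j=0$, write $k=c1_H$ with $c\in k$; then $S(k_{(2)})=1_H$ and both sides collapse to $c\,\lambda(h)$. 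For \ref{PR3}: if $i\ne 0$, then every summand of either side carries the factor $\lambda(h_{(1)})=0$ (since $h_{(1)}\in C_i$) and both sides are $0$; if $i=0$, write $h=c1_H$ and both sides collapse to $c\,\lambda(k)$.

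Finally I would show $\lambda$ is not global via \cref{le:global_partial}. Since $H$ is nontrivial there is an index $i\ne 0$, and by the second structural fact there is $h\in C_i$ with $\epsilon(h)=1$. Then $h_{(1)}\in C_i$, so $\varepsilon^{\lambda}_{h}=\lambda(h_{(1)})\lambda(S(h_{(2)}))=0\ne 1=\epsilon(h)1_k$, and \cref{le:global_partial} shows $\lambda$ is not an algebra morphism. I do not anticipate a real obstacle: this is a direct construction, and the only points requiring care are the short case distinctions in \ref{PR2}/\ref{PR3} and making sure the invertibility of $S$ is genuinely used — it is precisely what guarantees that the non‑trivial simple subcoalgebras are not carried into $k1_H$ by $S$.
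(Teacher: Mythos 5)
Your proof is correct and follows essentially the same route as the paper: the paper writes $H=k1_H\oplus C$ with $C$ the sum of the simple subcoalgebras not containing $1_H$, notes that $C$ is stable under the (invertible) antipode, defines the same functional (equal to $\epsilon$ on $k1_H$ and zero on $C$), and exhibits the same failure of globality on an element of $C$ with nonzero counit. Your finer decomposition into individual simple subcoalgebras and the bilinearity reduction are just a more explicit write-up of the same verification.
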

	\begin{proof}
        By cosemisimplicity, \(H = k1_H \oplus C\) for some subcoalgebra \(C\) of \(H\). In fact, \(C\) is the (direct) sum of all simple subcoalgebras of \(H\) that do not contain \(1_H\).
        Moreover \(C\) is closed under the antipode, because if \(D\) is a simple subcoalgebra of \(H\) that does not contain \(1_H\), then so is \(S(D)\) by the invertibility of \(S\). Define a linear map \(\pi : H \to k\) by \(\pi(1_H) = 1\) and \(\pi(c) = 0\) for all \(c \in C\). It is easy to see that \(\pi\) is a partial representation, because in axiom \ref{PR2} both sides become zero if \(k \in C\), and reduce to \(\pi(h)\) if \(k = 1_H\). So \ref{PR2} holds for any \(h, k \in H = k1_H \oplus C\) and by reasoning on \(h\) instead of \(k,\) one sees that also \ref{PR3} holds. Take \(x \in C \setminus \ker \epsilon\). Such an element exists, because if not \(c = c_{(1)} \epsilon(c_{(2)}) = 0\) for all \(C,\) while we supposed that \(H\) is nontrivial. Now
		\[\pi(x_{(1)}) \pi(S(x_{(2)})) = 0 \neq  \epsilon(x) = \pi(x_{(1)} S(x_{(2)})),\]
		which shows that \(\pi\) is not a global representation.
	\end{proof}
	
	\begin{corollary}
		\label{cor:cosemisimple_partial}
		Let \(H\) be a Hopf algebra with invertible antipode which possesses a nontrivial cosemisimple Hopf quotient. Then there exists a partial representation of \(H\) which is not global.
	\end{corollary}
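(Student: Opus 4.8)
The plan is to obtain a non-global partial representation of $H$ by pulling one back along the quotient map. Write $p : H \to \bar{H}$ for the surjective Hopf algebra morphism onto the nontrivial cosemisimple Hopf quotient $\bar{H}$. The first observation is that a cosemisimple Hopf algebra automatically has bijective antipode, so \cref{le:cosemisimple_partial} is applicable to $\bar{H}$ and produces a partial representation $\rho : \bar{H} \to B$ (in fact one may take $B = k$) that is not global.

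The next step is to check that precomposition with a morphism of Hopf algebras sends partial representations to partial representations: if $f : H \to H'$ is a Hopf algebra morphism and $\rho : H' \to B$ is a partial representation, then $\rho \circ f : H \to B$ is a partial representation. Axiom (PR1) is immediate, since $f(1_H) = 1_{H'}$. For (PR2), one uses that $f$ is a coalgebra map, so that $\Delta(f(k)) = f(k_{(1)}) \otimes f(k_{(2)})$, and that $f$ intertwines the antipodes, so that $f(S(k_{(2)})) = S(f(k)_{(2)})$; applying (PR2) for $\rho$ to the elements $f(h)$ and $f(k)$ and using that $f$ is an algebra map then yields (PR2) for $\rho \circ f$. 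Axiom (PR3) is verified symmetrically. Taking $f = p$, the composite $\pi \coloneqq \rho \circ p$ is a partial representation of $H$.

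It remains to see that $\pi$ is not global. By \cref{le:global_partial} it suffices to exhibit one $h \in H$ with $\varepsilon^{\pi}_h \neq \epsilon(h) 1_B$. Since $\rho$ is not global, there is some $x \in \bar{H}$ with $\rho(x_{(1)}) \rho(S(x_{(2)})) \neq \epsilon(x) 1_B$; using surjectivity of $p$, pick $h \in H$ with $p(h) = x$. As $p$ is a coalgebra map that commutes with the antipode and with the counit, $\varepsilon^{\pi}_h = \rho(p(h)_{(1)}) \rho(S(p(h)_{(2)})) = \rho(x_{(1)}) \rho(S(x_{(2)})) \neq \epsilon(x) 1_B = \epsilon(h) 1_B$, which is the desired conclusion.

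The only step that is not a purely formal verification is the appeal to the classical fact that a cosemisimple Hopf algebra has bijective antipode, which is exactly what makes \cref{le:cosemisimple_partial} applicable to the quotient $\bar{H}$ (the hypothesis that $H$ itself has invertible antipode plays no further role in this particular argument). Everything else is a routine diagram chase that only uses that $p$ respects the full Hopf structure.
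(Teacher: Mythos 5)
Your proof is correct and is exactly the argument the paper intends (the corollary is stated without proof as an immediate consequence of \cref{le:cosemisimple_partial}): pull back the non-global partial representation of the cosemisimple quotient along the Hopf algebra projection, using that partial representations are stable under precomposition with Hopf algebra maps and that surjectivity lets the witness of non-globality lift. Your observation that cosemisimple Hopf algebras automatically have bijective antipode (being co-Frobenius) is a valid way to justify applying \cref{le:cosemisimple_partial} to the quotient, and it neatly sidesteps the question of whether invertibility of $S_H$ descends to the quotient.
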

	
	We say a Hopf algebra has the \textit{Chevalley property} if the tensor product of any two simple modules is semisimple. If \(H\) is finite-dimensional, this is equivalent to saying that the Jacobson radical of \(H\) is a Hopf ideal, or that the coradical of \(H^*\) is a Hopf subalgebra by \cite[Proposition 4.2]{andruskiewitsch_chevalley}.
	
	\begin{proposition}
		Let \(k\) be an algebraically closed field of characteristic 0 and suppose that \(H\) is a nontrivial finite-dimensional Hopf algebra over \(k\) with the Chevalley property. Then \(H \neq H_{par}\).
	\end{proposition}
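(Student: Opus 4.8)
The plan is to reduce to \cref{cor:cosemisimple_partial} by exhibiting a nontrivial cosemisimple Hopf quotient of $H$. Since $H$ is finite-dimensional, its antipode is bijective --- this is the classical Larson--Sweedler fact that finite-dimensional Hopf algebras are Frobenius algebras --- so the hypothesis on the antipode in \cref{cor:cosemisimple_partial} will be automatic, and everything reduces to producing a surjective Hopf algebra map from $H$ onto a nontrivial cosemisimple Hopf algebra.

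For this I would argue as follows. By the Chevalley property together with the characterization recalled just before the statement (\cite[Proposition 4.2]{andruskiewitsch_chevalley}), the Jacobson radical $\rad(H)$ is a Hopf ideal of $H$; hence $\bar{H} \coloneqq H/\rad(H)$ is a finite-dimensional Hopf algebra and the canonical projection $p\colon H \twoheadrightarrow \bar{H}$ is a morphism of Hopf algebras. As an algebra, $\bar{H}$ is the quotient of a finite-dimensional algebra by its Jacobson radical, hence semisimple. Now the characteristic-zero hypothesis enters: by the Larson--Radford theorem, a finite-dimensional semisimple Hopf algebra over a field of characteristic zero is cosemisimple, so $\bar{H}$ is cosemisimple.

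It then remains to check that $\bar{H}$ is nontrivial, and here I would invoke \cref{le:connected_infinite}. Suppose $\dim_k \bar{H} = 1$. Since $\rad(H) \subseteq \ker\varepsilon$ and $\ker\varepsilon$ also has codimension one, this forces $\rad(H) = \ker\varepsilon$, i.e.\ $H$ is a local algebra. Dualizing, the coradical of $H^*$ coincides with $(\rad(H))^{\perp} = (H/\rad(H))^{*} = k$, so $H^*$ is a finite-dimensional connected Hopf algebra over a field of characteristic zero; by \cref{le:connected_infinite} it is trivial, whence $H \cong (H^*)^*$ is trivial, contradicting the hypothesis. Therefore $\bar{H}$ is a nontrivial cosemisimple Hopf quotient of $H$, and \cref{cor:cosemisimple_partial} provides a partial representation of $H$ that is not global; by the description of $H_{par}$ as a quotient of $T(H)$ this means $[-]\colon H\to H_{par}$ is not an isomorphism, i.e.\ $H \neq H_{par}$.

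The main obstacle is the passage from \emph{semisimple} Hopf quotient to \emph{cosemisimple} Hopf quotient, which is exactly where characteristic zero is used essentially, via the Larson--Radford theorem; without it one only obtains that $H/\rad(H)$ is semisimple, which is not what \cref{cor:cosemisimple_partial} requires. A secondary technical point is the nontriviality of $\bar{H}$, handled above by dualizing to a connected Hopf algebra and applying \cref{le:connected_infinite} (which again quietly uses characteristic zero).
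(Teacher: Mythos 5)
Your proposal is correct and follows essentially the same route as the paper: Chevalley property gives that $J(H)$ is a Hopf ideal, $H/J(H)$ is semisimple hence cosemisimple by Larson--Radford, nontriviality follows by dualizing to a connected Hopf algebra and invoking \cref{le:connected_infinite}, and one concludes via \cref{cor:cosemisimple_partial}. Your explicit remark that the antipode of a finite-dimensional Hopf algebra is automatically bijective is a small but welcome addition that the paper leaves implicit.
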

	\begin{proof}
		Since \(H\) has the Chevalley property, its Jacobson radical \(J(H)\) is a Hopf ideal. Hence \(H/J(H)\) is a semisimple Hopf algebra, which is cosemisimple by \cite{LRsemisimple}. If \(H/J(H)\) would be trivial, then \(H^*\) would be a connected Hopf algebra, because the coradical of \(H^*\) is \((H/J(H))^*\). But over a field of characteristic 0, there are no nontrivial finite-dimensional connected Hopf algebras by \cref{le:connected_infinite}, so \(H/J(H)\) must be nontrivial. The result now follows from \cref{cor:cosemisimple_partial}.
	\end{proof}
	
	\begin{remark}
	    Although the coradical of \(H\) can predict to some extent the existence of true partial representations, it does not say anything about the wildness of the partial representation theory or the nature of \(H_{par}\): if \(k\) is an infinite field of characteristic different from 2, then the group algebra \(kC_2\) is equal to its coradical and has only one simple partial representation which is not global (\cref{ex:C2}), while the Sweedler Hopf algebra also has coradical \(kC_2\) but has infinitely many non-isomorphic simple partial representations (\cref{ex:sweedler}). Indeed, consider the algebra maps
        \[H_{par} \cong k[x, y]/(xy) \times H \to k : (P(x) + \lambda y, z) \mapsto P(\alpha)\]
        for \(\alpha \in k\).
	\end{remark}

	\section{Smash products and partial representations}
	\label{se:smash}

 \subsection{Smash products of (Hopf) algebras}
    \label{se:intro_smash}
	
	Let \(U\) and \(H\) be \(k\)-algebras and \(R : H \otimes U \to U \otimes H\) a linear map. 
We will use the notation (summation understood)
\[R(h \otimes u) = u^R \otimes h^R \in U \otimes H.\]
If multiple copies of \(R\) are needed, we will also use \(u^r \otimes h^r, u^{\rho} \otimes h^{\rho}, u^{R_1} \otimes h^{R_2},\) etc.

By definition $U \#_R H $ is equal to $U\x H$ as $k$-vector space, and its elements will often be denoted as \(u \#_R h = u \otimes h\). It bears the multiplication given by
\begin{equation}\label{equation.multiplication.on.AB.elements}
	(u\#_R h)(u'\#_R h') = uu'^R\#_R h^Rh'.
\end{equation}

\begin{defi}
	\label{def:smash}
	If $U\#_R H$ is an associative $k$-algebra with unit $1_U\#_R 1_H$, then we call $U\#_R H$ a \textit{smash product}.
\end{defi}

We recall the necessary and sufficient conditions on \(R\) for \(U\#_R H\) to be a smash product, following \cite{CIMZsmash}.

\begin{defi}[{\cite[Definition 2.4, 2.6]{CIMZsmash}}] 
\label{def:multiplicative}
The linear map \(R : H \otimes U \to U \otimes H\) is said to be
	\begin{enumerate}[(i)] 
		\item \textit{left normal} if $R(h\x 1_U) = 1_U\x h$ for all \(h \in H\);
		\item \textit{right normal} if
		$R(1_H\x u) = u\x 1_H$ for all \(u \in U\);
		\item \textit{left multiplicative} if 
		\begin{equation}
			\label{eq:leftmult}
			u^R\x (hh')^R = (u^R)^r\x h^r(h')^R,
		\end{equation}
		for all \(u \in U\) and \(h, h' \in H\);
		\item \textit{right multiplicative} if
		\begin{equation}
			\label{eq:rightmult}
			(uu')^R\x h^R = u^R(u')^r\x (h^R)^r,
		\end{equation}
		for all \(u, u' \in U\) and \(h \in H\).
	\end{enumerate}
	We say \(R\) is \textit{normal} (resp.\ multiplicative) if it is both left and right normal (resp.\ multiplicative).
\end{defi}

\begin{theorem}[{\cite[Theorem 2.5]{CIMZsmash}}]
	\label{th:smash}
	Let \(U, H\) be algebras and \(R : H \otimes U \to U \otimes H\) a linear map. Then \(U \#_R H\) is a smash product if and only if \(R\) is normal and multiplicative. 
\end{theorem}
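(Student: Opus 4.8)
The plan is to verify the two clauses of \cref{def:smash} for the product \eqref{equation.multiplication.on.AB.elements} one at a time, and to observe that each clause is controlled by exactly one half of the hypothesis: unitality with unit $1_U\#_R 1_H$ is equivalent to normality of $R$, and associativity (given unitality) is equivalent to multiplicativity of $R$.

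For the unit clause I would compute, straight from \eqref{equation.multiplication.on.AB.elements}, that $(1_U\#_R 1_H)(u\#_R h)=u^R\#_R 1_H^R h$ and $(u\#_R h)(1_U\#_R 1_H)=u\,1_U^R\#_R h^R$ for all $u\in U$, $h\in H$. Comparing the first with $u\#_R h$ and specializing $h=1_H$ shows $1_U\#_R 1_H$ is a left unit if and only if $R(1_H\otimes u)=u\otimes 1_H$ for all $u$, i.e.\ $R$ is right normal; symmetrically, it is a right unit if and only if $R$ is left normal. Hence $1_U\#_R 1_H$ is a two-sided unit exactly when $R$ is normal, which settles the unit clause in both directions of the theorem.

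For associativity I would proceed in two steps. Assuming $R$ is multiplicative, I expand $\bigl[(u\#_R h)(v\#_R g)\bigr](w\#_R f)$ and $(u\#_R h)\bigl[(v\#_R g)(w\#_R f)\bigr]$ via \eqref{equation.multiplication.on.AB.elements}. In the first expression $R$ ends up applied to the product $h^R g$ living in the $H$-slot, which I rewrite with left multiplicativity \eqref{eq:leftmult}; in the second it ends up applied to the product $v\,w^r$ in the $U$-slot, which I rewrite with right multiplicativity \eqref{eq:rightmult}. Both then collapse to the single element $u\,v^R(w^r)^\rho\#_R(h^R)^\rho g^r f$, in which the three copies $R,r,\rho$ of the twisting map are applied respectively to $h\otimes v$, to $g\otimes w$, and to the two resulting middle factors; so the product is associative (note this step does not use normality). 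Conversely, if $U\#_R H$ is already a smash product, then $R$ is normal by the previous paragraph, and it is enough to evaluate associativity on two well-chosen kinds of triples: $(1_U\#_R h)\bigl[(1_U\#_R h')(u\#_R 1_H)\bigr]=\bigl[(1_U\#_R h)(1_U\#_R h')\bigr](u\#_R 1_H)$ reduces, after simplifying with normality, to precisely \eqref{eq:leftmult}, while the mirror triple $\bigl[(1_U\#_R h)(u\#_R 1_H)\bigr](u'\#_R 1_H)=(1_U\#_R h)\bigl[(u\#_R 1_H)(u'\#_R 1_H)\bigr]$ yields precisely \eqref{eq:rightmult}. Combined with the unit discussion, this proves both implications.

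The only genuine difficulty is bookkeeping. The associativity identity conceals three nested applications of $R$, and one must apply \eqref{eq:leftmult} and \eqref{eq:rightmult} in exactly the right tensor slot and with compatible labelling of the copies of $R$, so that the two sides become visibly the same element of $U\otimes H$; conceptually this is merely the verification of a pentagon/hexagon-type coherence condition for the twisting map $R$, but keeping the Sweedler-style superscripts aligned is the delicate point.
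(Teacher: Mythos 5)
Your proof is correct; the paper itself does not prove \cref{th:smash} but cites it from \cite{CIMZsmash}, and your argument is exactly the standard one given there: unitality of $1_U\#_R 1_H$ is equivalent to normality, associativity on general triples follows from the two multiplicativity identities applied in the $H$-slot and $U$-slot respectively, and conversely associativity on the special triples $(1_U\#_R h),(1_U\#_R h'),(u\#_R 1_H)$ and $(1_U\#_R h),(u\#_R 1_H),(u'\#_R 1_H)$ recovers \eqref{eq:leftmult} and \eqref{eq:rightmult} after simplifying with normality. The bookkeeping in your associativity computation checks out: both sides reduce to $u\,v^R(w^r)^\rho\#_R(h^R)^\rho g^r f$ with $R$, $r$, $\rho$ applied to $h\otimes v$, $g\otimes w$, and $h^R\otimes w^r$.
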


From the normality of \(R\) we get the following.
\begin{lemma}
	\label{le:algebra_inclusions}
	Let $U\#_R H$ be a smash product. Then the inclusions
	$$ \iota_U : U \to U\#_R H \ \ \textrm{ and } \ \ \iota_H:H \to U\#_R H $$
	are algebra maps. Moreover $u\#_R h = (u\#_R 1_H)(1_U\#_R h)$ for all \(u \in U, h \in H\).
\end{lemma}

Suppose now that \(U\) and \(H\) are bialgebras. Then the tensor product \(U \otimes H\) trivially carries a coalgebra structure: 
\[\Delta(u \otimes h) = (u_{(1)} \otimes h_{(1)}) \otimes (u_{(2)} \otimes h_{(2)}); \qquad \epsilon(u \otimes h) = \epsilon_U(u)\epsilon_H(h).\] If a smash product \(U \#_R H\) becomes a bialgebra for this coproduct and counit, then it is called an \textit{\(R\)-smash product}. 

\begin{theorem}[{\cite[Corollary 4.6]{CIMZsmash}}]
	\label{th:Rsmash}
	Let \(U\) and \(H\) be algebras and \(R : H \otimes U \to U \otimes H\) a linear map. Then \(U \#_R H\) is an \(R\)-smash product if and only if \(R\) is normal, multiplicative and a coalgebra map. 
	
	If moreover \(U\) and \(H\) are Hopf algebras with antipodes \(S_U\) and \(S_H\) respectively,
	then \(U \#_R H\) is a Hopf algebra with antipode
	\begin{equation}
		\label{eq:antipode_smash}
		S_{U \#_R H}(u \#_R h) = (R \circ (S_H \otimes S_U) \circ \tau_{U, H})(u \#_R h) = S_U(u)^R \#_R S_H(h)^R.
	\end{equation}
\end{theorem}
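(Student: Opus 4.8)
The plan is to treat the two halves of the statement separately, leaning on \cref{th:smash} for the underlying algebra structure.

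\emph{The bialgebra characterisation.} Since the coproduct and counit proposed on $U \#_R H$ are literally those of the tensor coalgebra $U \otimes H$, coassociativity and counitality hold automatically; hence, granting \cref{th:smash}, the only issue is for which $R$ (already assumed normal and multiplicative) the maps $\Delta$ and $\epsilon$ are algebra morphisms. Unwinding $\epsilon\big((u \#_R h)(u' \#_R h')\big)$ via \eqref{equation.multiplication.on.AB.elements} gives $\epsilon_U(u)\,(\epsilon_U \otimes \epsilon_H)\big(R(h \otimes u')\big)\,\epsilon_H(h')$, so $\epsilon$ is multiplicative iff $(\epsilon_U \otimes \epsilon_H)\circ R = \epsilon_H \otimes \epsilon_U$; the nontrivial (``only if'') direction is obtained by putting $u = 1_U$, $h' = 1_H$. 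Likewise, computing $\Delta\big((u \#_R h)(u' \#_R h')\big)$ and $\Delta(u \#_R h)\,\Delta(u' \#_R h')$ — the latter requiring a second copy of $R$ once one multiplies in $(U \#_R H)^{\otimes 2}$ — and again specialising $u = 1_U$, $h' = 1_H$ and using normality to handle a general argument of $\Delta$, one finds $\Delta$ multiplicative iff $\Delta_{U \otimes H} \circ R = (R \otimes R) \circ \Delta_{H \otimes U}$. The conjunction of these two conditions is exactly ``$R$ is a coalgebra map'', which settles the first equivalence.

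\emph{The antipode.} Now assume $U$ and $H$ are Hopf, and define $S \colon U \#_R H \to U \#_R H$ by $S(u \#_R h) = S_U(u)^R \#_R S_H(h)^R$ as in \eqref{eq:antipode_smash}. Left and right normality give $S \circ \iota_U = \iota_U \circ S_U$ and $S \circ \iota_H = \iota_H \circ S_H$, so $S(1) = 1$ and $S$ restricts to an algebra anti-morphism on each of the subalgebras $\iota_U(U)$ and $\iota_H(H)$ (cf. \cref{le:algebra_inclusions}). Using the identity $\iota_H(h)\iota_U(u) = \iota_U(u^R)\iota_H(h^R)$ together with the very definition of $S$, one propagates anti-multiplicativity from these generators to all of $U \#_R H = \iota_U(U)\iota_H(H)$; the single non-formal ingredient here is a hexagon-type compatibility of $R$ with the two antipodes, extracted from $R$ being a coalgebra map and from the left/right multiplicativity and normality axioms. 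With $S$ now an algebra anti-endomorphism of the bialgebra $U \#_R H$, one invokes the standard fact that $\{\, b : b_{(1)}S(b_{(2)}) = \epsilon(b)1 = S(b_{(1)})b_{(2)} \,\}$ is then a subalgebra; since it contains the generating subalgebras $\iota_U(U)$ and $\iota_H(H)$ — there the convolution identities collapse to the antipode axioms of $U$ and $H$, e.g. $\iota_H(h_{(1)})S(\iota_H(h_{(2)})) = \iota_H\big(h_{(1)}S_H(h_{(2)})\big) = \epsilon_H(h)\,1$ — it is all of $U \#_R H$. Hence $S$ is an antipode, with the asserted formula.

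\emph{Main obstacle.} The bialgebra half is essentially bookkeeping with \eqref{equation.multiplication.on.AB.elements}, so the genuine work lies in the antipode half, and precisely in showing that $S$ is anti-multiplicative on all of $U \#_R H$: this is where the exact interaction of $R$ with $S_U$, $S_H$ and with the smash multiplication is used, rather than just the formal consequences of normality. An alternative that sidesteps isolating the anti-morphism property is to verify $\id \ast S = S \ast \id = \eta_{U \#_R H}\circ \epsilon$ directly by a Sweedler computation on a generic $u \#_R h$, collapsing the nested copies of $R$ by left multiplicativity; this is heavier computationally but conceptually uniform.
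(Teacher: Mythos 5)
Your bialgebra half is the standard argument and is fine; the paper itself does not reprove this part (it is imported from \cite[Corollary 4.6]{CIMZsmash}), so the comparison really concerns the antipode. There your primary route inverts the paper's logic, and the inversion is not innocent. To propagate anti-multiplicativity of $S$ to all of $U\#_R H$ you must handle the reordered products $\iota_H(h)\iota_U(u)=\iota_U(u^R)\iota_H(h^R)$, and anti-multiplicativity on exactly these is the identity
\[ S_U(u^R)^r\otimes S_H(h^R)^r = S_U(u)\otimes S_H(h), \]
i.e.\ \eqref{eq:antipode}. This is precisely the \emph{extra hypothesis} that \cite{CIMZsmash} impose on $S_U,S_H$ in their Corollary 4.6; it is true but it is not a formal consequence of normality, multiplicativity and the coalgebra-map property --- proving it directly requires the matched-pair compatibilities \eqref{eq:Striangle1}--\eqref{eq:Striangle2} and a genuine computation. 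Your proposal labels this ``the single non-formal ingredient'' and leaves it unexecuted, so the crux of your main route is missing rather than merely routine. The paper deliberately goes the other way: it verifies the convolution identities $S*\id=\id*S=\eta\circ\epsilon$ directly on a generic $u\#_R h$ (using \cref{le:algebra_inclusions} and the commutation lemmas, following \cite[Proposition 4.7]{bulacu}), concludes that $S$ is an antipode and hence automatically anti-multiplicative, and only then \emph{deduces} \eqref{eq:antipode} --- this is how the paper justifies dropping CIMZ's extra hypothesis. That is exactly the ``alternative'' you mention in your last paragraph; it is not merely a computationally heavier variant but the route that avoids the unproved step. Your subalgebra argument for propagating the convolution identities from $\iota_U(U)$ and $\iota_H(H)$ is correct \emph{given} anti-multiplicativity, so your proof becomes complete once you either carry out the direct derivation of \eqref{eq:antipode} or switch to the direct verification of the antipode axiom.
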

Using Sweedler notation, the fact that \(R\) is a coalgebra map reads
\begin{gather}
	\epsilon_U(u^R) \epsilon_H(h^R) = \epsilon_U(u) \epsilon_H(h), \label{eq:Reps} \\
	(u^R)_{(1)} \otimes (h^R)_{(1)} \otimes (u^R)_{(2)} \otimes (h^R)_{(2)} = (u_{(1)})^R \otimes (h_{(1)})^R \otimes (u_{(2)})^r \otimes (h_{(2)})^r, \label{eq:Rcoalg}
\end{gather}
for all \(u \in U, h \in H\). 
In \cite[Corollary 4.6]{CIMZsmash}, \(S_U\) and \(S_H\) are also required to satisfy the relation 
\begin{equation}
	\label{eq:antipode}
	S_U(u^R)^r \otimes S_H(h^R)^r = S_U(u) \otimes S_H(h)
\end{equation}
but this is in fact automatic: it can be shown easily using \cref{le:algebra_inclusions} that 
\[S_{U \#_R H}(u_{(1)} \#_R h_{(1)}) (u_{(2)} \#_R h_{(2)}) = \epsilon_U(u) \epsilon_H(h) 1_U \#_R 1_H = (u_{(1)} \#_R h_{(1)}) S_{U \#_R H}(u_{(2)} \#_R h_{(2)})\]
without resorting to \eqref{eq:antipode} (see e.\,g.\ \cite[Proposition 4.7]{bulacu}). This shows that \(S_{U\#_R H}\) is an antipode (hence anti-multiplicative), and \eqref{eq:antipode} can be deduced by expressing \(S_{U\#_H H}(u^R \#_R h^R)\), using normality of \(R\).

As explained in \cite[Remark 4.4]{CIMZsmash}, Majid already constructed special cases of smash products in the sense of \cref{def:smash} in \cite{majid}, and the \(R\)-smash products we consider correspond exactly to Majid's double crossproducts \(U \bowtie H\), which arise from a matched pair of Hopf algebras \cite[Definition 7.2.1]{majid}. The following proposition explains how to obtain a matched pair from an \(R\)-smash product and vice versa. A more general version (which also twists the comultiplication) appeared in \cite[Theorem 5.4]{bulacu}.

\begin{proposition}
	\label{prop:actions}
	Let \(R\) be as in \cref{th:Rsmash}. Then 
	\begin{equation*}
		H \otimes U \to U : h \otimes u \mapsto h \triangleright u = \epsilon_H(h^R) u^R
	\end{equation*}
	defines a left \(H\)-module structure on \(U\), and
	\begin{equation*}
		H \otimes U \to H : h \otimes u \mapsto h \triangleleft u = \epsilon_U(u^R) h^R
	\end{equation*}
	defines a right \(U\)-module structure on \(H\). 
	With these actions, \((U, H)\) forms a matched pair of Hopf algebras. 
	Moreover, for any \(h \otimes u \in H \otimes U,\)
	\begin{equation}
		\label{eq:Rbyaction}
		u^R \otimes h^R = h_{(1)} \triangleright u_{(1)} \otimes h_{(2)} \triangleleft u_{(2)} =  h_{(2)} \triangleright u_{(2)} \otimes h_{(1)} \triangleleft u_{(1)}.
	\end{equation}
\end{proposition}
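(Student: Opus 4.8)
The plan is to verify each of the module axioms and the matched pair axioms directly, translating everything back to the defining conditions on $R$ via the dictionary $R(h\x u) = u^R \x h^R$ together with the fact (\cref{th:Rsmash}) that $R$ is normal, multiplicative and a coalgebra map, and that $U\#_R H$ is a Hopf algebra. The key observation that makes the computations tractable is \eqref{eq:Rbyaction}: once we know $h\triangleright u = \epsilon_H(h^R)u^R$ and $h\triangleleft u = \epsilon_U(u^R)h^R$, applying $\epsilon$ to the appropriate legs of \eqref{eq:Rcoalg} gives
\[u^R \x h^R = (u_{(1)})^{R} \epsilon\big((h_{(1)})^{R}\big)\x \cdots\]
and after regrouping one recovers $u^R\x h^R = h_{(1)}\triangleright u_{(1)}\x h_{(2)}\triangleleft u_{(2)}$; the second equality in \eqref{eq:Rbyaction} comes from the same computation but splitting the coproduct the other way, or equivalently from cocommutativity of the relevant part of \eqref{eq:Rcoalg}. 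I would prove \eqref{eq:Rbyaction} first, since every subsequent verification is cleanest when phrased in terms of $\triangleright$ and $\triangleleft$ rather than $R$ directly.

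Next I would check that $\triangleright$ is a left $H$-module structure. Unitality $1_H\triangleright u = u$ is immediate from right normality $R(1_H\x u) = u\x 1_H$ and the counit axiom. For associativity $h\triangleright(h'\triangleright u) = (hh')\triangleright u$, I would expand the left side using the definition twice, use left multiplicativity \eqref{eq:leftmult} $u^R\x(hh')^R = (u^R)^r\x h^r(h')^R$ to merge the two copies of $R$, and then apply $\epsilon_H$ to the $H$-leg, noting $\epsilon_H(h^r(h')^R) = \epsilon_H(h^r)\epsilon_H((h')^R)$. Symmetrically, $\triangleleft$ is a right $U$-module structure using left normality and right multiplicativity \eqref{eq:rightmult}. (One should be slightly careful about which of "left/right multiplicative" is needed for which action; this is a routine bookkeeping matter, resolved by tracking whether the two $R$'s compose on the $H$-side or the $U$-side.)

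The substantial part is the matched pair axioms, i.e. the four compatibility conditions relating the two actions to the coalgebra structures (that $\triangleright$ is a coalgebra map in an appropriate twisted sense, that $\triangleleft$ is, and the two "mixed" identities
\[h_{(1)}\triangleleft u_{(1)}\x h_{(2)}\triangleright u_{(2)} = h_{(2)}\triangleleft u_{(2)}\x h_{(1)}\triangleright u_{(1)},\qquad (hh')\triangleright u = \big(h_{(1)}\triangleright(h'_{(1)}\triangleright u_{(1)})\big)\big((h_{(2)}\triangleleft(h'_{(2)}\triangleright u_{(2)}))\triangleright \cdots\big),\]
in Majid's formulation). The cleanest route here is \emph{not} to grind these out componentwise, but to invoke \cite[Remark 4.4]{CIMZsmash}: the $R$-smash product $U\#_R H$ is, as a bialgebra, exactly Majid's double crossproduct $U\bowtie H$ for the matched pair determined by $\triangleright,\triangleleft$, and conversely a double crossproduct \emph{is} a bialgebra precisely when the matched pair axioms hold \cite[Theorem 7.2.3]{majid}. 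So I would argue: the two actions defined above are the unique ones for which formula \eqref{eq:Rbyaction} holds and for which the multiplication \eqref{equation.multiplication.on.AB.elements} coincides with the double-crossproduct multiplication $ (u\#h)(u'\#h') = u(h_{(1)}\triangleright u'_{(1)})\# (h_{(2)}\triangleleft u'_{(2)})h' $; since $U\#_R H$ is known to be a Hopf algebra by \cref{th:Rsmash}, the bialgebra axioms force the matched pair axioms by Majid's theorem. The main obstacle is really just making this identification of multiplications precise — i.e. checking $u(h_{(1)}\triangleright u'_{(1)})\# (h_{(2)}\triangleleft u'_{(2)})h' = uu'^R\# h^Rh'$, which is again immediate from \eqref{eq:Rbyaction} — after which everything else is packaged by citation. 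If one prefers a self-contained argument, the mixed axioms each reduce, via \eqref{eq:Rbyaction}, to applying the coalgebra-map property \eqref{eq:Rcoalg} of $R$ and the (already established) bialgebra structure on $U\#_R H$, which is exactly where the hypotheses of \cref{th:Rsmash} get used in full.
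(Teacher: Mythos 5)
Your proposal is correct and follows essentially the same route as the paper: the module axioms from normality and multiplicativity of $R$, the identities \eqref{eq:Rbyaction} by applying counits to the appropriate tensorands of \eqref{eq:Rcoalg}, and the matched-pair axioms delegated to the identification with Majid's double crossproduct (which the paper handles implicitly by citing \cite[Remark 4.4]{CIMZsmash} and \cite[Theorem 5.4]{bulacu} just before the proposition). The paper's own proof is merely a two-sentence version of exactly these steps.
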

\begin{proof}
	The fact that \(\triangleright\) and \(\triangleleft\) are actions follows from the normality and multiplicativity of \(R\). The equalities in \eqref{eq:Rbyaction} are obtained by applying \(\epsilon_U\) and \(\epsilon_H\) on the first and the fourth (resp.\ the third and the second) tensorand of \eqref{eq:Rcoalg}. 
\end{proof}

\begin{example}
	\label{ex:hopfactionR}
	If the right \(U\)-action \(\triangleleft\) is trivial, i.\,e.\ \(h \triangleleft u = \epsilon_U(u) h\) for all \(u \in U, h \in H\) then we recover the most well-known form of smash product:
	\[R(h \otimes u) = h_{(1)} \triangleright u \otimes h_{(2)},\]
	where \(U\) is now an \(H\)-module algebra by \(\triangleright,\) compatible with the counit and comultiplication. If \(S_H\) is invertible, then so is \(R:\) its inverse is
	\[R'(u \otimes h) = h_{(2)} \otimes S^{-1}(h_{(1)}) \triangleright u.\]
\end{example}

We end this section with some technical lemmas for later use, which appeared already in the PhD thesis of the first author \cite{tiagothesis}. We also determine the inverse of \(R\) in the general case, assuming that \(S_U\) and \(S_H\) are invertible.

\begin{lemma}
	For any \(u \in U, h \in H,\)
	\begin{align}
		S_U(h \triangleright u) &= (h \triangleleft u_{(1)}) \triangleright S_U(u_{(2)}), \label{eq:Striangle1} \\ 
		S_H(h \triangleleft u) &= S_H(h_{(1)}) \triangleleft (h_{(2)} \triangleright u). \label{eq:Striangle2}
	\end{align}
\end{lemma}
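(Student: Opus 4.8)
The plan is to deduce \eqref{eq:Striangle1}--\eqref{eq:Striangle2} from the matched-pair structure of $(U,H)$ recorded in \cref{prop:actions}, by a convolution-inverse (cancellation) argument. I will use only the following standard consequences of being a matched pair: $\triangleright\colon H\otimes U\to U$ is a morphism of coalgebras (i.e.\ $U$ is a left $H$-module coalgebra), $\triangleleft\colon H\otimes U\to H$ is a morphism of coalgebras (i.e.\ $H$ is a right $U$-module coalgebra), the module-algebra compatibilities
\[ h\triangleright(uv)=(h_{(1)}\triangleright u_{(1)})\big((h_{(2)}\triangleleft u_{(2)})\triangleright v\big),\qquad (hk)\triangleleft u=\big(h\triangleleft(k_{(1)}\triangleright u_{(1)})\big)(k_{(2)}\triangleleft u_{(2)}), \]
and the normalisations $h\triangleright 1_U=\epsilon_H(h)1_U$, $1_H\triangleleft u=\epsilon_U(u)1_H$. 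I also use the elementary fact that, for any coalgebra $C$ and Hopf algebra $B$, a coalgebra map $f\colon C\to B$ has two-sided convolution inverse $S_B\circ f$ in $\Hom_k(C,B)$.

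For \eqref{eq:Striangle1} I work in the convolution algebra $\Hom_k(H\otimes U,U)$, where $H\otimes U$ carries the tensor-product coalgebra structure and $U$ its multiplication; its unit is $h\otimes u\mapsto\epsilon_H(h)\epsilon_U(u)1_U$. Set $F:={\triangleright}$ and, abbreviating the right-hand side of \eqref{eq:Striangle1}, $G(h\otimes u):=(h\triangleleft u_{(1)})\triangleright S_U(u_{(2)})$. Since $\triangleright$ is a coalgebra map, $S_U\circ F$ is the two-sided convolution inverse of $F$; hence it suffices to prove that $F*G$ equals the convolution unit, because then left-multiplying by $S_U\circ F$ gives $G=(S_U\circ F)*(F*G)=S_U\circ F$, which is exactly \eqref{eq:Striangle1}. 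To evaluate $F*G$ I expand the coproduct of $H\otimes U$ and then apply the first module-algebra identity with $a=u_{(1)}$ and $b=S_U(u_{(2)})$: the product telescopes to $h\triangleright\big(u_{(1)}S_U(u_{(2)})\big)=h\triangleright(\epsilon_U(u)1_U)=\epsilon_H(h)\epsilon_U(u)1_U$, as needed.

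Since \eqref{eq:Striangle2} is the mirror statement, I run the dual argument in $\Hom_k(H\otimes U,H)$: put $F':={\triangleleft}$, whose two-sided convolution inverse is $S_H\circ F'$ (again because $\triangleleft$ is a coalgebra map), and $G'(h\otimes u):=S_H(h_{(1)})\triangleleft(h_{(2)}\triangleright u)$. This time I verify that $G'*F'$ equals the convolution unit, whence right-multiplying by $S_H\circ F'$ yields $G'=(G'*F')*(S_H\circ F')=S_H\circ F'$. The computation of $G'*F'$ uses the second module-algebra identity with $h$ replaced by $S_H(h_{(1)})$ and $k$ by $h_{(2)}$, after which the product telescopes to $\big(S_H(h_{(1)})h_{(2)}\big)\triangleleft u=\epsilon_H(h)\epsilon_U(u)1_H$.

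The step I expect to require care is not any single computation but the choice of the right combination: \eqref{eq:Striangle1} must be obtained by showing $F*G$ (not $G*F$) is the unit via the $\triangleright$-over-$U$ axiom, whereas \eqref{eq:Striangle2} needs $G'*F'$ (not $F'*G'$) via the $\triangleleft$-over-$H$ axiom, so the two halves are genuinely not interchangeable under $U\leftrightarrow H$; and one must keep track of the Sweedler legs carefully when comultiplying $S_U$ and $S_H$ of products. A second, more computational route also works: apply the anti-multiplicative antipode of $U\#_R H$ to the identity $\iota_H(h)\iota_U(u)=\iota_U(h_{(1)}\triangleright u_{(1)})\iota_H(h_{(2)}\triangleleft u_{(2)})$ (the reading of \eqref{eq:Rbyaction} inside $U\#_R H$), re-expand the right-hand side with the same identity, and compare with \eqref{eq:antipode_smash}; but extracting the two scalar identities from the resulting equation is awkward because neither projection $U\#_R H\to U$ nor $U\#_R H\to H$ is an algebra map, so I would keep this only as a consistency check.
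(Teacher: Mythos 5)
Your proposal is correct and is essentially the paper's own argument: the paper likewise observes that \(S_U\circ\triangleright\) is the convolution inverse of the coalgebra map \(\triangleright\) in \(\mathsf{Hom}_k(H\otimes U,U)\), computes exactly your \(F*G=(h_{(1)}\triangleright u_{(1)})\bigl((h_{(2)}\triangleleft u_{(2)})\triangleright S_U(u_{(3)})\bigr)=\epsilon_U(u)\epsilon_H(h)1_U\) (there phrased via right multiplicativity and comultiplicativity of \(R\), which is the matched-pair identity you invoke), and concludes by uniqueness of the one-sided inverse, treating \eqref{eq:Striangle2} symmetrically. Your explicit attention to which side the convolution must be taken on in each case is a point the paper leaves implicit, but the route is the same.
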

\begin{proof}
	It is easy to see that \(S_U \circ\, \triangleright\) is the inverse of \(\triangleright\) in the convolution algebra \(\mathsf{Hom}_k(H \otimes U, U)\). One can calculate, using the right multiplicativity and the comultiplicativity of \(R\), that for any \(u \in U, h \in H,\)
	\[(h_{(1)} \triangleright u_{(1)})((h_{(2)} \triangleleft u_{(2)}) \triangleright S_U(u_{(3)})) = \epsilon_U(u) \epsilon_H(h) 1_U,\]
	so \(\triangleright \circ (\triangleleft \otimes S_U) \circ (I_H \otimes \Delta_U)\) is a right convolution inverse of \(\triangleright\). Being invertible, the right inverse of \(\triangleright\) is unique, and \eqref{eq:Striangle1} follows. Identity \eqref{eq:Striangle2} is shown in a similar way. 
\end{proof}

\begin{lemma}
	\label{le:W}
	For any \(u \in U, h \in H,\)
	\begin{gather}
		(u^r)^R\x S_H(h_{(1)})^R\x (h_{(2)})^r = u_{(1)} \x S_H((h \triangleleft u_{(2)})_{(1)})\x (h \triangleleft u_{(2)})_{(2)},\label{eq:W1} \\
		{u_{(1)}}^R \otimes S_U(u_{(2)})^r \otimes (h^{R})^{r} = (h_{(1)} \triangleright u)_{(1)} \otimes S_U((h_{(1)} \triangleright u)_{(2)}) \otimes h_{(2)}. \label{eq:W2}
	\end{gather}
\end{lemma}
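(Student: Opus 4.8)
The two identities are mixed antipode relations for the matched pair $(U,H)$ of \cref{prop:actions}, and the plan is to verify both by a direct computation. The ingredients are: the expression \eqref{eq:Rbyaction} of $R$ through the actions $\triangleright$ and $\triangleleft$; the fact that $\triangleright : H\otimes U\to U$ and $\triangleleft : H\otimes U\to H$ are morphisms of coalgebras (part of the matched-pair structure); the left/right module axioms for $\triangleright$ and $\triangleleft$; the identities \eqref{eq:Striangle1} and \eqref{eq:Striangle2}; and the usual Hopf algebra relations $S(x_{(1)})x_{(2)}=\epsilon(x)1=x_{(1)}S(x_{(2)})$ together with the fact that $S_U$ and $S_H$ are anti-coalgebra maps. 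The two statements are structurally symmetric under interchanging $U\leftrightarrow H$, $\triangleright\leftrightarrow\triangleleft$, $S_U\leftrightarrow S_H$ (and \eqref{eq:Striangle1}$\leftrightarrow$\eqref{eq:Striangle2}), so I will describe the proof of \eqref{eq:W1} in some detail and then indicate the changes for \eqref{eq:W2}.

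For \eqref{eq:W1}, expand the left-hand side from the inside out. First apply $r$ to $h_{(2)}\otimes u$ via \eqref{eq:Rbyaction}, giving $u^r=h_{(2)}\triangleright u_{(1)}$ and $(h_{(2)})^r=h_{(3)}\triangleleft u_{(2)}$ after renumbering. Then apply $R$ to $S_H(h_{(1)})\otimes u^r$, again via \eqref{eq:Rbyaction}, using that $S_H$ reverses comultiplication and that $\triangleright$ is a coalgebra map to split the coproducts of $S_H(h_{(1)})$ and of $h_{(2)}\triangleright u_{(1)}$. After flattening all Sweedler indices by coassociativity, the first tensor leg takes the form $\bigl(S_H(h_{(i)})h_{(i+1)}\bigr)\triangleright(-)$ — two consecutive applications of $\triangleright$ have been merged using that it is a left action — and since $S_H(h_{(i)})h_{(i+1)}=\epsilon(\,\cdot\,)1_H$ this collapses, leaving just $u_{(1)}$ in that leg. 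The left-hand side thus reduces to
\[u_{(1)}\otimes\bigl(S_H(h_{(1)})\triangleleft(h_{(2)}\triangleright u_{(2)})\bigr)\otimes\bigl(h_{(3)}\triangleleft u_{(3)}\bigr).\]
On the right-hand side, use that $\triangleleft$ is a coalgebra map to write $(h\triangleleft u_{(2)})_{(1)}\otimes(h\triangleleft u_{(2)})_{(2)}=(h_{(1)}\triangleleft u_{(2)})\otimes(h_{(2)}\triangleleft u_{(3)})$, and then apply \eqref{eq:Striangle2} to $S_H(h_{(1)}\triangleleft u_{(2)})$ to turn it into $S_H(h_{(1)})\triangleleft(h_{(2)}\triangleright u_{(2)})$; this produces exactly the triple tensor above, which proves \eqref{eq:W1}.

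Identity \eqref{eq:W2} is handled by the symmetric argument with the roles of $U$ and $H$ interchanged: expand $(u_{(1)})^R\otimes h^R$ and then $(S_U(u_{(2)}))^r\otimes(h^R)^r$ by \eqref{eq:Rbyaction}, using now that $\triangleleft$ is a coalgebra map and that $S_U$ is an anti-coalgebra map; after flattening, the last tensor leg acquires a factor $(-)\triangleleft\bigl(u_{(i)}S_U(u_{(i+1)})\bigr)$ — two consecutive applications of $\triangleleft$ as a right action, merged — which collapses by $u_{(i)}S_U(u_{(i+1)})=\epsilon(\,\cdot\,)1_U$, leaving
\[(h_{(1)}\triangleright u_{(1)})\otimes\bigl((h_{(2)}\triangleleft u_{(2)})\triangleright S_U(u_{(3)})\bigr)\otimes h_{(3)}.\]
On the right-hand side, use that $\triangleright$ is a coalgebra map together with \eqref{eq:Striangle1} applied to $S_U(h_{(2)}\triangleright u_{(2)})$ to reach the same expression.

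No genuinely new idea is needed here; the only real obstacle is the bookkeeping, since several comultiplications are nested and one must carefully track which Sweedler components get telescoped by an antipode. The safe way to organize the computation is to first rewrite everything in terms of $\triangleright$, $\triangleleft$ and the comultiplications of $U$ and $H$ — never leaving an occurrence of $R$ in the expression — then flatten by coassociativity, and only afterwards invoke the antipode identities; done in that order, the two sides of each equation plainly coincide.
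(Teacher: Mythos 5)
Your proof is correct and follows exactly the route the paper takes: its (one-line) proof also says to rewrite the left-hand sides via the actions $\triangleright$, $\triangleleft$ using \eqref{eq:Rbyaction} and then invoke \eqref{eq:Striangle2} (resp.\ \eqref{eq:Striangle1}). Your write-up just supplies the Sweedler bookkeeping and the antipode telescoping explicitly, and those details check out.
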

\begin{proof}
	One should rewrite the left-hand side using the actions \(\triangleleft\) and \(\triangleright,\) and apply \eqref{eq:Striangle2} (resp.\ \eqref{eq:Striangle1}) to obtain  formula \eqref{eq:W1} (resp.\ \eqref{eq:W2}).
\end{proof}

\begin{lemma}
    Suppose that \(S_U\) and \(S_H\) are invertible. If \(R\) is normal, multiplicative and a coalgebra map, then it is invertible, with inverse
\begin{equation}
	\label{eq:R'}
R' : U \otimes H \to H \otimes U : u \otimes h \mapsto S_H(S_H^{-1}(h_{(1)}) \triangleleft S_U^{-1}(u_{(1)})) \otimes S_U(S_H^{-1}(h_{(2)}) \triangleright S_U^{-1}(u_{(2)})).
\end{equation}
\end{lemma}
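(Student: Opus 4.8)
The plan is to realize $R$ inside the Hopf algebra $K := U \#_R H$ (which exists by \cref{th:Rsmash}) and to read off its inverse from the antipode of $K$. By \cref{le:algebra_inclusions} the inclusions $\iota_U : U \to K$ and $\iota_H : H \to K$ are algebra maps, and since the coproduct of $K$ is the tensor-product coproduct on $U \otimes H$ they are coalgebra maps as well; hence they are Hopf algebra maps and commute with the antipodes, $S_K \circ \iota_U = \iota_U \circ S_U$ and $S_K \circ \iota_H = \iota_H \circ S_H$ (and the same with $S^{-1}$ in place of $S$, once $S_K$ is known to be invertible). The computation driving everything is that, under the identification $K = U \otimes H$ of vector spaces, the multiplication \eqref{equation.multiplication.on.AB.elements} together with the normality of $R$ gives
\[
  R(h \otimes u) = (1_U \#_R h)(u \#_R 1_H) = \iota_H(h)\,\iota_U(u),\qquad u \otimes h = (u \#_R 1_H)(1_U \#_R h) = \iota_U(u)\,\iota_H(h),
\]
so that $R$ is exactly the composite $H \otimes U \xrightarrow{\iota_H \otimes \iota_U} K \otimes K \xrightarrow{m_K} K = U \otimes H$.

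First I would show that $R$ is bijective. Since $S_K$ is anti-multiplicative and intertwines the $\iota$'s with the factor antipodes, for all $h \in H$ and $u \in U$
\[
  S_K\bigl(R(h \otimes u)\bigr) = S_K\bigl(\iota_H(h)\,\iota_U(u)\bigr) = S_K(\iota_U(u))\,S_K(\iota_H(h)) = \iota_U(S_U(u))\,\iota_H(S_H(h)) = S_U(u) \otimes S_H(h),
\]
that is, $S_K \circ R = (S_U \otimes S_H) \circ \tau_{H, U}$ as maps $H \otimes U \to U \otimes H$. That $S_K$ itself is invertible is immediate here: $S_K^2$ is an algebra endomorphism of $K$ (the square of an anti-algebra map), and evaluating it on $\iota_U(u)\,\iota_H(h) = u \otimes h$ together with $S_K^2 \circ \iota_U = \iota_U \circ S_U^2$ and $S_K^2 \circ \iota_H = \iota_H \circ S_H^2$ yields $S_K^2 = S_U^2 \otimes S_H^2$; since $S_U$ and $S_H$ are invertible, so is $S_K^2$, hence so is $S_K$. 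Therefore $R = S_K^{-1} \circ (S_U \otimes S_H) \circ \tau_{H, U}$ is a composite of bijections, so $R$ is invertible.

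It then remains to identify $R^{-1}$ with the map $R'$ of \eqref{eq:R'}; since $R$ is bijective, it suffices to check that $R'$ is a right inverse of $R$, and this forces $R' = R^{-1}$ and hence also $R' \circ R = \id_{H \otimes U}$. Write $R'(u \otimes h) = S_H(p) \otimes S_U(q)$ with $p = S_H^{-1}(h_{(1)}) \triangleleft S_U^{-1}(u_{(1)}) \in H$ and $q = S_H^{-1}(h_{(2)}) \triangleright S_U^{-1}(u_{(2)}) \in U$. Expanding $R\bigl(S_H^{-1}(h) \otimes S_U^{-1}(u)\bigr)$ via \eqref{eq:Rbyaction} and using that $S_H^{-1}$ and $S_U^{-1}$ are anti-comultiplicative, one checks the identification $q \otimes p = R\bigl(S_H^{-1}(h) \otimes S_U^{-1}(u)\bigr) = \iota_H(S_H^{-1}(h))\,\iota_U(S_U^{-1}(u))$ in $K$. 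Plugging this in and using the two displayed multiplication identities, anti-multiplicativity of $S_K$, and the intertwining relations twice,
\[
  R\bigl(R'(u \otimes h)\bigr) = \iota_H(S_H(p))\,\iota_U(S_U(q)) = S_K\bigl(\iota_U(q)\,\iota_H(p)\bigr) = S_K\bigl(\iota_H(S_H^{-1}(h))\,\iota_U(S_U^{-1}(u))\bigr) = \iota_U(u)\,\iota_H(h) = u \otimes h,
\]
so $R \circ R' = \id_{U \otimes H}$ and $R' = R^{-1}$. The only real work is the bookkeeping in the "$q \otimes p$"-identification, where the Sweedler indices get reversed by the anti-comultiplicativity of $S_H^{-1}$ and $S_U^{-1}$; a reader who prefers to confirm the explicit shape \eqref{eq:R'} directly instead of invoking uniqueness of the inverse should instead rewrite $S_K^{-1} \circ (S_U \otimes S_H) \circ \tau_{H, U}$ using \eqref{eq:Striangle1}--\eqref{eq:Striangle2}, which is again routine.
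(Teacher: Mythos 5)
Your proof is correct, but it takes a genuinely different route from the paper's. The paper argues entirely at the level of the matched-pair actions: it first derives the ``inverted'' identities $S_H^{-1}(h \triangleleft u) = S_H^{-1}(h_{(2)}) \triangleleft (h_{(1)} \triangleright u)$ and $S_U^{-1}(h \triangleright u) = (h \triangleleft u_{(2)}) \triangleright S_U^{-1}(u_{(1)})$ from \eqref{eq:Striangle1}--\eqref{eq:Striangle2}, and then verifies $R'R = \id$ by a direct Sweedler computation (with $RR' = \id$ ``in the same way''). You instead realize $R$ as multiplication in the Hopf algebra $K = U \#_R H$ via $R(h \otimes u) = \iota_H(h)\iota_U(u)$ and $u \otimes h = \iota_U(u)\iota_H(h)$, deduce $S_K \circ R = (S_U \otimes S_H)\circ\tau_{H,U}$ from the Hopf-algebra-map property of the inclusions, obtain invertibility of $S_K$ from $S_K^2 = S_U^2 \otimes S_H^2$, and conclude that $R$ is bijective; the explicit formula \eqref{eq:R'} is then confirmed by checking it is a one-sided inverse, which your $q \otimes p = R(S_H^{-1}(h) \otimes S_U^{-1}(u))$ identification (a correct application of \eqref{eq:Rbyaction} and anti-comultiplicativity of $S_H^{-1}, S_U^{-1}$) reduces to anti-multiplicativity of $S_K$. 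Your approach buys a conceptual explanation of where \eqref{eq:R'} comes from (it is essentially $\tau_{U,H}\circ(S_U^{-1}\otimes S_H^{-1})\circ S_K$) and gets bijectivity for free, at the cost of first invoking the full Hopf algebra structure of $U\#_R H$ including \eqref{eq:antipode_smash}; the paper's computation is heavier on Sweedler bookkeeping but stays entirely inside $U$ and $H$ and produces both composites explicitly. All the steps you use are justified by results already established in the paper (\cref{th:Rsmash}, \cref{le:algebra_inclusions}, \eqref{eq:Rbyaction}), so there is no gap.
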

\begin{proof}
    It follows from \eqref{eq:Striangle2} that 
    \[S_H(h_{(1)} \triangleleft (S_H(h_{(2)}) \triangleright u)) = S_H(h_{(1)}) \triangleleft (h_{(2)} \triangleright (S_H(h_{(3)}) \triangleright u) 
        = S_H(h) \triangleleft u,\]
    and by applying \(S_H^{-1}\) to both sides and replacing \(h\) by \(S_H^{-1}(h),\) we get
    \begin{equation}
        \label{eq:Sinvtriangle}
        S_H^{-1}(h \triangleleft u) = S_H^{-1}(h_{(2)}) \triangleleft (h_{(1)} \triangleright u).
    \end{equation}
    Similarly,
    \[S_U^{-1}(h \triangleright u) = (h \triangleleft u_{(2)}) \triangleright S_U^{-1}(u_{(1)}).\]
    Now we calculate
    \begin{align*}
        R'R(h \otimes u) &\stackrel{\phantom{\eqref{eq:Rbyaction}}}{=} S_H(S_H^{-1}(h_{(3)} \triangleleft u_{(3)}) \triangleleft S_U^{-1}(h_{(1)} \triangleright u_{(1)})) \otimes S_U(S_H^{-1}(h_{(4)} \triangleleft u_{(4)}) \triangleright S_U^{-1}(h_{(2)} \triangleright u_{(2)})) \\ 
        &\stackrel{\eqref{eq:Rbyaction}}{=} S_H(S_H^{-1}(h_{(2)} \triangleleft u_{(2)}) \triangleleft S_U^{-1}(h_{(1)} \triangleright u_{(1)})) \otimes S_U(S_H^{-1}(h_{(4)} \triangleleft u_{(4)}) \triangleright S_U^{-1}(h_{(3)} \triangleright u_{(3)})) \\
        &\stackrel{\eqref{eq:Sinvtriangle}}{=} S_H((S_H^{-1}(h_{(3)}) \triangleleft (h_{(2)} \triangleright u_{(2)})) \triangleleft S_U^{-1}(h_{(1)} \triangleright u_{(1)})) \\ & \qquad \qquad \qquad \qquad \otimes S_U(S_H^{-1}(h_{(5)} \triangleleft u_{(5)}) \triangleright ((h_{(4)} \triangleleft u_{(4)}) \triangleright S_U^{-1}(u_{(3)}))) \\
        &\stackrel{\phantom{\eqref{eq:Rbyaction}}}{=} h_{(2)} \epsilon_H(h_{(1)}) \epsilon_U(u_{(1)}) \otimes \epsilon_U(u_{(3)}) \epsilon_H(h_{(3)}) u_{(2)} \\
        &\stackrel{\phantom{\eqref{eq:Rbyaction}}}{=} h \otimes u,
    \end{align*}
    and in the same way \(RR'(u \otimes h) = u \otimes h\).
\end{proof}

Clearly \(R'\) is normal because \(R\) is. From the left multiplicativity of \(R,\) we get
\begin{align*}
	R(h^{r'} h'^{R'} \otimes u^{r'R'}) &= u^{r'R'R} \otimes (h^{r'} h'^{R'})^R \\
	&= u^{r'R'Rr} \otimes h^{r'r} h'^{R'R} \\
	&= u \otimes hh',
\end{align*} 
hence \(h^{r'} h'^{R'} \otimes a^{r'R'} = (hh')^{R'} \otimes a^{R'}\) which means that \(R'\) is right multiplicative. Similarly it follows that \(R'\) is left multiplicative from the right multiplicativity of \(R\). Finally, \(R'\) is a coalgebra map because \(R\) is. Since \(R'\) satisfies the same properties as \(R\) with \(H\) and \(U\) swapped, we can write down the statements corresponding to \cref{prop:actions} and \cref{le:W} for \(R'\):
\begin{itemize}
	\item \(H\) is a left \(U\)-module, and \(U\) is a right \(H\)-module via the actions
	\begin{align*}
	    u \blacktriangleright h &= \epsilon_U(u^{R'}) h^{R'} = S_H(S^{-1}_H(h) \triangleleft S_U^{-1}(u)), \\
        u \blacktriangleleft h &= \epsilon_H(h^{R'}) u^{R'} = S_U(S^{-1}_H(h) \triangleright S_U^{-1}(u)).
	\end{align*}
	\item These actions recover \(R'\) since
	\[h^{R'} \otimes u^{R'} = u_{(1)} \blacktriangleright h_{(1)} \otimes u_{(2)} \blacktriangleleft h_{(2)}.\]
	\item We have
	\begin{align}
		\label{eq:W'}
		h^{r'R'} \otimes S_U(u_{(1)})^{R'} \otimes u_{(2)}^{r'} &= h_{(1)} \otimes S_U((u \blacktriangleleft h_{(2)})_{(1)}) \otimes (u \blacktriangleleft h_{(2)})_{(2)}, \\
		{h_{(1)}}^{R'} \otimes S_H(h_{(2)})^{r'} \otimes u^{R'r'} &= (u_{(1)} \blacktriangleright h)_{(1)} \otimes S_H((u_{(1)} \blacktriangleright h)_{(2)}) \otimes u_{(3)}.
	\end{align}
\end{itemize}
	
	\subsection{Main results}
	\label{se:main}
	
Let \(U\) and \(H\) be Hopf algebras and \(R : H \otimes U \to U \otimes H\) be an invertible linear map so that \(U \#_R H\) is an \(R\)-smash product (cf. \cref{th:Rsmash}). Consider the following categories:
\begin{itemize}
	\item the category \(\mathcal{P}_{U, H}^R\) with objects \textit{pairs of partial representations} \((\alpha : U \to B, \eta : H \to B)\) (where \(B\) is any \(k\)-algebra) such that 
\begin{equation}\label{eq:partial.pairs}
    \eta(h) \alpha(u) = \alpha(u^R) \eta(h^R)
\end{equation}	
	for all \(u \in U, h \in H,\)
	and morphisms \textit{algebra maps} \(f : B \to B'\) such that \(f \circ \alpha = \alpha'\) and \(f \circ \eta = \eta'\);
	\item the category \(\PRep_{U \#_R H}^{\mathrm{split}}\) with objects \textit{partial representations} \(\pi : U \#_R H \to B\) such that
	\begin{align}
		\pi(u \#_R h) = \pi(u \#_R 1_H) \pi(1_U \#_R h), \label{eq:split1} \\
		\pi(u^R \# h^R) =  \pi(1_U \#_R h) \pi(u \#_R 1_H), \label{eq:split2}
	\end{align}
	for all \(u \in U, h \in H\), and morphisms \textit{algebra maps} \(f : B \to B'\) such that \(f \circ \pi = \pi'\).
\end{itemize}
We will show that these categories are isomorphic, and that they are furthermore isomorphic to the category of representations of a smash product of the partial ``Hopf'' algebras \(U_{par}\) and \(H_{par}\), which we will now describe.

Define a linear map \(\hat{\mathcal{R}} : T(H) \otimes T(U) \to U_{par} \otimes H_{par}\) by
\begin{equation*}
     \hat{\mathcal{R}}((h_1 \otimes \cdots \otimes h_n) \otimes (u_1 \otimes \cdots \otimes u_m)) = [{u_1}^{R_{11} \cdots R_{1n}}] \cdots [{u_m}^{R_{m1} \cdots R_{mn}}] \otimes [{h_1}^{R_{1n} \cdots R_{mn}}] \cdots [{h_n}^{R_{11} \cdots R_{m1}}].
\end{equation*}
\begin{lemma}
\label{le:Rhat}
    For any \(h, k, h_1, \dots h_n \in H\) and \(u, v, u_1, \dots, u_m \in U\),
    \begin{align*}
		\hat{\mathcal{R}}((S(h_{(1)}) \otimes  h_{(2)} \otimes k) \x (u_1 \otimes \cdots \otimes u_m)) &= \hat{\mathcal{R}}((S(h_{(1)}) \otimes  h_{(2)} k) \x (u_1 \otimes \cdots \otimes u_m)), \\
		\hat{\mathcal{R}}((k \otimes S(h_{(1)}) \otimes h_{(2)}) \x (u_1 \otimes \cdots \otimes  u_m)) &= 
		\hat{\mathcal{R}}((k S(h_{(1)}) \otimes h_{(2)}) \x (u_1 \otimes \cdots \otimes  u_m)), \\
		\hat{\mathcal{R}}((h_1 \otimes \cdots \otimes h_n) \otimes (u_{(1)} \otimes S(u_{(2)}) \otimes v)) &= 
		\hat{\mathcal{R}}((h_1 \otimes \cdots \otimes h_n) \otimes (u_{(1)} \otimes S(u_{(2)}) v)), \\
		\hat{\mathcal{R}}((h_1 \otimes \cdots \otimes h_n) \otimes (v \otimes u_{(1)} \otimes S(u_{(2)}))) &=
		\hat{\mathcal{R}}((h_1 \otimes \cdots \otimes h_n) \otimes (v u_{(1)} \otimes S(u_{(2)}))).
	\end{align*}
\end{lemma}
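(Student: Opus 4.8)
The plan is to prove each of the four identities by stripping away the machinery of $R$ until what remains is one of the partial-representation axioms \ref{PR2}--\ref{PR5} for the canonical partial representations $[-] : H \to H_{par}$ and $[-] : U \to U_{par}$. The tools needed are the multiplicativity of $R$ (its left form \eqref{eq:leftmult} for the two identities that fuse letters of $H$, its right form \eqref{eq:rightmult} for the two that fuse letters of $U$), the coalgebra-map property \eqref{eq:Reps}--\eqref{eq:Rcoalg}, normality, and the antipode--action compatibilities \eqref{eq:Striangle1}--\eqref{eq:Striangle2} (equivalently \eqref{eq:antipode}) to move antipodes past copies of $R$. The four statements being entirely parallel, I would carry out the first one in full and indicate only the modifications for the other three.

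For the first identity, expand both sides using the defining formula for $\hat{\mathcal{R}}$: on the left one slides the block $u_1 \otimes \cdots \otimes u_m$ leftwards past the three consecutive letters $S(h_{(1)})$, $h_{(2)}$, $k$, on the right past the two letters $S(h_{(1)})$, $h_{(2)} k$. The first observation is that, by the left multiplicativity \eqref{eq:leftmult} of $R$, sliding a letter past the product $h_{(2)} k$ produces exactly the same string of $R$-superscripts on that letter --- and on $h_{(2)}$ and $k$ individually --- as sliding it first past $k$ and then past $h_{(2)}$; iterating this over $u_1, \dots, u_m$ shows that the two sides carry the \emph{same} $U_{par}$-tensorand, and that their $H_{par}$-tensorands are respectively $[S(h_{(1)})^{a}]\,[h_{(2)}^{b}]\,[k^{c}]$ and $[S(h_{(1)})^{a}]\,[h_{(2)}^{b}\,k^{c}]$, with matching superscript strings $a,b,c$ (coming from the successive passages of $u_1, \dots, u_m$). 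Thus the identity is reduced to an equality of the shape $[S(g_{(1)})][g_{(2)}][\ell] = [S(g_{(1)})][g_{(2)}\ell]$ in $H_{par}$.

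The second observation is that this reduced equality is precisely axiom \ref{PR5} for $[-] : H \to H_{par}$, provided the superscripted letters $S(h_{(1)})^{a}$ and $h_{(2)}^{b}$ really do assemble into $S(g_{(1)})$ and $g_{(2)}$ for a single $g \in H$. This is where \eqref{eq:Rcoalg} enters: since $S(h_{(1)})$ and $h_{(2)}$ are an antipode and a comultiplication-half of the same element $h$, the copies of $R$ that each $u_i$ drags across them can be commuted through the comultiplication of $h$ (using \eqref{eq:Rcoalg} read from right to left, together with \eqref{eq:Striangle1}--\eqref{eq:Striangle2} to pull the antipode through), so that after all $u_i$ have been slid across, what survives on the $H$-side is exactly the comultiplication of the element $g$ obtained from $h$ by applying those $R$'s. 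Applying \ref{PR5} then closes the first identity.

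The remaining three are proved in the same manner. The second identity fuses $k$ with $S(h_{(1)})$ and is identical word for word except that \ref{PR4} replaces \ref{PR5}. The third and fourth concern the $U$-slot: here one slides the block $h_1 \otimes \cdots \otimes h_n$ in the opposite direction and uses the \emph{right} multiplicativity \eqref{eq:rightmult} of $R$ to see that the $H_{par}$-tensorand is untouched, after which \eqref{eq:Rcoalg} reorganises the $U_{par}$-tensorand into the form of axiom \ref{PR3}, respectively \ref{PR2}, for $[-] : U \to U_{par}$. The one genuinely delicate point in any of the four arguments is the bookkeeping of the roughly $nm$ copies of $R$ and the repeated appeals to \eqref{eq:Rcoalg} and \eqref{eq:Striangle1}--\eqref{eq:Striangle2}; I expect the cleanest way to keep this under control is an induction on $m$ (respectively $n$), peeling one letter off the sliding block at a time and invoking the coalgebra-map property exactly once per step.
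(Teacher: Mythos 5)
Your proposal is correct and follows essentially the same route as the paper: expand $\hat{\mathcal{R}}$, use left (resp.\ right) multiplicativity of $R$ to match the $U_{par}$-tensorands and fuse the superscript strings, and then recognise the superscripted pair $S(h_{(1)})^{a}\otimes h_{(2)}^{b}$ as $S(g_{(1)})\otimes g_{(2)}$ for a single element $g$ --- which is precisely the content of the paper's \cref{le:W}, itself derived from \eqref{eq:Rcoalg} and \eqref{eq:Striangle1}--\eqref{eq:Striangle2} --- so that \ref{PR2}--\ref{PR5} apply. The only (immaterial) difference is the order of operations: the paper applies \eqref{eq:W1} first and left multiplicativity last, while you do the reverse.
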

\begin{proof}
    Let us check the first equality, the others are shown in a similar way. We have 
	\begin{align*}
		&\!\!\!\!\!\!\!\hat{\mathcal{R}}((S(h_{(1)}) \otimes  h_{(2)} \otimes k) \x (u_1 \otimes \cdots \otimes u_m)) \\ & \stackrel{\mathmakebox[\widthof{   =   }]{}}{=} [u_1^{R_{11} R_{12} R_{13}}] \cdots [u_m^{R_{m1} R_{m2} R_{m3}}] \otimes [S(h_{(1)})^{R_{13} \cdots R_{m3}}][{h_{(2)}}^{R_{12} \cdots R_{m2}}] [k^{R_{11} \cdots R_{m1}}] \\
		&\stackrel{\mathmakebox[\widthof{   =   }]{\eqref{eq:W1}}}{=} [{u_{1}^{R_{11}}}_{(1)}] \cdots [{u_{m}^{R_{m1}}}_{(1)}] \\ & \qquad \qquad \otimes [S((h \triangleleft {u_{1}^{R_{11}}}_{(2)} \cdots {u_{m}^{R_{m1}}}_{(2)})_{(1)})] [(h \triangleleft {u_{1}^{R_{11}}}_{(2)} \cdots {u_{m}^{R_{m1}}}_{(2)})_{(2)})] [k^{R_{11} \cdots R_{m1}}] \\
		&\stackrel{\mathmakebox[\widthof{   =   }]{\ref{PR5}}}{=} [{u_{1}^{R_{11}}}_{(1)}] \cdots [{u_{m}^{R_{m1}}}_{(1)}] \\ & \qquad \qquad \otimes [S((h \triangleleft {u_{1}^{R_{11}}}_{(2)} \cdots {u_{m}^{R_{m1}}}_{(2)})_{(1)})] [(h \triangleleft {u_{1}^{R_{11}}}_{(2)} \cdots {u_{m}^{R_{m1}}}_{(2)})_{(2)})k^{R_{11} \cdots R_{m1}}] \\
		&\stackrel{\mathmakebox[\widthof{   =   }]{}}{=} [{u_1}^{R_{11} R_{12} R_{13}}] \cdots [{u_m}^{R_{m1} R_{m2} R_{m3}}] \otimes [S(h_{(1)})^{R_{13} \cdots R_{m3}}][{h_{(2)}}^{R_{12} \cdots R_{m2}}k^{R_{11} \cdots R_{m1}}] \\
		&\stackrel{\mathmakebox[\widthof{   =   }]{\eqref{eq:leftmult}}}{=} [{u_1}^{R_{12} R_{13}}] \cdots [{u_m}^{R_{m2} R_{m3}}] \otimes [S(h_{(1)})^{R_{13} \cdots R_{m3}}][(h_{(2)}k)^{R_{12} \cdots R_{m2}}] \\
		&\stackrel{\mathmakebox[\widthof{   =   }]{}}{=} \hat{\mathcal{R}}((S(h_{(1)}) \otimes  h_{(2)} k) \x (u_1 \otimes \cdots \otimes u_m)). \qedhere
	\end{align*}
\end{proof}
\begin{lemma}
	\label{le:mathcalR}
	The map \(\hat{\mathcal{R}}\) induces a normal and multiplicative map
 \begin{align*}
     \mathcal{R} : H_{par} \otimes U_{par} &\to U_{par} \otimes H_{par}, \\
     [h_1] \cdots [h_n] \otimes [u_1] \cdots [u_m] &\mapsto [{u_1}^{R_{11} \cdots R_{1n}}] \cdots [{u_m}^{R_{m1} \cdots R_{mn}}] \otimes [{h_1}^{R_{1n} \cdots R_{mn}}] \cdots [{h_n}^{R_{11} \cdots R_{m1}}].
 \end{align*}
\end{lemma}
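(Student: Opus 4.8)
The plan is to show that $\hat{\mathcal{R}}$ annihilates the relations cutting out $H_{par} \otimes U_{par}$ as a quotient of $T(H) \otimes T(U)$, so that it descends to the asserted map $\mathcal{R}$, and then to read off normality and multiplicativity of $\mathcal{R}$ from the resulting formula. Write $m_H : T(H) \to H_{par}$ and $m_U : T(U) \to U_{par}$ for the canonical algebra surjections extending $[-]$, and let $\sigma : T(H) \otimes T(U) \to T(U) \otimes T(H)$ be the total slide that, on a simple tensor $(h_1 \otimes \cdots \otimes h_n) \otimes (u_1 \otimes \cdots \otimes u_m)$, iterates $R$ until every $u_j$ lies to the left of every $h_i$; this is a fixed composite of copies of $R$, hence well defined, and by construction $\hat{\mathcal{R}} = (m_U \otimes m_H) \circ \sigma$. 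Since we are over a field, $\ker(m_U \otimes m_H) = \ker m_U \otimes T(H) + T(U) \otimes \ker m_H$, so it is enough to prove that $\sigma$ sends $\ker m_H \otimes T(U) + T(H) \otimes \ker m_U$ into $\ker(m_U \otimes m_H)$. Recall that $\ker m_H$ is the two-sided ideal of $T(H)$ generated by $1_H - 1_{T(H)}$ and by the relations \eqref{eq:Hparrelation2}--\eqref{eq:Hparrelation3}, which by \cite[Lemma 3.3]{ABCQVparcorep} may be replaced by the $(\mathrm{PR}4)$- and $(\mathrm{PR}5)$-type relations appearing in \cref{le:Rhat}; similarly for $\ker m_U$.

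First I would dispose of the unit relations using normality of $R$: if a letter in the first tensorand equals $1_H$, right normality of $R$ shows it passes through $\sigma$ unchanged while the $u_j$'s cross it unaffected, so after applying $m_H$ it becomes $1_{H_{par}}$ and may be deleted; a letter $1_U$ in the second tensorand is handled symmetrically by left normality. For the $(\mathrm{PR}4)/(\mathrm{PR}5)$-type generators, the case where the generator is not embedded in a longer word is exactly \cref{le:Rhat}, and the general case reduces to it by peeling off the surrounding letters. The letters to the right of the generator are slid through first; because left and right multiplicativity of $R$ imply that sliding a $U$-letter past a product of $H$-letters is insensitive to how those $H$-letters are grouped, the $U$-letters and the modified right context produced so far agree on both sides of the relation, so arguing as in the proof of \cref{le:Rhat} with these modified $U$-letters shows that the two slid copies of the generator differ only by an instance of the same relation. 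The letters to the left are then slid through last; since the intermediate $U$-letters still agree on both sides, this multiplies both sides by the same factors, so the difference stays inside the two-sided ideal $\ker m_H$. Hence $\sigma$ carries every defining relation into $\ker m_U \otimes T(H) + T(U) \otimes \ker m_H = \ker(m_U \otimes m_H)$, and $\mathcal{R}$ is well defined with the stated formula.

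Normality and multiplicativity of $\mathcal{R}$ are then immediate from the formula. Taking the relevant letter to be $1_U$ (resp.\ $1_H$) and using left (resp.\ right) normality of $R$ gives $\mathcal{R}(\xi \otimes 1_{U_{par}}) = 1_{U_{par}} \otimes \xi$ for $\xi \in H_{par}$ and $\mathcal{R}(1_{H_{par}} \otimes \zeta) = \zeta \otimes 1_{H_{par}}$ for $\zeta \in U_{par}$. Since $\mathcal{R}$ is itself a total slide, sliding $[u_1]\cdots[u_m]$ past a concatenated word $[h_1]\cdots[h_n][h'_1]\cdots[h'_{n'}]$ equals sliding it past $[h'_1]\cdots[h'_{n'}]$ and then past $[h_1]\cdots[h_n]$ — this is left multiplicativity of $\mathcal{R}$ — and sliding a concatenated word $[u_1]\cdots[u_m][v_1]\cdots[v_{m'}]$ past $[h_1]\cdots[h_n]$ equals sliding $[u_1]\cdots[u_m]$ past it and then sliding $[v_1]\cdots[v_{m'}]$ past the result, which by iterated right multiplicativity of $R$ is right multiplicativity of $\mathcal{R}$.

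I expect the only genuinely delicate point to be the well-definedness of $\mathcal{R}$ in the presence of surrounding letters: the key is that, by multiplicativity of $R$, the $U$-components produced along the slide do not remember the grouping of the $H$-letters, so the rewriting of \cref{le:Rhat} propagates through arbitrary words, everything else being bookkeeping on top of the normality and multiplicativity of $R$.
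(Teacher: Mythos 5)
Your proposal is correct and follows essentially the same route as the paper: one establishes that \(\hat{\mathcal{R}}\) (equivalently, the total slide) is left and right multiplicative, combines this with \cref{le:Rhat} to see that the two-sided ideals defining \(H_{par}\) and \(U_{par}\) are annihilated — units being handled by normality of \(R\), and generators embedded in longer words by peeling off the surrounding letters — and then reads normality and multiplicativity of the induced map \(\mathcal{R}\) off the same computation. Your explicit factorization through \(\sigma\) and the kernel decomposition over a field are only presentational differences, and you in fact spell out the peeling step more carefully than the paper does.
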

\begin{proof}
    Let us first check that \(\hat{\mathcal{R}}\) is multiplicative in the sense of \cref{def:multiplicative}. 
\begin{align*}
		& \!\!\!\!\!\!\! \hat{\mathcal{R}}((h_1 \otimes \cdots \otimes h_n \otimes h'_1 \otimes \cdots \otimes h'_{n'}) \otimes (u_1 \otimes \cdots \otimes u_m)) \\
		&= [u_1^{R_{11} \cdots R_{1, n + n'}}] \cdots [u_m^{R_{m1} \cdots R_{m, n+ n'}}] \\ 
  & \qquad \qquad \otimes [h_1^{R_{1, n + n'} \cdots R_{m, n + n'}}] \cdots [h_n^{R_{1,n' + 1} \cdots R_{m,n' + 1}}] [{h'_1}^{R_{1n'} \cdots R_{mn'}}] \cdots [{h'_{n'}}^{R_{11} \cdots R_{m1}}] \\
		&= (u_1 \otimes \cdots \otimes u_m)^{\mathcal{R}' \mathcal{R}} \otimes (h_1 \otimes \cdots \otimes h_n)^\mathcal{R} (h'_1 \otimes \cdots  \otimes h'_{n'})^{\mathcal{R}'},
	\end{align*}
 and a similar computation shows that it is right multiplicative. Together with \cref{le:Rhat}, this shows that the kernel of \(\hat{\mathcal{R}}\) contains \(\mathcal{I}_H \otimes T(U) + T(H) \otimes \mathcal{I}_U\), where \(\mathcal{I}_H\) and \(\mathcal{I}_U\) are the ideals defined by the relations \eqref{eq:Hparrelation1}, \eqref{eq:Hparrelation2} and \eqref{eq:Hparrelation3}, i.\,e.\ \(H_{par} = T(H)/\mathcal{I}_H\) and \(U_{par} = T(U)/\mathcal{I}_U\). 
Hence \(\hat{\mathcal{R}}\) induces a linear map \(\mathcal{R} : H_{par} \otimes U_{par} \to U_{par} \otimes H_{par}\). The computation above shows that it is multiplicative, and the normality is obvious too.    
\end{proof}

By \cref{th:smash}, we obtain an algebra \(U_{par} \#_{\mathcal{R}} H_{par}\) and we are ready to state our main result. 

\begin{theorem}
	\label{th:isosP}
	Let \(U, H\) be Hopf algebras with invertible antipode and \(R : H \otimes U \to U \otimes H\) be normal, multiplicative and a coalgebra map. Then there are isomorphisms of categories
	\[\mathcal{P}^R_{U, H} \simeq \PRep^{\mathrm{split}}_{U \#_R H} \simeq \Rep_{U_{par} \#_{\mathcal{R}} H_{par}}.\]
\end{theorem}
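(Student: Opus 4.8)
The plan is to construct the two isomorphisms separately, building the functors in both directions on objects and then checking they are mutually inverse; morphisms will be handled for free since in each case a morphism is just an algebra map $f : B \to B'$ intertwining the relevant structure maps, and all the functors act as the identity on $B$.

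For the first isomorphism $\mathcal{P}^R_{U,H} \simeq \PRep^{\mathrm{split}}_{U\#_R H}$, start with an object $(\alpha, \eta)$ of $\mathcal{P}^R_{U,H}$ and define $\pi : U \#_R H \to B$ by $\pi(u \#_R h) = \alpha(u)\eta(h)$. One must verify that $\pi$ is a partial representation satisfying \eqref{eq:split1} and \eqref{eq:split2}; \eqref{eq:split1} holds by definition and \eqref{eq:split2} is exactly the compatibility condition \eqref{eq:partial.pairs}, so the real work is checking axioms \ref{PR1}--\ref{PR3} (or the equivalent \ref{PR4}, \ref{PR5}). Here one uses \cref{le:algebra_inclusions} to write $u \#_R h = \iota_U(u)\iota_H(h)$, the fact that the antipode of $U \#_R H$ is \eqref{eq:antipode_smash}, and the coalgebra-map property \eqref{eq:Rcoalg} of $R$ together with \eqref{eq:Rbyaction} to push all occurrences of $R$ through the comultiplications; the partial-representation axioms for $\alpha$ and $\eta$ and the commutation relation then close the computation. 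Conversely, given $\pi$ in $\PRep^{\mathrm{split}}_{U\#_R H}$, set $\alpha = \pi \circ \iota_U$ and $\eta = \pi \circ \iota_H$. Since $\iota_U$ and $\iota_H$ are algebra maps (\cref{le:algebra_inclusions}) and $\Delta$, $S$ on $U \#_R H$ restrict appropriately, $\alpha$ and $\eta$ are partial representations; the relation \eqref{eq:partial.pairs} is \eqref{eq:split2}; and \eqref{eq:split1} guarantees that the two assignments are mutually inverse since $\pi(u \#_R h) = \pi(\iota_U(u))\pi(\iota_H(h)) = \alpha(u)\eta(h)$.

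For the second isomorphism $\PRep^{\mathrm{split}}_{U\#_R H} \simeq \Rep_{U_{par}\#_{\mathcal{R}} H_{par}}$, the natural route is to go through the first isomorphism: an object of $\mathcal{P}^R_{U,H}$ is a pair of partial representations $(\alpha,\eta)$, which by \cref{th:universal_property} corresponds to a pair of algebra maps $(\hat{\alpha} : U_{par} \to B, \hat{\eta} : H_{par} \to B)$ with $\alpha = \hat{\alpha}\circ[-]$ and $\eta = \hat{\eta}\circ[-]$, and the compatibility \eqref{eq:partial.pairs} translates, via the explicit description of $\mathcal{R}$ in \cref{le:mathcalR}, into $\hat\eta(x)\hat\alpha(y) = \hat\alpha(y^{\mathcal R})\hat\eta(x^{\mathcal R})$ for all $x \in H_{par}$, $y \in U_{par}$ (it suffices to check this on generators $[h]$ and $[u]$, where it reduces exactly to \eqref{eq:partial.pairs}). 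By the universal property of the smash product $U_{par}\#_{\mathcal R} H_{par}$ — namely, that an algebra map out of it is the same as a pair of algebra maps out of $U_{par}$ and $H_{par}$ satisfying the $\mathcal R$-commutation relation, which follows from \cref{le:algebra_inclusions} applied to $U_{par}\#_{\mathcal R} H_{par}$ together with multiplicativity of $\mathcal R$ — such a pair is the same as an algebra map $U_{par}\#_{\mathcal R} H_{par} \to B$, i.e.\ an object of $\Rep_{U_{par}\#_{\mathcal R}H_{par}}$. Chasing morphisms through is routine.

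The main obstacle I expect is the first verification: showing that $\pi(u \#_R h) = \alpha(u)\eta(h)$ satisfies the partial-representation axioms. Axiom \ref{PR2} for $\pi$ involves $\Delta_{U\#_R H}$ and $S_{U\#_R H}$ applied to an element $u' \#_R h'$, producing a sum with several occurrences of $R$ entangled with the comultiplications of $u'$ and $h'$; unwinding this using \eqref{eq:Rcoalg}, the explicit antipode \eqref{eq:antipode_smash}, and the identities of \cref{le:W} — and then reorganizing so that the partial-representation axioms of $\alpha$ alone and of $\eta$ alone, interleaved with the commutation relation \eqref{eq:partial.pairs}, can be applied — is the genuinely delicate bookkeeping. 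It is essentially the same kind of computation as in the proof of \cref{le:Rhat}, so the techniques are in place, but getting the $R$-indices and Sweedler legs to line up is where care is needed. One should also be careful that \eqref{eq:split2} is precisely what is needed to move an $\eta$ past an $\alpha$ when the smash-product multiplication forces the swapped order, and that the invertibility of $R$ (hence of the antipodes) is used only where the equivalence genuinely requires it, consistently with the hypotheses of \cref{th:Rsmash}.
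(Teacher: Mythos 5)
Your proposal follows essentially the same route as the paper: the first isomorphism is realized by $(\alpha,\eta)\mapsto\pi_{\alpha,\eta}(u\#_R h)=\alpha(u)\eta(h)$ and $\pi\mapsto(\pi\circ\iota_U,\pi\circ\iota_H)$, with the genuine work being the verification of \ref{PR4}/\ref{PR5} via the inverse braiding $R'$ (whence the invertible-antipode hypothesis) and the identities of type \eqref{eq:W'}, exactly as you anticipate; the second isomorphism is the same correspondence the paper writes explicitly as $\sigma_{\alpha,\eta}([u_1]\cdots[u_m]\otimes[h_1]\cdots[h_n])=\alpha(u_1)\cdots\alpha(u_m)\eta(h_1)\cdots\eta(h_n)$, which your universal-property phrasing repackages. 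The only cosmetic difference is that the paper composes $\mathcal{P}^R_{U,H}\simeq\Rep_{U_{par}\#_{\mathcal R}H_{par}}$ directly rather than through $\PRep^{\mathrm{split}}_{U\#_R H}$, and uses the $R'$-analogue of \cref{le:W} rather than \cref{le:W} itself.
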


\begin{proof}
	We start with the first isomorphism.
	Let \(\pi : U \#_R H \to B\) be a partial representation such that \(\pi(u \#_R h) = \pi(u \#_R 1_H) \pi(1_U \#_R h)\) and \(\pi(u^R \#_R h^R) = \pi(1_U \#_R h)\pi(u \#_R 1_H)\) for all \(u \#_R h \in U \#_R H\). Then 
	\begin{align*}
		\alpha_\pi : U \to B : u \mapsto \pi(u \#_R 1_H) \\
		\eta_\pi : H \to B : h \mapsto \pi(1_U \#_R h)
	\end{align*}
	are partial representations, because the canonical inclusions \(\iota_U : U \to U \#_R H\) and \(\iota_H : H \to U \#_R H\) are morphisms of Hopf algebras. Moreover, 
	\begin{align*}
	    \alpha_\pi(u^R) \eta_\pi(h^R) &= \pi(u^R \#_R 1_H) \pi(1_U \#_R h^R) = \pi(u^R \#_R h^R) \\ &= \pi(1_U \#_R h) \pi(u \#_R 1_H) = \eta_\pi(h) \alpha_\pi(u).
	\end{align*}
	
	Conversely, let \(\alpha : U \to B\) and \(\eta : H \to B\) be partial representations such that \(\eta(h) \alpha(u) = \alpha(u^R) \eta(h^R)\). We will show that
	\[\pi_{\alpha, \eta} : U \#_R H \to B : u \#_R h \mapsto \alpha(u) \eta(h)\]
	is a partial representation. Recall that \(R\) is invertible with inverse \(R'\) \eqref{eq:R'}. So we have also that
	\begin{equation}
		\label{eq:R'alphaeta}
		\alpha(u) \eta(h) = \eta(h^{R'}) \alpha(u^{R'})
	\end{equation} 
	for all \(u \in U, h \in H\).
	Then,
    \begingroup
    \allowdisplaybreaks
	\begin{align*}
		\pi_{\alpha, \eta}&(v\#_R k)\pi_{\alpha, \eta}(S(u_{(1)})^R\#_R S(h_{(1)})^R)\pi_{\alpha, \eta}(u_{(2)}\#_R h_{(2)}) \\
		&\stackrel{\mathmakebox[\widthof{   =   }]{}}{=} \alpha(v)\eta(k)\alpha(S(u_{(1)})^R)\eta(S(h_{(1)})^R) \alpha(u_{(2)})\eta(h_{(2)}) \\
		&\stackrel{\mathmakebox[\widthof{   =   }]{ \eqref{eq:partial.pairs}}}{=} \alpha(v)\eta(k)\eta(S(h_{(1)}))\alpha(S(u_{(1)})) \alpha(u_{(2)})\eta(h_{(2)}) \\
		&\stackrel{\mathmakebox[\widthof{   =   }]{ \eqref{eq:R'alphaeta}}}{=} \alpha(v)\eta(k)\eta(S(h_{(1)})) \eta({h_{(2)}}^{R'r'})\alpha(S(u_{(1)})^{r'}) \alpha({a_{(2)}}^{R'}) \\
		&\stackrel{\mathmakebox[\widthof{   =   }]{\eqref{eq:W'}}}{=} \alpha(v)\eta(k)\eta(S(h_{(1)})) \eta({h_{(2)}})\alpha(S((u \blacktriangleleft h_{(3)})_{(1)})) \alpha({(u \blacktriangleleft h_{(3)})_{(2)}}) \\
		&\stackrel{\mathmakebox[\widthof{   =   }]{\ref{PR4}}}{=} \alpha(v)\eta(kS(h_{(1)})) \eta({h_{(2)}})\alpha(S((u \blacktriangleleft h_{(3)})_{(1)})) \alpha({(u \blacktriangleleft h_{(3)})_{(2)}}) \\
		&\stackrel{\mathmakebox[\widthof{   =   }]{ \eqref{eq:R'alphaeta}}}{=} \eta((kS(h_{(1)}))^{R'}) \eta({h_{(2)}}^{r'})\alpha(v^{R'r'})\alpha(S((u \blacktriangleleft h_{(3)})_{(1)})) \alpha({(u \blacktriangleleft h_{(3)})_{(2)}}) \\
		&\stackrel{\mathmakebox[\widthof{   =   }]{\ref{PR4}}}{=} \eta((kS(h_{(1)}))^{R'}) \eta({h_{(2)}}^{r'})\alpha(v^{R'r'}S((u \blacktriangleleft h_{(3)})_{(1)})) \alpha({(u \blacktriangleleft h_{(3)})_{(2)}}) \\
		&\stackrel{\mathmakebox[\widthof{   =   }]{ \eqref{eq:partial.pairs}}}{=} \alpha([b^{R'r'}S((u \blacktriangleleft h_{(3)})_{(1)})]^{rR}) \eta((kS(h_{(1)}))^{R'R}) \eta({h_{(2)}}^{r'r}) \alpha({(u \blacktriangleleft h_{(3)})_{(2)}}) \\
		&\stackrel{\mathmakebox[\widthof{   =   }]{\eqref{eq:rightmult}}}{=} \alpha(v^{R'r'r_1R_1}S((u \blacktriangleleft h_{(3)})_{(1)})^{r_2R_2}) \eta((kS(h_{(1)}))^{R'R_1R_2}) \eta({h_{(2)}}^{r'r_1r_2}) \alpha({(u \blacktriangleleft h_{(3)})_{(2)}}) \\
		&\stackrel{\mathmakebox[\widthof{   =   }]{}}{=} \alpha(vS((u \blacktriangleleft h_{(3)})_{(1)})^{rR}) \eta((kS(h_{(1)}))^{R}) \eta({h_{(2)}}^{r}) \alpha({(u \blacktriangleleft h_{(3)})_{(2)}}) \\
		&\stackrel{\mathmakebox[\widthof{   =   }]{\eqref{eq:W'}}}{=} \alpha(vS(u_{(1)})^{r'rR}) \eta((kS(h_{(1)}))^{R}) \eta({h_{(2)}}^{R'r'r}) \alpha({u_{(2)}}^{R'}) \\
        &\stackrel{\mathmakebox[\widthof{   =   }]{}}{=} \alpha(vS(u_{(1)})^{R}) \eta((kS(h_{(1)}))^{R}) \eta({h_{(2)}}^{R'}) \alpha({u_{(2)}}^{R'}) \\
		&\stackrel{\mathmakebox[\widthof{   =   }]{ \eqref{eq:R'alphaeta}}}{=} \alpha(vS(u_{(1)})^R)\eta((kS(h_{(1)}))^R)\alpha(u_{(2)})\eta(h_{(2)}) \\
		&\stackrel{\mathmakebox[\widthof{   =   }]{\eqref{eq:leftmult}}}{=} \alpha(vS(u_{(1)})^{Rr})\eta(k^rS(h_{(1)})^R) \alpha(u_{(2)})\eta(h_{(2)}) \\
		&\stackrel{\mathmakebox[\widthof{   =   }]{}}{=} \pi_{\alpha, \eta}(v(S(u_{(1)})^{Rr}\#_R k^rS(h_{(1)})^R)) \pi_{\alpha, \eta}(u_{(2)}\#_R h_{(2)}) \\
		&\stackrel{\mathmakebox[\widthof{   =   }]{}}{=} \pi_{\alpha, \eta}((v\#_R k)(S(u_{(1)})^R\#_R S(h_{(1)})^R))\pi(u_{(2)}\#_R h_{(2)}),
	\end{align*}
    \endgroup
	proving \ref{PR4}. Using similar steps, one shows that \(\pi_{\alpha, \eta}\) also satisfies \ref{PR5}. 
		
	This gives indeed a bijective correspondance because \(\pi(u \#_R h) = \pi(u \#_R 1_H) \pi(1_U \#_R h)\) by assumption. Functoriality is clear. 
	
	Now the isomorphism between \(\mathcal{P}^R_{U, H}\) and \(\Rep_{U_{par} \#_{\mathcal{R}} H_{par}}\). Let \(\sigma : U_{par} \#_\mathcal{R} H_{par} \to B\) be an algebra map. We have partial representations
	\begin{align*}
		U \to U_{par} \#_\mathcal{R} H_{par} &: u \mapsto [u] \#_{\mathcal{R}} [1_H],  \\
		H \to  U_{par} \#_\mathcal{R} H_{par} &: h \mapsto [1_U] \#_{\mathcal{R}} [h]
	\end{align*}
	(this follows from the fact that \(\mathcal{R}\) is normal and that \([-] : U \to U_{par},\) resp.\ \([-] : H \to H_{par}\) are partial representations). They are compatible through \(R,\) so it is easy to see that 
	\begin{align*}
		\alpha &: U \to B : u \mapsto \sigma([u] \#_{\mathcal{R}} [1_H]), \\
		\eta &: H \to B : h \mapsto \sigma([1_U] \#_{\mathcal{R}} [h])
	\end{align*}
	constitutes an object in \(\mathcal{P}^R_{U, H}\).
	
	Conversely, let \(\alpha : U \to B, \eta : H \to B\) be  partial representations such that \(\eta(h) \alpha(u) = \alpha(u^R) \eta(h^R)\) for all \(u \in U, h \in H\). Define
	\[\sigma_{\alpha, \eta} : U_{par} \#_{\mathcal{R}} H_{par} \to B: [u_1] \cdots [u_m] \otimes [h_1] \cdots [h_n] \mapsto \alpha(u_1) \cdots \alpha(u_m) \eta(h_1) \cdots \eta(h_n).\]
	It is an easy verification that this is an algebra morphism, and that the obtained correspondence is bijective. 
\end{proof}

\begin{remark}
	In general, \(\PRep_{U \#_R H}^{\mathrm{split}}\) is a proper subcategory of \(\PRep_{U \#_R H}\), and there might be many partial representations of \(U \#_R H\) that do not satisfy \eqref{eq:split1} and \eqref{eq:split2}. As an elementary example, let \(C_2 = \{1, g\}\) be the cyclic group of order two and \(G = C_2 \times C_2\). Let \(k\) be a field of characteristic different from 2. Then \(kG \cong kC_2 \otimes kC_2\), which is an \(R\)-smash product with trivial \(R\) (i.\,e.\ both actions \(\triangleleft,\, \triangleright\) are trivial, which means that \(R\) is just the twist map). However \(\dim k_{par} G = 20\) (see \cite{DEP}) while \(\dim kC_{2,par} = 3,\) so that \(\dim kC_{2,par} \otimes kC_{2,par} = 9\). Concretely, the 3-dimensional irreducible partial representation of \(G\) is not in \(\PRep_{kC_2 \otimes kC_2}^{\mathrm{split}}\) as well as two 1-dimensional partial representations obtained from global representations of the diagonal subgroup \(\{(1, 1), (g, g)\}\) of \(G\).
\end{remark}

The category \(\mathcal{P}_{U, H}^R\) has an interesting subcategory: the category \(\mathcal{Q}_{U, H}^R\) of pairs \((\alpha, \eta)\) where \(\alpha : U \to B\) is an algebra map and \(\eta : H \to B\) is a partial representation such that \(\eta(h) \alpha(u) = \alpha(u^R) \eta(h^R)\). Under the isomorphisms of categories from \cref{th:isosP},  \(\mathcal{Q}_{U, H}^R\) corresponds to the category \(\PRep^{\mathrm{glob}}_{U \#_R H}\) of partial representations \(\pi : U \#_R H \to B\) such that \
\begin{equation}
	\label{eq:piAmult}
	\pi(uu' \#_R 1_H) = \pi(u \#_R 1_H) \pi(u' \#_R 1_H).
\end{equation} If \(\pi\) satisfies \eqref{eq:piAmult}, then 
(using that \(S(u \#_R 1_H) = R(1_H \#_R S(u)) = S(u) \#_R 1_H\)),
\begin{align*}
	\pi(u \#_R 1_H) \pi(1_U \#_R h) &= \pi(u_{(1)} \#_R 1_H) \pi(S(u_{(2)}) \#_R 1_H) \pi(u_{(3)} \#_R 1_H) \pi(1_U \#_R h) \\
	&= \pi(u_{(1)} \#_R 1_H) \pi(S(u_{(2)}) \#_R 1_H) \pi(u_{(3)} \#_R h) \\
	&= \pi(u \#_R h)
\end{align*}
and
\begin{align*}
	\pi(1_U \#_R h)\pi(u \#_R 1_H) &= \pi(1_U \#_R h) \pi(u_{(1)} \#_R 1_H) \pi(S(u_{(2)}) \#_R 1_H) \pi(u_{(3)} \#_R 1_H) \\
	&= \pi({a_{(1)}}^R \#_R h^R) \pi(S(u_{(2)}) \#_R 1_H) \pi(u_{(3)} \#_R 1_H) \\
	&= \pi(u^R \#_R h^R),
\end{align*}
so \(\PRep^{\mathrm{glob}}_{U \#_R H}\) is indeed a subcategory of \(\PRep^{\mathrm{split}}_{U \#_R H}\). These subcategories can also be described as the category of representations of a smash product of \(U\) and \(H_{par},\) by the local braiding
\[\mathcal{T} : H_{par} \otimes U \to U \otimes H_{par} : [h_1] \cdots [h_n] \otimes u \mapsto u^{R_1 \cdots R_n} \otimes [{h_1}^{R_n}] \cdots [{h_n}^{R_1}].\]
It can be seen from \cref{le:Rhat} and \cref{le:mathcalR} that \(\mathcal{T}\) is well-defined, normal and multiplicative. 

\begin{theorem}
	\label{th:smashglobalpartial}
	Let \(U, H\) be Hopf algebras and let \(R : H \otimes U \to U \otimes H\) be normal, multiplicative and a coalgebra map. Then there are isomorphisms of categories
	\[\mathcal{Q}_{U, H}^R \simeq \PRep_{U \#_R H}^{\mathrm{glob}} \simeq \Rep_{U \#_{\mathcal{T}} H_{par}}.\]
\end{theorem}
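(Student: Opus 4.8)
The plan is to establish the two isomorphisms in turn, leaning heavily on \cref{th:isosP} and on the discussion immediately preceding the statement.

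\textbf{First isomorphism.} The equivalence $\mathcal{Q}_{U, H}^R \simeq \PRep_{U \#_R H}^{\mathrm{glob}}$ demands essentially no new argument: it is the restriction of the isomorphism $\mathcal{P}^R_{U, H} \simeq \PRep^{\mathrm{split}}_{U \#_R H}$ constructed in \cref{th:isosP}. Under that correspondence a partial representation $\pi$ goes to the pair $(\alpha_\pi, \eta_\pi)$ with $\alpha_\pi(u) = \pi(u \#_R 1_H)$ and $\eta_\pi(h) = \pi(1_U \#_R h)$; since $\alpha_\pi(1_U) = 1_B$ automatically, $\alpha_\pi$ is an algebra map precisely when \eqref{eq:piAmult} holds, that is, precisely when $\pi \in \PRep^{\mathrm{glob}}_{U \#_R H}$. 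In the other direction $\pi_{\alpha, \eta}(u \#_R h) = \alpha(u) \eta(h)$ satisfies \eqref{eq:piAmult} as soon as $\alpha$ is an algebra map. Combined with the computation just before the statement showing $\PRep^{\mathrm{glob}}_{U \#_R H} \subseteq \PRep^{\mathrm{split}}_{U \#_R H}$, this yields the first isomorphism, with functoriality inherited from \cref{th:isosP}.

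\textbf{Second isomorphism, from representations to pairs.} By \cref{le:Rhat} and \cref{le:mathcalR}, the map $\mathcal{T} : H_{par} \otimes U \to U \otimes H_{par}$ is well defined, normal and multiplicative, so \cref{th:smash} makes $U \#_{\mathcal{T}} H_{par}$ a unital associative algebra, and by \cref{le:algebra_inclusions} the inclusions $\iota_U : U \to U \#_{\mathcal{T}} H_{par}$ and $\iota_{H_{par}} : H_{par} \to U \#_{\mathcal{T}} H_{par}$ are algebra maps with $u \#_{\mathcal{T}} w = \iota_U(u) \iota_{H_{par}}(w)$. Given an algebra map $\sigma : U \#_{\mathcal{T}} H_{par} \to B$, set $\alpha_\sigma = \sigma \circ \iota_U$ (an algebra map $U \to B$) and $\eta_\sigma = \sigma \circ \iota_{H_{par}} \circ [-]$ (a partial representation $H \to B$, being the composite of the partial representation $[-] : H \to H_{par}$ with an algebra map, cf.\ \cref{th:universal_property}). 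Evaluating the multiplication rule \eqref{equation.multiplication.on.AB.elements} on $(1_U \#_{\mathcal{T}} [h])(u \#_{\mathcal{T}} 1_{H_{par}}) = u^R \#_{\mathcal{T}} [h^R]$ and applying $\sigma$ gives $\eta_\sigma(h) \alpha_\sigma(u) = \alpha_\sigma(u^R) \eta_\sigma(h^R)$, so $(\alpha_\sigma, \eta_\sigma) \in \mathcal{Q}^R_{U, H}$.

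\textbf{Second isomorphism, from pairs to representations, and inverseness.} Conversely, given $(\alpha, \eta) \in \mathcal{Q}^R_{U, H}$, \cref{th:universal_property} factors $\eta = \hat\eta \circ [-]$ for a unique algebra map $\hat\eta : H_{par} \to B$. Define $\sigma_{\alpha, \eta} : U \#_{\mathcal{T}} H_{par} \to B$ by $u \#_{\mathcal{T}} [h_1] \cdots [h_n] \mapsto \alpha(u) \eta(h_1) \cdots \eta(h_n)$; to see this respects \eqref{equation.multiplication.on.AB.elements} one must check that $\hat\eta(w) \alpha(u') = \alpha((u')^{\mathcal{T}}) \hat\eta(w^{\mathcal{T}})$ for all $w \in H_{par}$, $u' \in U$. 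On generators $w = [h]$ this is exactly the hypothesis $\eta(h) \alpha(u') = \alpha((u')^R) \eta((u')^R)$ wait rather $\eta(h)\alpha(u') = \alpha((u')^R)\eta(h^R)$; since $\mathcal{T}$ is multiplicative, the set of $w$ satisfying this identity is a linear subspace of $H_{par}$ closed under products, hence a subalgebra, hence all of $H_{par}$ (it contains the generators $[h]$ and $1_{H_{par}}$). A routine diagram chase then shows $\sigma \mapsto (\alpha_\sigma, \eta_\sigma)$ and $(\alpha, \eta) \mapsto \sigma_{\alpha, \eta}$ are mutually inverse and natural in $B$, giving $\mathcal{Q}^R_{U, H} \simeq \Rep_{U \#_{\mathcal{T}} H_{par}}$.

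The main point requiring care is not the category-theoretic bookkeeping but the behaviour of $\mathcal{T}$: one needs that pushing $u$ past $[h_1], \dots, [h_n]$ one factor at a time in $U \#_{\mathcal{T}} H_{par}$ reproduces exactly the multi-index formula defining $\mathcal{T}$ on products, and that $\mathcal{T}$ is multiplicative so that the subalgebra argument above closes. Both are secured by \cref{le:Rhat} and \cref{le:mathcalR}, so in the end the proof is largely a transcription of the argument of \cref{th:isosP} with $U_{par}$ replaced by $U$, which I would carry out briefly and otherwise leave to the reader.
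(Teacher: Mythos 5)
Your reduction of the first isomorphism to \cref{th:isosP} has a genuine gap. \cref{th:isosP} is stated and proved under the hypothesis that $U$ and $H$ have invertible antipodes, which is what makes $R$ invertible, and its verification that $\pi_{\alpha,\eta}$ satisfies \ref{PR4} and \ref{PR5} uses the inverse $R'$ and the derived actions $\blacktriangleright,\blacktriangleleft$ at almost every step. The present theorem deliberately drops that hypothesis, so you cannot obtain $\mathcal{Q}^R_{U,H} \simeq \PRep^{\mathrm{glob}}_{U\#_R H}$ merely by restricting the isomorphism of \cref{th:isosP}. The direction $\pi \mapsto (\alpha_\pi,\eta_\pi)$, together with the inclusion $\PRep^{\mathrm{glob}}_{U \#_R H} \subseteq \PRep^{\mathrm{split}}_{U \#_R H}$ computed before the statement, indeed needs no invertibility; what must be redone is the converse, namely that for $(\alpha,\eta) \in \mathcal{Q}^R_{U,H}$ the map $\pi_{\alpha,\eta}(u \#_R h) = \alpha(u)\eta(h)$ is a partial representation. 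The paper's proof does exactly this, exploiting the extra strength of the hypothesis that $\alpha$ is an algebra map: in the verification of \ref{PR4} the factor $\alpha(S(u_{(1)}))\alpha(u_{(2)})$ collapses to $\eps(u)1_B$, after which only \ref{PR4} for $\eta$, the compatibility \eqref{eq:partial.pairs} and the left multiplicativity \eqref{eq:leftmult} of $R$ are needed --- no inverse of $R$ ever appears. As written, your argument proves the theorem only under the additional assumption of invertible antipodes; you need to supply this direct computation (or an equivalent one avoiding $R'$).

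Your treatment of the second isomorphism is essentially the paper's (which simply notes that this part of the argument of \cref{th:isosP} carries over verbatim, since it never used invertibility). The subalgebra argument for propagating the commutation relation $\hat\eta(w)\alpha(u') = \alpha((u')^{\mathcal{T}})\hat\eta(w^{\mathcal{T}})$ from the generators $[h]$ to all of $H_{par}$ is correct, granted the multiplicativity of $\mathcal{T}$ from \cref{le:mathcalR}.
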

\begin{proof}
	Let us just remark that it is not necessary to suppose that \(R\) is invertible to prove the first isomorphism. The rest of the proof is identical to the proof of \cref{th:isosP}.
	
	Let \(\alpha : U \to B\) be an algebra map and let \(\eta : H \to B\) be a partial representation such that \(\eta(h) \alpha(u) = \alpha(u^R) \eta(h^R)\). Define $\pi_{\alpha, \eta} :U\#_R H \to X$, given by $\pi_{\alpha, \eta}(u\# h) = \alpha(u)\eta(h)$.
	
	Let us show that \(\pi_{\alpha, \eta}\) satisfies \ref{PR4}, the proof for \ref{PR5} is analogous. 
	\begin{align*}
		\pi_{\alpha, \eta}(v\# k)\pi_{\alpha, \eta}(S(u_{(1)})^R\# S(h_{(1)})^R)\pi_{\alpha, \eta}(u_{(2)}\# h_{(2)}) \span \\
		\qquad \qquad \qquad &\stackrel{\mathmakebox[\widthof{   =   }]{}}{=}  \alpha(v)\eta(k)\alpha(S(u_{(1)})^R)\eta(S(h_{(1)})^R)\alpha(u_{(2)})\eta(h_{(2)}) \\
		&\stackrel{\mathmakebox[\widthof{   =   }]{}}{=} \alpha(v)\eta(k)\eta(S(h_{(1)}))\alpha(S(u_{(1)}))\alpha(u_{(2)})\eta(h_{(2)}) \\
		&\stackrel{\mathmakebox[\widthof{   =   }]{}}{=} \alpha(v)\eta(k)\eta(S(h_{(1)}))\eta(h_{(2)})\eps(u) \\
		&\stackrel{\mathmakebox[\widthof{   =   }]{\ref{PR4}}}{=} \alpha(v)\eta(kS(h_{(1)}))\eta(h_{(2)})\eps(u) \\
		&\stackrel{\mathmakebox[\widthof{   =   }]{}}{=} \alpha(v)\eta(kS(h_{(1)}))\alpha(S(u_{(1)}))\alpha(u_{(2)})\eta(h_{(2)}) \\
		&\stackrel{\mathmakebox[\widthof{   =   }]{}}{=} \alpha(v)\alpha(S(u_{(1)})^R)\eta((kS(h_{(1)}))^R)\alpha(u_{(2)})\eta(h_{(2)}) \\
		&\stackrel{\mathmakebox[\widthof{   =   }]{\eqref{eq:leftmult}}}{=} \alpha(v)\alpha((S(u_{(1)})^{Rr})\eta(k^rS(h_{(1)})^R)\alpha(u_{(2)})\eta(h_{(2)}) \\
		&\stackrel{\mathmakebox[\widthof{   =   }]{}}{=} \alpha(v(S(u_{(1)})^{Rr})\eta(k^rS(h_{(1)})^R)\alpha(u_{(2)})\eta(h_{(2)}) \\
		&\stackrel{\mathmakebox[\widthof{   =   }]{}}{=} \pi_{\alpha, \eta}(v(S(u_{(1)})^{Rr}\# k^rS(h_{(1)})^R)\pi_{\alpha, \eta}(u_{(2)}\# h_{(2)}) \\
		&\stackrel{\mathmakebox[\widthof{   =   }]{}}{=} \pi_{\alpha, \eta}((v\# k)(S(u_{(1)})^R\# S(h_{(1)})^R))\pi_{\alpha, \eta}(u_{(2)}\# h_{(2)}).
	\end{align*}
	Hence $\pi_{\alpha, \eta}$ is a partial representation of $U\#_RH$. 
\end{proof}

Suppose now that \(U = U_{par},\) i.\,e.\ every partial representation of \(U\) is global. This happens for instance if \(U\) is connected, see \cref{cor:connected}. In this case, every partial representation of \(U \#_R H\) is in \(\PRep^{\mathrm{glob}}_{U \#_R H}\). Indeed, since \(\iota_U : U\to U \#_R H \) is a Hopf algebra morphism, every partial representation \(\pi\) of \(U\#_R H\) induces a partial representation \(\alpha : u \mapsto \pi(u \#_R 1_H)\) of \(U\). But this means that \(\alpha\) is an algebra morphism because \(U = U_{par}\), so \(\pi(uu' \#_R 1_H) = \pi(u \#_R 1_H) \pi(u' \#_R 1_H)\) for all \(u, u' \in U\). We can conclude the following.

\begin{theorem}
	\label{th:smashconnected}
	Let \(U, H\) be Hopf algebras and let \(R : H \otimes U \to U \otimes H\) be normal, multiplicative and a coalgebra map. Suppose that \(U = U_{par}\). Then there are isomorphisms of categories
	\[\mathcal{Q}_{U, H}^R \simeq \PRep_{U \#_R H} \simeq \Rep_{U \#_{\mathcal{T}} H_{par}}.\]
	In particular \((U \#_R H)_{par}\cong U \#_{\mathcal{T}} H_{par}\).
\end{theorem}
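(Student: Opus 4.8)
The plan is to use the hypothesis \(U = U_{par}\) to collapse \(\PRep_{U \#_R H}\) onto its subcategory \(\PRep_{U \#_R H}^{\mathrm{glob}}\), so that \cref{th:smashglobalpartial} applies directly, and then to extract the algebra isomorphism from the universal property of \((U \#_R H)_{par}\).

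First I would show \(\PRep_{U \#_R H} = \PRep_{U \#_R H}^{\mathrm{glob}}\), which is exactly the observation made just before the theorem. Given any partial representation \(\pi : U \#_R H \to B\), precomposition with the Hopf algebra inclusion \(\iota_U : U \to U \#_R H\) (\cref{le:algebra_inclusions}) gives a partial representation \(u \mapsto \pi(u \#_R 1_H)\) of \(U\); since \(U = U_{par}\) this map is an algebra morphism, i.e.\ \(\pi\) satisfies \eqref{eq:piAmult}, so \(\pi\) lies in \(\PRep_{U \#_R H}^{\mathrm{glob}}\). Hence the two categories coincide. As \(R\) is normal, multiplicative and a coalgebra map, \cref{th:smashglobalpartial} then yields \(\mathcal{Q}_{U, H}^R \simeq \PRep_{U \#_R H}^{\mathrm{glob}} \simeq \Rep_{U \#_{\mathcal{T}} H_{par}}\) (here \(U \#_{\mathcal{T}} H_{par}\) is an algebra by \cref{th:smash}, since \(\mathcal{T}\) is normal and multiplicative), and combining with the previous identification gives the displayed chain. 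Note that, just as in \cref{th:smashglobalpartial}, neither \(R\) nor the antipodes need be invertible for this.

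For the final claim, recall from \cref{th:universal_property} that for every \(k\)-algebra \(B\) the partial representations of \(U \#_R H\) on \(B\) are in bijection, naturally in \(B\), with algebra maps \((U \#_R H)_{par} \to B\); in other words \(\PRep_{U \#_R H} \simeq \Rep_{(U \#_R H)_{par}}\) compatibly with the functor sending an object to its target algebra. The isomorphism \(\PRep_{U \#_R H} \simeq \Rep_{U \#_{\mathcal{T}} H_{par}}\) just obtained is compatible with the same functor, so \((U \#_R H)_{par}\) and \(U \#_{\mathcal{T}} H_{par}\) corepresent the same functor \(\Alg_k \to \Set\), namely \(B \mapsto \{\text{partial representations of } U \#_R H \text{ on } B\}\), and are therefore isomorphic as \(k\)-algebras by Yoneda. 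Unwinding the constructions, this isomorphism is \([u \#_R h] \mapsto u \#_{\mathcal{T}} [h]\), with inverse \(u \#_{\mathcal{T}} [h] \mapsto [u \#_R h]\).

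I do not anticipate a real obstacle: the theorem is essentially a corollary of the machinery already developed. The one point that deserves care is that all the category isomorphisms used (those of \cref{th:smashglobalpartial} and of \cref{th:universal_property}) are compatible with the forgetful functors to \(\Alg_k\), which makes the Yoneda argument legitimate; equivalently, if one prefers to exhibit the isomorphism of the last step directly, one must check that \(u \#_{\mathcal{T}} [h] \mapsto [u \#_R h]\) is a well-defined algebra map, and this reduces once more to the multiplicativity of \([-]\) on \(U \#_R 1_H\) — precisely the input \(U = U_{par}\).
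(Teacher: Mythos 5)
Your proposal is correct and follows essentially the same route as the paper: the hypothesis \(U = U_{par}\) forces every partial representation of \(U \#_R H\) to lie in \(\PRep^{\mathrm{glob}}_{U \#_R H}\) via the Hopf algebra inclusion \(\iota_U\), after which \cref{th:smashglobalpartial} gives the chain of isomorphisms. The only cosmetic difference is that the paper extracts the algebra isomorphism \((U \#_R H)_{par}\cong U \#_{\mathcal{T}} H_{par}\) by writing down the two mutually inverse algebra maps explicitly rather than invoking Yoneda, but both arguments rest on the same universal property and yield the same map \([u \#_R h] \mapsto u \#_{\mathcal{T}} [h]\).
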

Explicitly, the isomorphism \((U \#_R H)_{par}\cong U \#_{\mathcal{T}} H_{par}\) is found by applying the first isomorphism of categories to the algebra map \(U \to U \#_{\mathcal{T}} H_{par} : u \mapsto u \#_{\mathcal{T}} [1_H]\) and the partial representation \(H \to U \#_{\mathcal{T}} H_{par} : h \mapsto 1_U \#_{\mathcal{T}} [h]\). This gives a partial representation \(U \#_R H \to U \#_{\mathcal{T}} H_{par},\) and an algebra map
\[(U \#_R H)_{par} \to U \#_{\mathcal{T}} H_{par} : [u \#_R h] \mapsto u \#_\mathcal{T} [h].\]
Its inverse is
\[U \#_{\mathcal{T}} H_{par} \to (U \#_R H)_{par} : u \#_{\mathcal{T}} [h_1] \cdots [h_n] \mapsto [u \#_R h_1] \cdots [1_U \#_R h_n].\]
Indeed, thanks to the fact that \([u \#_R 1_H][u' \#_R 1_H] = [uu' \#_R 1_H]\) because \(U = U_{par}\),
\[[u_1 u_2^{R_{21}} \cdots u_n^{R_{n, n - 1} \cdots R_{n1}} \#_R h_1^{R_{21} \cdots R_{n1}}][1_U \#_R h_2^{R_{32} \cdots R_{n2}}] \cdots [1_U \#_R h_n] = [u_1 \#_R h_1] \cdots [u_n \#_R h_n].\]

\begin{remark}
    Theorems \ref{th:isosP}, \ref{th:smashglobalpartial} and \ref{th:smashconnected} can also be formulated in terms of partial modules rather than partial representations, by considering only objects which are (pairs of) partial representations on an algebra of the form \(B = \End_k(V)\) for some vector space \(V\).
\end{remark}

Suppose again that \(R\) is invertible. Having described \((U \#_R H)_{par}\) in \cref{th:smashconnected}, let us take a look at the subalgebras \(A_{par}(U \#_R H)\) and \(\tilde{A}_{par}(U \#_R H)\). Recall that \(A_{par}(U \#_R H)\) is generated by the elements \({\varepsilon}_{u \#_R h}\) \eqref{eq:epsilonh}, and that \(A_{par}(U \#_R H)\) is the base algebra of the (left) bialgebroid structure on \(U \#_R H\). The other natural subalgebra \(\tilde{A}_{par}(U \#_R H)\), which is generated by the elements \(\tilde{\varepsilon}_{u \#_R h} = [S(u_{(1)} \#_R h_{(1)})][u_{(2)} \#_R h_{(2)}]\), is opposite to \(A_{par}(U \#_R H)\) and is the base algebra of the right bialgebroid structure on \((U \#_R H)_{par}\) (see \cite[\S 4.2]{alves2015partial}). We calculate
\begin{align*}
	\varepsilon_{u \#_R h} &\stackrel{\mathmakebox[\widthof{  =  }]{}}{=} (u_{(1)} \#_{\mathcal{T}} [h_{(1)}])(S(u_{(2)})^R \#_\mathcal{T} [S(h_{(2)})^R] ) \\
	&\stackrel{\mathmakebox[\widthof{  =  }]{}}{=} u_{(1)} S(u_{(2)})^{Rr} \#_{\mathcal{T}} [{h_{(1)}}^r][S(h_{(2)})^R] \\
	&\stackrel{\mathmakebox[\widthof{  =  }]{\eqref{eq:antipode}}}{=} {u_{(1)}}^{\rho'\rho} S({u_{(2)}}^{R'})^{r} \#_{\mathcal{T}} [{h_{(1)}}^{\rho'\rho r}][S({h_{(2)}}^{R'})] \\
	&\stackrel{\mathmakebox[\widthof{  =  }]{\eqref{eq:Rcoalg}}}{=} {{u^{R'}}_{(1)}}^{\rho} S({{u^{R'}}_{(2)}})^{r} \#_{\mathcal{T}} [{{h^{R'}}_{(1)}}^{\rho r}][S({h^{R'}}_{(2)})] \\
	&\stackrel{\mathmakebox[\widthof{  =  }]{\eqref{eq:rightmult}}}{=} ({{u^{R'}}_{(1)}} S({{u^{R'}}_{(2)}}))^r \#_{\mathcal{T}} [{{h^{R'}}_{(1)}}^r][S({h^{R'}}_{(2)})] \\
	&\stackrel{\mathmakebox[\widthof{  =  }]{}}{=} \epsilon(u^{R'}) 1_U \#_\mathcal{R} \varepsilon_{h^{R'}} \\
	&\stackrel{\mathmakebox[\widthof{  =  }]{}}{=} 1_U \#_{\mathcal{T}} \varepsilon_{u \blacktriangleright h}; 
 \end{align*}
 \begin{align*}
	\tilde{\varepsilon}_{u \#_R h} &\stackrel{\mathmakebox[\widthof{  =  }]{}}{=} (S(u_{(1)})^R \#_{\mathcal{T}} [S(h_{(1)})^R])(u_{(2)} \#_{\mathcal{T}} [S(h_{(2)})]) \\
	&\stackrel{\mathmakebox[\widthof{  =  }]{}}{=} S(u_{(1)})^R {u_{(2)}}^r \#_{\mathcal{T}} [S(h_{(1)})^{Rr}][h_{(2)}] \\
	&\stackrel{\mathmakebox[\widthof{  =  }]{\eqref{eq:rightmult}}}{=} (S(u_{(1)}) u_{(2)})^R \#_{\mathcal{T}} [S(h_{(1)})^R][h_{(2)}] \\
	&\stackrel{\mathmakebox[\widthof{  =  }]{}}{=} \epsilon(u) 1_U \#_{\mathcal{T}} \tilde{\varepsilon}_h.
\end{align*}

\begin{proposition}
	The assignments
	\begin{equation*}
		\alpha : \varepsilon_{u \#_R h} \mapsto \varepsilon_{u \blacktriangleright h}; \qquad \qquad
		\alpha' : \varepsilon_h \mapsto \varepsilon_{1 \#_R h}
	\end{equation*}
	provide an isomorphism of algebras \(A_{par}(U \#_R H) \cong A_{par}(H)\), and the assignments
	\begin{equation*}
		\beta : \tilde{\varepsilon}_{u \#_R h} \mapsto \epsilon(u) \tilde{\varepsilon}_{h}; \qquad \qquad
		 \beta' : \tilde{\varepsilon}_h \mapsto \tilde{\varepsilon}_{1 \#_R h}
	\end{equation*}
	provide an isomorphism of algebras \(\tilde{A}_{par}(U \#_R H) \cong \tilde{A}_{par}(H)\).
\end{proposition}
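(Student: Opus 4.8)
The plan is to read off the entire statement from the two displayed computations immediately preceding it, once these are combined with \cref{th:smashconnected} and the elementary structure of the smash product $U \#_{\mathcal{T}} H_{par}$. Recall that \cref{th:smashconnected} provides an algebra isomorphism $(U \#_R H)_{par} \cong U \#_{\mathcal{T}} H_{par}$ under which $[u \#_R h] \mapsto u \#_{\mathcal{T}} [h]$, and that by \cref{le:algebra_inclusions} applied to $U \#_{\mathcal{T}} H_{par}$ the inclusion $j : H_{par} \to U \#_{\mathcal{T}} H_{par}$, $x \mapsto 1_U \#_{\mathcal{T}} x$, is an algebra map; it is moreover injective since $U \#_{\mathcal{T}} H_{par} = U \otimes H_{par}$ as a vector space. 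Hence $j$ restricts to injective algebra maps on the subalgebras $A_{par}(H)$ and $\tilde{A}_{par}(H)$ of $H_{par}$.

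First I would check that, under the isomorphism of \cref{th:smashconnected}, the subalgebra $A_{par}(U \#_R H)$ of $(U \#_R H)_{par}$ is carried exactly onto $j(A_{par}(H))$. One inclusion is the displayed identity $\varepsilon_{u \#_R h} = 1_U \#_{\mathcal{T}} \varepsilon_{u \blacktriangleright h} = j(\varepsilon_{u \blacktriangleright h})$, which puts every generator of $A_{par}(U \#_R H)$ into $j(A_{par}(H))$. For the reverse inclusion I would note that $S(1_U \#_R h) = 1_U \#_R S(h)$ by \eqref{eq:antipode_smash} and left normality of $R$, whence (under the identification above) $\varepsilon_{1_U \#_R h} = [1_U \#_R h_{(1)}][1_U \#_R S(h_{(2)})] = j\bigl([h_{(1)}][S(h_{(2)})]\bigr) = j(\varepsilon_h)$; since the $\varepsilon_h$ generate $A_{par}(H)$, this gives $j(A_{par}(H)) \subseteq A_{par}(U \#_R H)$. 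Composing the algebra isomorphism $j|_{A_{par}(H)} : A_{par}(H) \to j(A_{par}(H)) = A_{par}(U \#_R H)$ with the inverse of the isomorphism from \cref{th:smashconnected} then yields an algebra isomorphism that, by the computation just indicated, sends $\varepsilon_h$ to $\varepsilon_{1 \#_R h}$, i.e.\ it is $\alpha'$; its inverse sends $\varepsilon_{u \#_R h} = j(\varepsilon_{u \blacktriangleright h})$ back to $\varepsilon_{u \blacktriangleright h}$, i.e.\ it is $\alpha$. This simultaneously shows that $\alpha$ is well defined and that $\alpha, \alpha'$ are mutually inverse algebra isomorphisms.

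The right base algebra is handled in exactly the same way, replacing the first displayed identity by $\tilde{\varepsilon}_{u \#_R h} = \epsilon(u)\,(1_U \#_{\mathcal{T}} \tilde{\varepsilon}_h) = \epsilon(u)\,j(\tilde{\varepsilon}_h)$ and observing that $\tilde{\varepsilon}_{1_U \#_R h} = [1_U \#_R S(h_{(1)})][1_U \#_R h_{(2)}] = j(\tilde{\varepsilon}_h)$; one concludes that $\tilde{A}_{par}(U \#_R H)$ is carried onto $j(\tilde{A}_{par}(H))$, so that $\beta' : \tilde{\varepsilon}_h \mapsto \tilde{\varepsilon}_{1 \#_R h}$ is an algebra isomorphism with inverse $\beta : \tilde{\varepsilon}_{u \#_R h} \mapsto \epsilon(u)\tilde{\varepsilon}_h$. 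I do not expect any serious obstacle here: all the genuine computational substance already sits in the two displayed identities above the statement, and the only point requiring care — the well-definedness of $\alpha$ and $\beta$ on the generators of $A_{par}(U \#_R H)$ and $\tilde{A}_{par}(U \#_R H)$ — is precisely what is avoided by obtaining these maps as restrictions of $j^{-1}$ transported through \cref{th:smashconnected}, rather than constructing them directly from presentations of the base algebras.
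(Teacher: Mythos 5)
Your argument is correct, but it takes a genuinely different route from the paper's. The paper treats \(A_{par}(U \#_R H)\) through its presentation by generators and the relations \eqref{eq:Arelation1}--\eqref{eq:Arelation3}, and verifies by a direct (and fairly long) computation that the assignment \(\varepsilon_{u\#_R h} \mapsto \varepsilon_{u \blacktriangleright h}\) preserves them --- this is what the paper explicitly calls the nontrivial part. You avoid the presentation altogether: since the displayed identities preceding the statement already give \(\varepsilon_{u \#_R h} = 1_U \#_{\mathcal{T}} \varepsilon_{u\blacktriangleright h}\) inside \(U \#_{\mathcal{T}} H_{par} \cong (U\#_R H)_{par}\), and since \(x \mapsto 1_U \#_{\mathcal{T}} x\) is an injective algebra map, both base algebras become the \emph{same} subalgebra of the ambient algebra and \(\alpha, \alpha'\) are just the two restrictions of this identification, so well-definedness is automatic. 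Your auxiliary checks (\(S(1_U \#_R h) = 1_U \#_R S(h)\) by left normality, \(1_U \blacktriangleright h = h\), injectivity of the inclusion because the smash product is \(U \otimes H_{par}\) as a vector space) are all correct, and the same scheme does handle \(\beta, \beta'\). What your approach buys is brevity and a clean explanation of why no relation-checking is needed: the computational content was already spent on the displayed identities. The trade-off is that your argument is tied to the setting of \cref{th:smashconnected} (i.e.\ \(U = U_{par}\), so that \((U\#_R H)_{par}\) is identified with \(U \#_{\mathcal{T}} H_{par}\)), whereas the paper's direct verification that \(\alpha\) respects \eqref{eq:Arelation1}--\eqref{eq:Arelation3} uses only properties of \(R\) and \(R'\); since the proposition is stated in that setting anyway (the preceding displays presuppose it), this costs you nothing here.
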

\begin{proof}
	The nontrivial part is to see that \(\alpha\) and \(\beta\) are well-defined. We treat \(\alpha\), the proof for \(\beta\) is analogous. We need to show that relations \eqref{eq:Arelation1}, \eqref{eq:Arelation2} and \eqref{eq:Arelation3} are preserved. The first is immediate, and the second follows directly from the fact that \(R\) is a coalgebra map. For the third, let us show that
	\begin{align*}
        \alpha(\varepsilon_{u_{(1)} \#_R h_{(1)}}\varepsilon_{(u_{(2)} \#_R h_{(2)})(v \#_R k)}) &= \varepsilon_{u_{(1)} \blacktriangleright h_{(1)}} \varepsilon_{u_{(2)}v^R \blacktriangleright {h_{(2)}}^R k} \\ &= \varepsilon_{u_{(1)} v^R \blacktriangleright {h_{(1)}}^R k} \varepsilon_{u_{(2)} \blacktriangleright h_{(2)}} \\ &= \alpha(\varepsilon_{(u_{(1)} \#_R h_{(1)})(v \#_R k)} \varepsilon_{u_{(2)} \#_R h_{(1)}}).
	\end{align*}
	Indeed,
 \begingroup
    \allowdisplaybreaks
	\begin{align*}
		\varepsilon_{u_{(1)} \blacktriangleright h_{(1)}} \varepsilon_{u_{(2)}v^R \blacktriangleright {h_{(2)}}^R k} &\stackrel{\mathmakebox[\widthof{  =  }]{}}{=} \epsilon({u_{(1)}}^{\rho'}) \epsilon((u_{(2)}v^R)^{R'}) \varepsilon_{{h_{(1)}}^{\rho'}} \varepsilon_{({h_{(2)}}^Rk)^{R'}} \\
		&\stackrel{\mathmakebox[\widthof{  =  }]{\eqref{eq:leftmult}}}{=} \epsilon({u_{(1)}}^{\rho'}) \epsilon((u_{(2)}v^R)^{R'_1R'_2}) \varepsilon_{{h_{(1)}}^{\rho'}} \varepsilon_{{h_{(2)}}^{RR'_1}k^{R'_2}} \\
		&\stackrel{\mathmakebox[\widthof{  =  }]{\eqref{eq:rightmult}}}{=} \epsilon({u_{(1)}}^{\rho'}) \epsilon({u_{(2)}}^{r'_1 r'_2}v^{RR'_1 R'_2}) \varepsilon_{{h_{(1)}}^{\rho'}} \varepsilon_{{h_{(2)}}^{RR'_1 r'_1}k^{R'_2 r'_2}} \\
		&\stackrel{\mathmakebox[\widthof{  =  }]{}}{=} \epsilon({u_{(1)}}^{\rho'}) \epsilon({u_{(2)}}^{r'_1 r'_2}) \epsilon(v^{R'}) \varepsilon_{{h_{(1)}}^{\rho'}} \varepsilon_{{h_{(2)}}^{r'_1}k^{R'r'_2}} \\
		&\stackrel{\mathmakebox[\widthof{  =  }]{\eqref{eq:Rcoalg}}}{=} \epsilon({u^{\rho'}}_{(1)}) \epsilon({{u^{\rho'}}_{(2)}}^{r'}) \epsilon(v^{R'}) \varepsilon_{{h^{\rho'}}_{(1)}} \varepsilon_{{h^{\rho'}}_{(2)}k^{R'r'}} \\
		&\stackrel{\mathmakebox[\widthof{  =  }]{\eqref{eq:Arelation3}}}{=} \epsilon(u^{\rho'r'}) \epsilon(v^{R'}) \varepsilon_{{{h^{\rho'}}_{(1)}} k^{R' r'}} \varepsilon_{{h^{\rho'}}_{(2)}},
  \end{align*}
  \endgroup
  and
  \begin{align*}
		\varepsilon_{u_{(1)} v^R \blacktriangleright {h_{(1)}}^R k} \varepsilon_{u_{(2)} \blacktriangleright h_{(2)}} &\stackrel{\mathmakebox[\widthof{  =  }]{}}{=} \epsilon((a_{(1)}v^R)^{R'}) \epsilon({u_{(2)}}^{\rho'}) \varepsilon_{({h_{(1)}}^R k)^{R'}} \varepsilon_{{h_{(2)}}^{\rho'}} \\
		&\stackrel{\mathmakebox[\widthof{  =  }]{\eqref{eq:rightmult}}}{=} \epsilon({u_{(1)}}^{r'} v^{RR'}) \epsilon({u_{(2)}}^{\rho'}) \varepsilon_{({h_{(1)}}^R k)^{R'r'}} \varepsilon_{{h_{(2)}}^{\rho'}} \\
		&\stackrel{\mathmakebox[\widthof{  =  }]{\eqref{eq:leftmult}}}{=} \epsilon({u_{(1)}}^{r'_1 r'_2} v^{RR'_1 R'_2}) \epsilon({u_{(2)}}^{\rho'}) \varepsilon_{{h_{(1)}}^{RR'_1 r'_1} k^{R'_2 r'_2}} \varepsilon_{{h_{(2)}}^{\rho'}} \\
		&\stackrel{\mathmakebox[\widthof{  =  }]{\eqref{eq:Rcoalg}}}{=} \epsilon({{u^{\rho'}}_{(1)}}^{r'} v^{R'}) \epsilon({u^{\rho'}}_{(2)}) \varepsilon_{{{h^{\rho'}}_{(1)}} k^{R' r'}} \varepsilon_{{h^{\rho'}}_{(2)}} \\
		&\stackrel{\mathmakebox[\widthof{  =  }]{}}{=} \epsilon(u^{\rho'r'}) \epsilon(v^{R'}) \varepsilon_{{{h^{\rho'}}_{(1)}} k^{R' r'}} \varepsilon_{{h^{\rho'}}_{(2)}}. \qedhere
	\end{align*}
\end{proof}

\subsection{Partial representations of cocommutative Hopf algebras}

Suppose that \(k\) has characteristic 0 and let \(H\) be a cocommutative Hopf algebra. Let \(G\) be the group of grouplikes of \(H\), and \(\mathfrak{g}\) the vector space of its primitive elements. Then \(\mathfrak{g}\) is a Lie algebra, and the subalgebra generated by \(\mathfrak{g}\) in \(H\) is isomorphic to \(U(\mathfrak{g})\), the universal enveloping algebra of \(\mathfrak{g}\). The Cartier-Gabriel-Konstant-Milnor-Moore theorem states that
\[H \cong U(\mathfrak{g}) \#_R kG,\]
where \(R : kG \otimes U(\mathfrak{g}) \to U(\mathfrak{g})\) is given on generators by
\[R(g \otimes x) = gxg^{-1} \otimes g.\]
Since \(U(\mathfrak{g})\) is a connected Hopf algebra, we can apply \cref{th:smashconnected} and conclude that
\[H_{par} \cong U(\mathfrak{g}) \#_{\mathcal{T}} k_{par} G,\]
where 
\begin{align*}
    \mathcal{T} : k_{par} G \otimes U(\mathfrak{g}) &\to U(\mathfrak{g}) \otimes k_{par} G, \\ [g_1] \cdots [g_n] \otimes x &\mapsto g_1 \cdots g_n x g_n^{-1} \cdots g_1^{-1} \otimes [g_1] \cdots [g_n].
\end{align*}
Suppose now that \(H\) has only finitely many grouplikes, so that \(G\) is a finite group. Then by \cite[Corollary 2.7]{DEP},  \(k_{par} G\) is isomorphic to the groupoid algebra \(k\Gamma(G)\), where 
\[\Gamma(G) = \{(A, g) \in \mathcal{P}(G) \times G \mid  1, g^{-1} \in A\}\]
with multiplication \((A, g)(B, h) = (B, gh)\) whenever \(A = hB\) and not defined else. Explicitly, the isomorphism is given by
\begin{align}
	k\Gamma(G) &\longleftrightarrow  k_{par} G \nonumber \\ 
	(A, g) &\longmapsto	[g] \prod_{h \in A} [h][h^{-1}] \prod_{h \notin A} (1 - [h][h^{-1}]), \label{eq:isogroupoid}\\
	\sum_{A \ni 1, g^{-1}} (A, g) &\longmapsfrom [g]. \nonumber
\end{align}
Using this, we can describe \(H_{par}\) as a weak Hopf algebra coming from a Hopf category \(\mathcal{H}\). We refer to \cite{weakHopf} and \cite{HopfCat} for the definitions of a weak Hopf algebra and a Hopf category. We define a Hopf category \(\mathcal{H}\) with objects the subsets \(A\) of \(G\) containing 1 (as for the groupoid \(\Gamma(G)\)). For two of these subsets \(A\) and \(B\), put
\begin{gather*}
	K_{A, B} = \{g \in G \mid gA = B\}, \\
	\mathsf{Hom}_{\mathcal{H}}(A, B) = U(\mathfrak{g}) \otimes kK_{A, B}.
\end{gather*}
Then \(\mathsf{Hom}_{\mathcal{H}}(A, B)\) is a coalgebra, and the composition maps
\begin{equation}
    \label{eq:comp}
    \mathsf{Hom}_{\mathcal{H}}(B, C) \otimes \mathsf{Hom}_{\mathcal{H}}(A, B) \to \mathsf{Hom}_{\mathcal{H}}(A, C) : (x \otimes g) \circ (x' \otimes g') = (xgx'g^{-1} \otimes gg')
\end{equation}
are coalgebra maps. The antipode on \(\mathcal{H}\) is given by
\[S_{A, B} : \mathsf{Hom}_{\mathcal{H}}(A, B) \to \mathsf{Hom}_{\mathcal{H}}(B, A) : x \otimes g \mapsto S(x) \otimes g^{-1}.\]
The weak Hopf algebra associated to \(\mathcal{H}\) is
\begin{equation}
	\label{eq:weakHopf}
	W = \bigoplus_{1 \in A, B \subseteq G} \mathsf{Hom}_{\mathcal{H}}(A, B) = U(\mathfrak{g}) \otimes \bigoplus_{1 \in A, B \subseteq G} K_{A, B}.
\end{equation}
Note however that the algebra structure on the right hand side is not the tensor product algebra structure, but is obtained from the composition rule \eqref{eq:comp}. This way \(W\) is the smash product of \(U(\mathfrak{g})\) and \(\bigoplus_{1 \in A, B \subseteq G} K_{A, B}\). This last space can be identified with the groupoid algebra \(k\Gamma(G),\) so we can combine \eqref{eq:isogroupoid} with \eqref{eq:weakHopf} and obtain an isomorphism of weak Hopf algebras \(H_{par} \cong W\). We remark that this weak Hopf algebra is cocommutative, and when considered as a Hopf algebroid, its base algebra is commutative.

\subsection{Further examples}
\subsubsection{Exact factorizations of groups}
Let \(G\) be a group with unit \(e\) and \(G = ML\) an \textit{exact factorization}, i.\,e.\ \(M\) and \(L\) are subgroups of \(G\) such that every element of \(G\) can be written in a unique way as a product of an element of \(M\) and an element of \(L\). Note that also \(G = G^{-1} = L^{-1} M^{-1} = LM\) is an exact factorization. 

The exact factorization induces a local braiding \(R : kL \otimes kM \to kM \otimes kL : l \otimes m \mapsto m' \otimes l'\) where \(lm = m'l'\). It is easy to check that \(R\) is normal, multiplicative and a coalgebra map, so we get an \(R\)-smash product \(kM \#_R kL\) which is isomorphic to \(kG\) as a Hopf algebra, via the multiplication map.

By \cref{th:smashglobalpartial}, partial representations of \(kG\) that are global on the subgroup \(kM\) are equivalent to representations of the algebra \(kM \#_{\mathcal{T}} k_{par}L\). Suppose now that \(L\) is finite. In that case  \(k_{par} L\) is isomorphic to the groupoid algebra \(k\Gamma(L)\) as in \eqref{eq:isogroupoid}.

In \cite{DHSV}, partial representations of \(G\) which are global on a subgroup \(M\) were studied. It was shown that they correspond to representations of the groupoid algebra \(k\Gamma_{M}(G)\), where \(\Gamma_M(G)\) is the groupoid
\[\Gamma_M(G) = \{(A, g) \in \mathcal{P}(G/M) \times G \mid M, g^{-1}M \in A\}.\] 
In fact, \(k\Gamma_{M}(G)\) is isomorphic to \(kM \#_{\mathcal{T}} k_{par}L\). 
The exact factorization \(G = LM\) induces a bijection \(\varphi : L \to G/M : l \mapsto lM\). One can check that
\[\theta : kM \#_{\mathcal{T}} k_{par}L \to k\Gamma_M(L) : m \#_{\mathcal{T}} [l] \mapsto \sum_{A \ni e, l^{-1}} (\varphi(A), ml) \]
defines an isomorphism of algebras.

\subsubsection{Graded partial modules}

Let \(G, F\) be finite groups and \(\triangleright : F \times G \to G\) an action of \(F\) on \(G\) by automorphisms. Then \(kF\) acts on \(kG^*\) via
\[f \cdot p_g = p_{f \triangleright g},\]
where \(p_g\) is the dual basis vector to \(g \in G\).
This induces a twist map (cf.\ \cref{ex:hopfactionR})
\[R : kF \otimes kG^* \to kG^* \otimes kF : f \otimes p_g \mapsto p_{f \triangleright g} \otimes f.\]
This map is normal, multiplicative and a coalgebra map, so the smash product \(kG^* \#_R kF\) is a Hopf algebra. Modules over this Hopf algebra are equivalent to \(G\)-graded \(F\)-representations, i.\,e.\ \(G\)-graded vector spaces \(V = \bigoplus_{g \in G} V_g\) together with a group morphism \(\rho : F \to \mathsf{GL}(V)\) such that \(\rho(f)(V_g) \subseteq V_{f \triangleright g}\) for all \(f \in F, g \in G\). Indeed, a \(G\)-graded vector space is a \(kG\)-comodule, which is equivalent to a \(kG^*\)-module. If \(v = \sum_{g \in G} v_g,\) where \(v_g \in V_g\) for each \(g \in G,\) then the \(kG^*\)-module structure is given by \(p_g \cdot v = v_g\).

The notion of graded module can be extended to the partial world in the following way:
\begin{definition}
	A \(G\)-graded partial \(F\)-representation is a vector space \(V = \bigoplus_{g \in G} V_g\) together with a partial representation \(\eta : F \to \mathsf{End}_k(V)\) such that \(\eta(f)(V_g) \subseteq V_{f \triangleright g}\) for all \(f \in F, g \in G\).
\end{definition}

As in the global case, the \(G\)-grading on \(V\) induces a \(kG^*\)-module structure, in other words, a representation
\[\alpha : kG^* \to \End_k(V) : p_g \mapsto (v \mapsto v_g).\]
Now \(\eta(f)\alpha(p_g) (v) = \eta(f) (v_g) \in V_{f \triangleright g}\). So for any \(h \in G,\) \(\alpha(p_{f \triangleright g}) \eta(f)(v_h) = \delta_{gh} \eta(f)(v_h)\), because \(f \triangleright g \neq f \triangleright h\) if \(g \neq h\).  This implies that
\[\eta(f) \alpha(p_g)(v) =  \eta(f) (v_g) = \alpha(p_{f \triangleright g}) \eta(f)(v_g) = \sum_{h \in G} \alpha(p_{f \triangleright g}) \eta(f)(v_h) = \alpha(p_{f \triangleright g}) \eta(f)(v).\]
Hence \(\eta(f) \alpha(p_g) = \alpha({p_g}^R) \eta(f^R)\) for all \(f \in F, g \in G,\) so \cref{th:smashglobalpartial} can be applied to obtain the following corollary.

\begin{corollary}
	Let \(F\) and \(G\) be finite groups and let \(\triangleright\) be a left action of \(F\) on \(G\) by automorphisms. Then the category of \(G\)-graded partial \(F\)-modules is equivalent to the category of left \(kG^* \#_{\mathcal{T}} k_{par} F\)-modules, where 
	\[\mathcal{T} : k_{par} F \otimes kG^* \to kG^* \otimes k_{par} F : [f_1] \cdots [f_n] \otimes p_g \mapsto p_{(f_1 \cdots f_n) \triangleright g} \otimes [f_1] \cdots [f_n].\]
\end{corollary}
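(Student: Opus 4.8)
The plan is to recognize a $G$-graded partial $F$-module as nothing more than an object of the category $\mathcal{Q}_{kG^*, kF}^R$ realized on an endomorphism algebra, and then to apply \cref{th:smashglobalpartial} in its partial-module formulation. First I would record the elementary dictionary: since $G$ is finite, a $kG^*$-module structure on a vector space $V$ is exactly a $G$-grading $V = \bigoplus_{g \in G} V_g$, with the idempotent $p_g$ acting as the projection onto $V_g$; hence an algebra map $\alpha : kG^* \to \End_k(V)$ is the same datum as a $G$-grading on $V$, and a morphism of $kG^*$-modules is the same as a degree-preserving linear map. Under this dictionary, the data $(V = \bigoplus_g V_g,\ \eta : F \to \End_k(V))$ of a $G$-graded partial $F$-module corresponds to a pair $(\alpha, \eta)$ with $\alpha$ the algebra map encoding the grading and $\eta$ a partial representation of $H = kF$.

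Next I would check that the two compatibility conditions agree, i.e.\ that $\eta(f)(V_g) \subseteq V_{f \triangleright g}$ for all $f \in F,\ g \in G$ is equivalent to $\eta(f)\alpha(p_g) = \alpha({p_g}^R)\eta(f^R)$ for all $f, g$, where $R(f \otimes p_g) = p_{f \triangleright g} \otimes f$. The forward implication is precisely the computation carried out just before the statement. For the converse, one applies $\eta(f)\alpha(p_g) = \alpha(p_{f \triangleright g})\eta(f)$ to a homogeneous vector $v \in V_g$: since $\alpha(p_g)(v) = v$, we get $\eta(f)(v) = \alpha(p_{f \triangleright g})(\eta(f)(v)) = (\eta(f)(v))_{f \triangleright g}$, so $\eta(f)(v) \in V_{f \triangleright g}$. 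This shows that the category of $G$-graded partial $F$-modules is isomorphic to the full subcategory of $\mathcal{Q}_{kG^*, kF}^R$ whose objects are pairs living on algebras of the form $\End_k(V)$, with morphisms the linear maps intertwining both structures.

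Finally I would invoke \cref{th:smashglobalpartial} (in the partial-module version noted in the remark after it): $\mathcal{Q}_{kG^*, kF}^R$ is equivalent to $\Rep_{kG^* \#_{\mathcal{T}} (kF)_{par}} = \Rep_{kG^* \#_{\mathcal{T}} k_{par} F}$, with $\mathcal{T}$ obtained from $R$ through the formula $\mathcal{T}([h_1] \cdots [h_n] \otimes u) = u^{R_1 \cdots R_n} \otimes [h_1^{R_n}] \cdots [h_n^{R_1}]$. Specializing $h_i = f_i$, $u = p_g$ and using that each application of $R$ sends $f \otimes p_h \mapsto p_{f \triangleright h} \otimes f$ (leaving the $F$-entry untouched, acting on the index by $\triangleright$), an easy induction on $n$, using that $\triangleright$ is a left action so $f_i \triangleright (f_{i+1} \triangleright \cdots) = (f_i f_{i+1} \cdots) \triangleright \cdots$, gives $\mathcal{T}([f_1] \cdots [f_n] \otimes p_g) = p_{(f_1 \cdots f_n) \triangleright g} \otimes [f_1] \cdots [f_n]$, which is the formula in the statement. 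Restricting the isomorphism of categories to endomorphism algebras then yields the claimed equivalence. The only step that needs any care is the translation of the compatibility conditions in the second paragraph, and even that reduces to evaluating operators on homogeneous vectors; the rest is bookkeeping together with the citation of \cref{th:smashglobalpartial}.
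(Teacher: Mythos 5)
Your proposal is correct and follows essentially the same route as the paper: translate the grading into an algebra map $\alpha : kG^* \to \End_k(V)$, verify the commutation relation $\eta(f)\alpha(p_g) = \alpha(p_g^{R})\eta(f^{R})$, and invoke \cref{th:smashglobalpartial} in its partial-module formulation. You are in fact slightly more complete than the paper, which only writes out the forward direction of the compatibility check and leaves the converse and the explicit computation of $\mathcal{T}$ implicit.
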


Suppose now that \(G\) is a \(p\)-group and that \(k\) has characteristic \(p\). Then we saw in \cref{ex:connected} that \(kG^*\) is a connected Hopf algebra, so by \cref{cor:connected}, \((kG^*)_{par} = kG^*\). From \cref{th:smashconnected} it follows that in this case
\[(kG^* \#_R kF)_{par} = kG^* \#_{\mathcal{T}} k_{par} F.\]
For more concrete examples, we refer to \cite{tiagothesis}.
	
	\subsubsection{Drinfel'd double}

 Let \(H\) be a finite-dimensional Hopf algebra. As is done in \cite[Example 7.2.5]{majid}, the Drinfel'd double (or quantum double) \(D(H)\) can be described as the smash product \((H^*)^{op} \#_R H,\) where
 \[R : H \otimes H^* \to H^* \otimes H : h \otimes \psi \mapsto \psi_{(1)}(S(h_{(1)})) \psi_{(3)}(h_{(3)})\ \psi_{(2)} \otimes h_{(2)}.\]
 This map induces a normal and multiplicative twist map
 \[\mathcal{R} : H_{par} \otimes (H^*)^{op}_{par} \to (H^*)^{op}_{par} \to H_{par}\]
 by \cref{le:mathcalR}. By \cref{th:isosP}, a right module over \((H^*)^{op}_{par} \#_{\mathcal{R}} H_{par}\) can be interpreted as a right partial \(H\)-module, which is at the same time a right partial \((H^*)^{op}\)-module (i.\,e.\ a left partial \(H^*\)-module) compatible through \(R\). By \cite[Theorem 4.14]{ABCQVparcorep}, left partial \(H^*\)-modules are equivalent to right partial \(H\)-comodules. Recall also that right \(D(H)\)-modules are equivalent to right-right Yetter-Drinfel'd modules over \(H\). So a right \((H^*)^{op}_{par} \#_{\mathcal{R}} H_{par}\)-module is a right partial \(H\)-module \(M\), which is at the same time a right partial \(H\)-comodule satisfying the Yetter-Drinfel'd compatibility condition
 \[(m \cdot h)^{[0]} \otimes (m \cdot h)^{[1]} = m^{[0]} \cdot h_{(2)} \otimes S(h_{(1)}) m^{[1]} h_{(3)}.\]
 for all \(h \in H, m \in M\). This is exactly \eqref{eq:split2} interpreted in terms of the partial action and coaction of \(H\) on \(M\).

\subsection*{Acknowledgements} The authors thank Marcelo Alves and Joost Vercruysse for the interesting discussions, useful remarks and great advice. WH would like to thank TF and AN for the warm welcome during his visit to Curitiba in February 2024.

\end{document}